\newtheorem{lemma1}[equation]{}
\newenvironment{lemma}{\begin{lemma1}{\bf Lemma.}}{\end{lemma1}}
\newenvironment{example}{\begin{lemma1}{\bf Example.}\rm}{\end{lemma1}}
\newenvironment{theorem}{\begin{lemma1}{\bf Theorem.}}{\end{lemma1}}
\newenvironment{proposition}{\begin{lemma1}{\bf Proposition.}}{\end{lemma1}}
\newenvironment{corollary}{\begin{lemma1}{\bf Corollary.}}{\end{lemma1}}
\newenvironment{remark}{\begin{lemma1}{\bf Remark.}\rm}{\end{lemma1}}
\newenvironment{question}{\begin{lemma1}{\bf Question.}}{\end{lemma1}}
\newenvironment{remark*}{{\bf Remark.}}{}
\newenvironment{example*}{{\bf Example.}}{}
\newcommand{\Q}{\ensuremath{\mathbb{Q}}}
\newcommand{\PP}{\ensuremath{\mathbb{P}}}
\newcommand{\holom}[3]{\ensuremath{#1:#2  \rightarrow #3}}
\newcommand{\fibre}[2]{\ensuremath{#1^{-1} (#2)}}
\newcommand\sO{{\mathcal O}}
\DeclareMathOperator*{\sing}{sing}
\title{Anticanonical divisors and curve classes   \\ on Fano manifolds}
\date{September 14, 2010}
\author{Andreas H\"oring}
\address{Andreas H\"oring, Universit\'{e} Pierre et Marie Curie and Albert-Ludwig Universit\"at Freiburg}
\curraddr{Mathematisches Institut, Albert-Ludwigs-Universit\"at
  Freiburg, Eckerstra{\ss}e 1, 79104 Freiburg im Breisgau, Germany}
\email{hoering@math.jussieu.fr}
\author{Claire Voisin}
\address{Claire Voisin, CNRS, Universit\'{e} Pierre et Marie Curie, Institut de Math\'{e}matiques de Jussieu,
TGA Case 247, 4 place Jussieu, 75005 Paris, France}
\email{voisin@math.jussieu.fr}
\begin{document}

\maketitle

\vspace{-0.5cm}
\begin{center}
{\em \small
To Eckart Viehweg
}
\end{center}

\begin{abstract}
It is well known that the Hodge conjecture with rational coefficients holds for degree
$2n-2$ classes on complex projective $n$-folds.
In this paper we study the more precise question if on a rationally connected
complex projective $n$-fold the {\it integral} Hodge classes of degree $2n-2$ are generated over $\mathbb Z$
by classes of curves.
We combine techniques from the theory of singularities of pairs on the one hand  and
infinitesimal variation of Hodge structures on the other hand to give
an affirmative answer to this question for a large class
of manifolds including Fano fourfolds. In the last case, one step in the proof
is the following result of independent interest: There exist anticanonical divisors  with isolated canonical singularities
on a smooth Fano fourfold.

\end{abstract}

\vspace{-1ex}
\section{Introduction}
The Hodge conjecture states that the space $Hdg^{2i}(X)$
of rational Hodge classes on a smooth projective complex variety
$X$ is generated over $\mathbb{Q}$ by classes of algebraic cycles of codimension $i$ on $X$.

This conjecture is  known to be wrong for integral coefficients instead of rational coefficients
(\cite{ah}, \cite{kollar}). We will focus in this paper
on Hodge classes of degree $2n-2,\,n=dim\,X$ (or ``curve classes'') for which the Hodge conjecture is known
to hold with rational coefficients. As remarked in \cite{voisinsoule}, this case
is particularly interesting as it leads to a birational invariant of $X$, namely
the finite group $$Z^{2n-2}(X):=Hdg^{2n-2}(X,\mathbb{Z})/\langle[Z],\,codim\,Z=n-1\rangle,$$ where
$$Hdg^{2n-2}(X,\mathbb{Z}):=\{\alpha\in H^{2n-2}(X,\mathbb{Z}),\,\alpha_\mathbb{C}\in H^{n-1,n-1}(X)\}.$$
 Koll\'ar's counterexamples show that this group can be non trivial starting from
 $n=3$ and are  strikingly simple:
He considers hypersurfaces $X$ of degree $d$ in $\mathbb{P}^4$. The cohomology
group $H^4(X,\mathbb{Z})$ is then isomorphic to $\mathbb{Z}$, with generator $\alpha$ of degree $1$ with respect to the hyperplane class.
\begin{theorem}(Koll\'ar, \cite{kollar}) Assume that for some  integer
$p$ coprime to $6$, $p^3$ divides $d$. Then for very general such $X$, any curve $C\subset X$ has
degree divisible by $p$. Hence the class $\alpha$ is not algebraic,  (that is, is not the class of an algebraic cycle).
\end{theorem}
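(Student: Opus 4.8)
The plan is to exploit the fact that the cohomology and Hodge theory of $X$ are essentially forced by those of the ambient $\PP^4$, so that the only possible obstruction to algebraicity of $\alpha$ is a congruence on degrees of curves; one then produces that congruence by a degeneration/specialization argument. First I would recall the Lefschetz hyperplane theorem, which gives $H^4(X,\Z)\cong\Z$ generated by a class $\alpha$ with $\alpha\cdot h = 1$ (where $h$ is the hyperplane class), so that for any curve $C\subset X$ one has $[C] = (\deg C)\,\alpha$ in $H^4(X,\Z)$; hence $\alpha$ is algebraic if and only if some curve on $X$ has degree prime to $p$, and more precisely the subgroup of $H^4(X,\Z)$ generated by curve classes is $e\Z\,\alpha$ where $e=\gcd\{\deg C : C\subset X\}$. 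So the entire statement reduces to showing that for very general $X$ of degree $d$ with $p^3\mid d$, every irreducible curve $C\subset X$ has $p\mid\deg C$.

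Next I would set up the standard ``very general'' argument: consider the universal family $\mathcal{X}\to B$ of smooth degree-$d$ hypersurfaces over a Zariski-open $B\subset\PP H^0(\PP^4,\O(d))$, and the relative Hilbert scheme / relative Chow variety parametrizing curves of a fixed degree $\delta$ in the fibers. This has countably many components, each proper over $B$; a component either dominates $B$ or it does not. If no component of degree $\delta$ with $p\nmid\delta$ dominates $B$, then for $X$ outside a countable union of proper subvarieties — i.e. for very general $X$ — every curve on $X$ has degree divisible by $p$. So it suffices to rule out the existence of a dominating family of curves of degree $\delta$ with $p\nmid\delta$; equivalently, to show that a \emph{general} $X$ carries no such curve, or even just some \emph{special} smooth $X_0$ of degree $d$ carries no irreducible curve of degree prime to $p$, provided one also controls limits.

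The heart of the matter — and the main obstacle — is the degree congruence, and here is where the hypothesis $p^3\mid d$, $\gcd(p,6)=1$, enters. The idea is to degenerate $X$ to (a resolution of) a cyclic cover construction: write $d = p^3 d'$ and consider hypersurfaces of the form $F = G^{p} + \cdots$ or, better, specialize $X$ into a $\Z/p$-cover of $\PP^4$ branched over a smooth hypersurface of degree $d/p$; after resolving, one gets a model $Y$ admitting a finite morphism $\pi\colon Y\to\PP^4$ of degree $p$ together with extra structure. On such a $Y$, intersection with the ramification divisor and a careful local analysis of how an irreducible curve meets the branch locus forces $\deg C \equiv 0 \pmod p$: the point is that $\pi^*\O_{\PP^4}(1)$ pulls back the hyperplane class, $\pi$ has degree $p$, and the line bundle defining the cover contributes the extra two factors of $p$ after two further such covers (hence $p^3$, and the coprimality to $6=2\cdot3$ ensures these cyclic covers can be iterated and resolved without introducing curves of small degree through the exceptional loci). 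One then checks that this congruence is preserved under specialization — a curve of degree prime to $p$ on the general fiber would limit to a $1$-cycle of the same degree on $Y$, i.e. to an effective combination of curves at least one of which has degree prime to $p$, contradicting the congruence on $Y$ — which combined with the Hilbert-scheme argument above yields the claim for very general $X$. Finally, since $e = \gcd\{\deg C\}$ is then divisible by $p>1$, the generator $\alpha$ is not a $\Z$-combination of curve classes, so $\alpha$ is not algebraic, proving $Z^4(X)\neq 0$.
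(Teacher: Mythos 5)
The paper states this result as a citation to \cite{kollar} and gives no proof, so your attempt must be measured against Koll\'ar's argument. Your first two steps are correct and standard: the Lefschetz theorem reduces everything to showing $p\mid \deg C$ for every curve $C$ on a very general $X$, and the countability argument on the relative Hilbert scheme (plus specialization of a curve of degree prime to $p$ to an effective $1$-cycle of the same degree on a special fibre) correctly reduces this to exhibiting a \emph{single} degree-$d$ hypersurface $X_0\subset\PP^4$, possibly singular, on which every curve has degree divisible by $p$.

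The gap is in the construction of $X_0$, which is the heart of the proof, and your sketch would not work as stated. A degree-$p$ cyclic cover of $\PP^4$ branched along a hypersurface is a \emph{fourfold} living naturally in a weighted projective space, not a degree-$d$ hypersurface in $\PP^4$; your map $\pi\colon Y\to\PP^4$ points the wrong way and has the wrong dimensions. Moreover ``intersection with the ramification divisor'' does not force any congruence on degrees of curves: the ramification divisor of such a cover maps isomorphically to the branch divisor and carries curves of every degree. Koll\'ar's actual construction reverses the arrow: take a smooth threefold $Y$ with a line bundle $M$ such that $L:=M^{\otimes p}$ is very ample and $L^3=d$ (this is exactly where $p^3\mid d$ enters, e.g.\ $Y\subset\PP^4$ a smooth hypersurface of degree $s=d/p^3$ and $M=\O_Y(1)$, so $L^3=p^3M^3=d$ --- the cube comes from computing a degree in dimension $3$, not from iterating covers), and let $\phi\colon Y\to\PP^4$ be a generic projection, which is finite and birational onto its image $X_0$, a hypersurface of degree $d$. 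For an irreducible curve $C\subset X_0$, choose a component $C'$ of $\phi^{-1}(C)$ dominating $C$ with covering degree $k$; the projection formula gives $k\deg C=L\cdot C'=p\,(M\cdot C')$. Multiple-point theory for generic projections of a threefold to $\PP^4$ shows that the locus of points with $\geq 4$ preimages is finite, so no curve lies in it and $k\le 3$; since $\gcd(p,6)=1$ we get $\gcd(p,k)=1$ and hence $p\mid\deg C$. This is the true role of the hypothesis ``coprime to $6$'' --- it kills the possible covering degrees $k\in\{2,3\}$ --- not the resolution of iterated cyclic covers that you invoke. Without this (or an equivalent) mechanism your argument has no source for the divisibility statement.
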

Koll\'ar's examples are hypersurfaces of general type (and in fact, hypersurfaces
with negative or trivial canonical class contain a line, whose cohomology class
is the generator $\alpha$). It was proved more generally in \cite{voisinuniruled} that the Kodaira dimension
plays an important role in the study of the group $Z^4(X)$, when $dim\,X=3$:
\begin{theorem} (Voisin 2006) \label{theoremvoisin} Let $X$ be a smooth projective threefold. Assume that $X$ is
either uniruled or Calabi-Yau. Then the Hodge conjecture holds for integral Hodge classes on $X$, that is, the
group $Z^4(X)$ is trivial.
\end{theorem}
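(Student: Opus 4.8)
My plan rests on two ingredients: first, that $Z^4$ is a birational invariant of smooth projective threefolds, as recalled above following \cite{voisinsoule}; and second, the elementary fact that on a smooth projective \emph{surface} every integral Hodge class of degree $2$ is algebraic, by the Lefschetz $(1,1)$ theorem. The strategy will therefore be uniform: choose a convenient birational model of $X$ together with a family of surfaces inside it, restrict a given $\alpha\in Hdg^4(X,\mathbb Z)$ to a member $j\colon S\hookrightarrow X$, make the restriction algebraic on $S$, and push it back to $X$. The two sources of error are the kernel and the failure of \emph{integral} surjectivity of the Gysin map $j_*\colon H^2(S,\mathbb Z)\to H^4(X,\mathbb Z)$, and controlling them is the whole point.

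In the uniruled case I would run the minimal model program and, using birational invariance, replace $X$ by a Mori fibre space $\pi\colon X\to B$. If $\dim B=2$ then $\pi$ is a conic bundle: $\pi_*\colon H^4(X,\mathbb Z)\to H^2(B,\mathbb Z)$ is a morphism of Hodge structures, so $\pi_*\alpha$ automatically lands in $\NS{B}$ and is algebraic; it lifts to a curve in $X$ because the conic bundle over the curve representing $\pi_*\alpha$ acquires a section over the function field of that curve by Tsen's theorem; and a delicate-but-elementary analysis of the Leray spectral sequence and of the reducible fibres shows that $\ker\bigl(\pi_*\colon H^4(X,\mathbb Z)\to H^2(B,\mathbb Z)\bigr)$ is generated by the fibre class and by components of degenerate fibres, hence by algebraic classes. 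If $\dim B=1$ then $\pi$ is a del Pezzo fibration and the same scheme applies, reducing first to vertical classes by means of a multisection and then identifying the vertical part of $H^4(X,\mathbb Z)$ with components of fibres together with relative divisor classes, all algebraic (here one uses that del Pezzo surfaces have no transcendental $H^2$). The remaining Mori fibre space, $\dim B=0$, is a Fano threefold, treated below alongside the Calabi--Yau case.

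For $X$ Fano, and also for $X$ Calabi--Yau, one has $h^{2,0}(X)=0$, so that $H^2(X,\mathbb Z)$ and $H^4(X,\mathbb Z)$ consist entirely of Hodge classes. I would fix a very ample family of surfaces $\{S_t\}_{t\in T}$ on $X$ --- for Fano one may take $|-K_X|$, whose general member is a smooth K3 surface by the classical existence result; it is the analogue of that existence statement in dimension four which is the result of independent interest advertised in the abstract. For $\alpha\in H^4(X,\mathbb Z)$ and any smooth $S_t$, the Lefschetz hyperplane theorem makes $j_{t*}\colon H^2(S_t,\mathbb Z)\to H^4(X,\mathbb Z)$ surjective, so $\alpha=j_{t*}\mu_t$ for some $\mu_t\in H^2(S_t,\mathbb Z)$, unique modulo $\ker j_{t*}$, a subgroup containing the vanishing cohomology of the family. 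Then $\alpha$ is a curve class as soon as, for \emph{some} member, the lift $\mu_t$ can be chosen inside $\NS{S_t}$; and, forming $\mathbb Z$-linear combinations of finitely many members, it suffices to know that the Gysin images of the $\NS{S_t}$ generate $H^4(X,\mathbb Z)$ as $t$ varies.

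This last statement is the heart of the matter, and the step I expect to be the main obstacle. Over $\mathbb Q$ it is immediate, since hard Lefschetz already gives $j_{t*}\bigl(j_t^*H^2(X,\mathbb Q)\bigr)=H^4(X,\mathbb Q)$; but the integral version fails for general threefolds, as Koll\'ar's examples in \cite{kollar} show, and must be forced by an infinitesimal variation of Hodge structure argument in the spirit of the density of Noether--Lefschetz loci. Concretely, one views the affine family of admissible lifts $\mu_t+\ker j_{t*}$ as a torsor under a sub-variation of Hodge structure with large monodromy, and, because the chosen family of surfaces is very ample, the infinitesimal period map is non-degenerate enough to guarantee that the Hodge locus of this torsor is non-empty --- i.e.\ that on some special member $S_t$ the required lift of $\alpha$ becomes of type $(1,1)$, hence algebraic. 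It is precisely here that the hypotheses are used to rule out the transcendental obstruction present in the general-type case: $h^{2,0}(X)=0$ in the Fano and Calabi--Yau cases, and, in the remaining uniruled cases, the conic-bundle and del Pezzo reductions sidestep it entirely. Putting the cases together yields $Z^4(X)=0$.
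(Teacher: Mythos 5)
First, a point of order: the paper does not prove this theorem --- it is quoted from \cite{voisinuniruled} as background --- but Section 3 of the paper reproduces and generalizes its technical engine (Proposition 1 of \cite{voisinuniruled}, which is Proposition \ref{secondprop} here in the smooth case), so a meaningful comparison is possible. Your overall architecture --- birational invariance of $Z^4$, reduction of the uniruled case to conic bundles, del Pezzo fibrations and Fano threefolds, and in the Fano/Calabi--Yau cases a family of surfaces with surjective Gysin maps on which integral classes are made algebraic by a Noether--Lefschetz density argument --- is essentially the architecture of Voisin's proof. The problem is that the step you yourself call ``the heart of the matter'' is not a postponed verification: it \emph{is} the theorem, and your justification for it is wrong as stated. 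You assert that ``because the chosen family of surfaces is very ample, the infinitesimal period map is non-degenerate enough to guarantee that the Hodge locus of this torsor is non-empty.'' Very ampleness, large monodromy, and even $h^{2,0}(X)=0$ do not suffice: in Koll\'ar's own examples $X\subset\PP^4$ of degree $d$ with $p^3\mid d$, one has $h^{2,0}(X)=0$, the surfaces $\Sigma=X\cap Z$ with $Z\in|\O_{\PP^4}(m)|$, $m\gg0$, form a very ample family with large monodromy and surjective Gysin maps $H^2(\Sigma,\Z)\to H^4(X,\Z)\cong\Z$, and yet the Gysin images of the N\'eron--Severi groups lie in $p\Z$. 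So the heuristic you invoke would ``prove'' the integral Hodge conjecture for Koll\'ar's counterexamples. What actually does the work is the sign of $K_X$ (or of $K_Y$ for the ambient threefold carrying the surfaces), entering through Serre duality in the surjectivity of the map $\delta$ --- the analogue of Lemma \ref{surjH2}, which needs $H^0(\Sigma,\Omega^2_{Y\mid\Sigma})=0$, a vanishing that fails for Koll\'ar's hypersurfaces. Even granting that, the required nondegeneracy (that $\Phi$ is a submersion somewhere, i.e.\ by Lemma \ref{submersionhodge} that a generic quadric $q_\lambda$ on $H^0(\Sigma,K_\Sigma)$ is nonsingular) is a genuinely involved multi-step argument: degeneration to nodal discriminant surfaces to bound the base locus of the system of quadrics, Fermat-type members to bound the corank loci, and Green's theorem on linear systems of high degree (Lemmas \ref{lebaselocus}, \ref{rangcn2}, \ref{lelast}). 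None of this appears in your sketch, so the central step is a genuine gap.

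Two secondary remarks. The output of the threefold MMP has terminal singularities, so ``replace $X$ by a Mori fibre space'' needs justification: $Z^4$ is a birational invariant of \emph{smooth} projective varieties, and you must either pass to a smooth model dominating the fibre space (unproblematic for conic bundles, where standard smooth models exist) or confront the singularities directly; for the Fano endpoint this is not cosmetic --- controlling singularities of (anti)canonical divisors is exactly what occupies Section 2 of the present paper in higher dimension. On the positive side, your treatment of the conic-bundle and del Pezzo cases (push forward to the base, Lefschetz $(1,1)$ there, Tsen's theorem to lift, components of degenerate fibres to generate the kernel) and your choice of $|-K_X|$ for Fano threefolds (smooth K3 members by Shokurov's theorem, Theorem \ref{theoremshokurov} here) are sound and close to the actual proof; in the K3 case the needed nondegeneracy of $\overline{\nabla}(\lambda)$ is only surjectivity onto the one-dimensional $H^2(S,\O_S)$, which is comparatively easy.
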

The assumptions on the Kodaira dimension are ``essentially optimal''. (Note however that in this theorem, the assumption
in the Calabi-Yau case is
not actually an assumption on the Kodaira dimension, since we do not know whether,
 in the theorem above, it could be replaced by the weaker assumption that
the Kodaira dimension is $0$.)
Indeed, mimicking Starr's degeneration argument in \cite{starr}, one can show that a
very
general hypersurface
$X$ in $\mathbb{P}^1\times \mathbb{P}^3$ of bidegree $(3,4)$, which has Kodaira dimension $1$,
has a non trivial
group $Z^4(X)$: More precisely, Lefschetz hyperplane section theorem shows that the group
$H^4(X,\mathbb{Z})=H_2(X,\mathbb{Z})$ consists of Hodge classes and surjects via the Gysin map $pr_{1*}$ onto
$\mathbb{Z}=H_2(\mathbb{P}^1,\mathbb{Z})$. On the other hand, one can show that for very general
such
$X$,
any curve $C\subset X$ has even degree over $\mathbb{P}^1$ so that the images
via the Gysin map $pr_{1*}$ of algebraic classes do not generate
$H_2(\mathbb{P}^1,\mathbb{Z})$  over $\mathbb{Z}$.

In  dimension $4$ and higher, the uniruledness assumption does not say anything on the group
$Z^{2n-2}(X)$. Indeed, start from one of the Koll\'ar's examples $X_1$, and consider
the uniruled fourfold
$X:=X_1\times \mathbb{P}^1$. The degree $6$ integral Hodge class
$$\beta:=pr_1^*\alpha\cup pr_2^*[pt],$$
is not algebraic on $X$, since otherwise $pr_{2*}\beta=\alpha$ would be algebraic on
$X_1$.

The following question  was raised in \cite[section 2]{voisinsoule}:

 \begin{question} \label{qvs} Is the integral cohomology of degree $2n-2$ of a rationally connected manifold of dimension
 $n$ generated over $\mathbb{Z}$ by classes of curves?
 \end{question}

We will prove in this paper the following result, which answers positively this question for certain Fano manifolds, and also for certain projective manifolds $X$ for which $-K_X$ is only assumed to be $1$-ample. Recall from
\cite{sommese} that a vector bundle $\mathcal{E}$ on $X$ is said to be $1$-ample if
some multiple $\mathcal{O}_{\PP(E)}(l), \,l>0$ of the tautological line bundle
on $\mathbb{P}(\mathcal{E})$ is globally generated, and the fibers of the morphism
$\mathbb{P}(\mathcal{E})\rightarrow \mathbb{P}^N$ given by sections of
$\mathcal{O}_{\PP(E)}(l)$ are at most $1$-dimensional.

\begin{theorem}\label{theoclasses}  Let $X$ be a rationally connected, projective manifold of dimension $n \geq 4$.
Assume that there exists a $1$-ample vector bundle $\mathcal{E}$ of rank $n-3$ on $X$ such that $det\,\mathcal{E}=-K_X$ and
$\mathcal{E}$ has a transverse
section
whose zero set $Y$ has isolated canonical singularities.  Then   the group
$ H^{2n-2}(X,\mathbb{Z})$ is generated over $\mathbb{Z}$ by classes of curves (equivalently, $Z^{2n-2}(X)=0$).
\end{theorem}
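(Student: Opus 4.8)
The plan is to transfer the statement from $X$ down to the Calabi--Yau threefold $Y$, where a version of Theorem \ref{theoremvoisin} is available, using a Lefschetz-type theorem for the $1$-ample bundle $\mathcal{E}$ as the bridge.

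First I would dispose of the formal reductions. As $X$ is rationally connected, $H^i(X,\mathcal{O}_X)=0$ for $i>0$, so $H^{2n-2}(X,\mathbb{C})$ has no $(n,n-2)$ or $(n-2,n)$ part; hence every class in $H^{2n-2}(X,\mathbb{Z})$ is a Hodge class and, by Poincar\'e duality, the conclusion is equivalent to the assertion that $H_2(X,\mathbb{Z})$ is generated by classes of curves. Transversality makes $Y$ a reduced local complete intersection of pure dimension $n-(n-3)=3$, and adjunction with $\det\mathcal{E}=-K_X$ gives $\omega_Y=(\omega_X\otimes\det\mathcal{E})|_Y=\mathcal{O}_Y$; together with the isolated canonical (hence normal, and connected by Lefschetz) singularities this makes $Y$ an irreducible projective Calabi--Yau threefold with finitely many canonical singular points. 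I would also note $h^i(\mathcal{O}_Y)=0$ for $i=1,2$: indeed $H^1(Y,\mathbb{Z})\cong H^1(X,\mathbb{Z})=0$ by Lefschetz for $\mathcal{E}$ and $b_1(X)=0$, and $h^2(\mathcal{O}_Y)=h^1(\omega_Y)=h^1(\mathcal{O}_Y)=0$ by Serre duality.

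Next, the Lefschetz theorem for the $1$-ample bundle $\mathcal{E}$ of rank $n-3$ gives $H_i(X,Y;\mathbb{Z})=0$ for $i\le n-(n-3)-1=2$, so the Gysin map $j_*\colon H_2(Y,\mathbb{Z})\to H_2(X,\mathbb{Z})$ of $j\colon Y\hookrightarrow X$ is surjective. Since any curve $C\subset Y$ is a curve in $X$ with $j_*[C]=[C]$, it suffices to prove that $H_2(Y,\mathbb{Z})$ is generated by classes of curves contained in $Y$. I would then fix a resolution $\nu\colon\widetilde Y\to Y$; because the singularities of $Y$ are isolated and rational (canonical singularities are rational), $R^i\nu_*\mathcal{O}_{\widetilde Y}=0$ for $i>0$ and, the exceptional fibres having no first homology, $\nu_*\colon H_2(\widetilde Y,\mathbb{Z})\to H_2(Y,\mathbb{Z})$ is surjective and carries classes of curves to classes of curves or to $0$. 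As $h^{3,1}(\widetilde Y)=h^1(\mathcal{O}_{\widetilde Y})=h^1(\mathcal{O}_Y)=0$, every integral degree-$4$ class on $\widetilde Y$ is a Hodge class, so the whole statement reduces to showing $Z^4(\widetilde Y)=0$.

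The hard part is this last step. If $Y$ admitted a crepant resolution (for instance if $Y$ were smooth), $\widetilde Y$ would be a genuine Calabi--Yau threefold and $Z^4(\widetilde Y)=0$ would follow at once from Theorem \ref{theoremvoisin}. But isolated canonical singularities need not be crepant-resolvable --- e.g. $x^2+y^2+z^2+w^3=0$ is terminal --- so in general $K_{\widetilde Y}=\sum a_iE_i$ is a nonzero effective $\nu$-exceptional divisor, $\widetilde Y$ has Kodaira dimension $0$ without being Calabi--Yau, and Theorem \ref{theoremvoisin} does not apply verbatim. The crux is therefore to prove $Z^4(\widetilde Y)=0$ for such a $\widetilde Y$ by re-running the infinitesimal variation of Hodge structure argument underlying the Calabi--Yau case of Theorem \ref{theoremvoisin} (the argument of \cite{voisinuniruled}): $\widetilde Y$ has the Hodge numbers of a Calabi--Yau threefold ($h^{1,0}=h^{2,0}=0$, $h^{3,0}=1$), and the canonical-singularity assumption on $Y$ is precisely what ensures that the Hodge-theoretic input of that argument survives on $\widetilde Y$ --- equivalently, one runs it with the pure weight-$3$ Hodge structure on $H^3(Y)$ itself. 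Granting $Z^4(\widetilde Y)=0$, pushing forward along $\nu$ and then along $j$ shows that $H_2(X,\mathbb{Z})$ is generated by classes of curves, completing the proof. I expect this adaptation of Voisin's argument to the mildly singular $Y$ to be the main technical obstacle.
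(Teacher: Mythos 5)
Your overall architecture is sound and close in spirit to the paper's: reduce to a question about the Calabi--Yau threefold $Y$ via a Sommese--Lefschetz theorem for the $1$-ample bundle $\mathcal{E}$, and then run an infinitesimal-variation-of-Hodge-structure argument in the style of \cite{voisinuniruled} adapted to the mildly singular $Y$. But the proposal stops exactly where the proof begins: the assertion $Z^4(\widetilde Y)=0$ is the entire content of the theorem, and you yourself flag it as ``the main technical obstacle'' without supplying an argument. In the paper this content is isolated as Proposition \ref{secondprop} (for a smooth surface $\Sigma\in|nH|$ contained in $Y_{reg}$, the lattice $H^2(\Sigma,\mathbb{Z})$ is generated by classes becoming algebraic on small deformations of $\Sigma$ \emph{inside} $Y$), and its proof occupies all of Section 3. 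The decisive new input, which your sketch does not identify, is the vanishing $H^0(\Sigma,\Omega^2_{Y|\Sigma})=0$ (Lemma \ref{annulationdeforme}, feeding into Lemma \ref{surjH2} to make $\delta:H^0(\Sigma,K_\Sigma(2nH))\to H^2(\Sigma,\mathcal{O}_\Sigma)$ surjective); this is where ``canonical'' is actually used, via the extension theorem for differential forms of \cite{GKKP} together with $H^2(\widetilde Y,\mathcal{O}_{\widetilde Y})=0$. Without some such vanishing the statement is false -- the paper's example of a cone over a quintic surface has isolated, Gorenstein, trivial-canonical-bundle singularities with $h^1(\mathcal{O}_Y)=h^2(\mathcal{O}_Y)=0$, yet the conclusion of Proposition \ref{secondprop} fails -- so ``the canonical-singularity assumption is precisely what ensures the Hodge-theoretic input survives'' cannot be left as an expectation.

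There is also a structural problem with the particular reduction you chose. Passing to a resolution $\nu:\widetilde Y\to Y$ and trying to prove $Z^4(\widetilde Y)=0$ forces you to work with ample linear systems on $\widetilde Y$, whose members necessarily meet the exceptional locus, where $K_{\widetilde Y}=\sum a_iE_i$ is nontrivial; the self-duality of $H^0(\Sigma,K_\Sigma)$ with $H^0(\Sigma,\mathcal{O}_\Sigma(nH))$ and the symmetry of the quadrics $q_\lambda$, on which the whole IVHS argument rests, are then lost. The paper instead exploits the \emph{isolatedness} of the singularities to choose $\Sigma=Z\cap Y$ with $Z$ a general member of $|nH_X|$, so that $\Sigma\subset Y_{reg}$ and $K_{Y|\Sigma}$ is genuinely trivial, and deforms $\Sigma$ inside $Y$ (equivalently, inside $|\tau^*nH|$ on $\widetilde Y$, avoiding $E$) rather than inside an ample system on $\widetilde Y$. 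Note also that the paper does not prove the Gysin surjectivity at the level of $H_2(Y)\to H_2(X)$ as you do, but directly for $H_2(\Sigma)\to H_2(X)$, by factoring through a general $Z\in|nH_X|$ on which $\mathcal{E}|_Z$ becomes ample and applying Sommese's Lefschetz theorem to $\Sigma\subset Z$; this sidesteps both the question of Lefschetz theorems for singular zero loci and your unverified claim that $H_2(\widetilde Y,\mathbb{Z})\to H_2(Y,\mathbb{Z})$ is surjective integrally (torsion in $H_1$ of the exceptional fibres is not obviously excluded). In short: the reductions are broadly reasonable, but the theorem is not proved, and the route via $Z^4(\widetilde Y)$ makes the missing step harder rather than easier.
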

The property that the $3$-fold $Y$ has isolated singularities will be crucial for the proof
of this Theorem :
it assures that if we consider a sufficiently general ample divisor $\Sigma \subset Y$, it does not meet
the singular locus of $Y$. We then generalize the methods from \cite{voisinuniruled},
based on the study of the infinitesimal variation of Hodge structure of these
surfaces $\Sigma$, to conclude.

If $\mathcal{E}$ is globally generated,  a generic section of
$\mathcal{E}$ is transverse with smooth vanishing locus; hence we get the following corollary:

\begin{corollary}  \label{corollarygenerated}
Let $X$ be a rationally connected, projective manifold of dimension $n \geq 4$.
Assume that  there exists a globally generated $1$-ample vector bundle $\mathcal{E}$ of rank $n-3$ on $X$ such that $det\,\mathcal{E}=-K_X$. Then   the group
$ H^{2n-2}(X,\mathbb{Z})$ is generated over $\mathbb{Z}$ by classes of curves.
\end{corollary}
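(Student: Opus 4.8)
The plan is to derive the Corollary from Theorem~\ref{theoclasses} by checking its only additional hypothesis: that $\mathcal{E}$ admits a transverse section whose zero set has isolated canonical singularities. When $\mathcal{E}$ is globally generated I expect a general section to have a \emph{smooth} zero locus, so there are in fact no singularities to analyze, and the problem is purely one of Bertini type.

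First I would set up the universal section. Since $\mathcal{E}$ is globally generated, the evaluation morphism $H^0(X,\mathcal{E}) \otimes \mathcal{O}_X \to \mathcal{E}$ is surjective, and its kernel $\mathcal{K}$ is a vector bundle on $X$ of rank $h^0(X,\mathcal{E}) - (n-3)$. The total space of $\mathcal{K}$ is canonically identified with the incidence variety
\[
I := \{ (x,s) \in X \times H^0(X,\mathcal{E}) \ :\ s(x) = 0 \},
\]
so $I$ is smooth and irreducible (as $X$ is a connected manifold) of dimension $h^0(X,\mathcal{E}) + 3$. Next I would invoke generic smoothness: the second projection $p \colon I \to H^0(X,\mathcal{E})$ is dominant, indeed surjective, because $\dim I = h^0(X,\mathcal{E}) + 3 > h^0(X,\mathcal{E})$; in particular every section has nonempty zero locus of dimension at least $3$. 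Working in characteristic $0$, there is a dense Zariski-open subset $U \subseteq H^0(X,\mathcal{E})$ over which $p$ is smooth, so for $s \in U$ the fibre $Y_s = p^{-1}(s)$ is smooth of pure dimension $3 = n - \operatorname{rk}\mathcal{E}$. Having the expected dimension and being smooth is exactly the statement that $s$ is a transverse section, and a smooth projective variety has empty singular locus, which trivially consists of isolated points and is canonical. Hence such an $s$ is a transverse section of $\mathcal{E}$ whose zero set $Y = Y_s$ has isolated canonical singularities.

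With this in hand, $X$, $\mathcal{E}$ and this section $s$ satisfy all the hypotheses of Theorem~\ref{theoclasses}, which then yields that $H^{2n-2}(X,\mathbb{Z})$ is generated over $\mathbb{Z}$ by classes of curves. I do not anticipate any genuine obstacle here: the mathematical content lies entirely in Theorem~\ref{theoclasses}, and the Corollary merely records that global generation supplies the transversality input automatically. The only point deserving a line of care is that the general zero locus is nonempty and has the expected dimension $3$, so that ``smooth'' really does force ``transverse''; this is covered by the dimension count on $I$ above.
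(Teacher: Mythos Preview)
Your approach is exactly the paper's: the authors deduce the corollary from Theorem~\ref{theoclasses} in a single sentence, observing that a generic section of a globally generated bundle is transverse with smooth vanishing locus, and your incidence-variety argument is just a fleshed-out version of that remark. One small imprecision worth fixing: the inequality $\dim I > h^0(X,\mathcal{E})$ does not by itself force $p$ to be dominant or surjective (a proper morphism from an irreducible variety of larger dimension can still have image a proper closed subvariety), so the nonemptiness of the general zero locus needs an independent word of justification---though the paper does not supply one either.
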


Let us now give a class of examples covered by Theorem \ref{theoclasses}, but not by Corollary \ref{corollarygenerated}:
a Fano manifold of dimension $n \geq 4$ has index $n-3$ if there exists an ample Cartier divisor $H$ (called the fundamental
divisor) such that $-K_X \simeq (n-3) H$.  For these manifolds we
will prove the following result:

\begin{theorem} \label{theoremisolated}
Let $X$ be a smooth Fano manifold of dimension $n \geq 4$ and index $n-3$.
Suppose moreover that the fundamental divisor satisfies $h^0(X, H) \geq n-2$.
Let $Y$ be the zero locus of a general section of the vector bundle $\mathcal{E}=H^{\oplus n-3}$.
Then $Y$  has dimension $3$ (i.e. the general section is transverse) and has
isolated canonical singularities.
\end{theorem}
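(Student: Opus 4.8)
The plan is to work locally along the base locus of the linear system $|H|$ and to bound the dimension and the singularities of a general complete intersection-type section $Y = s^{-1}(0)$, $s \in H^0(X, \mathcal{E})$, $\mathcal{E} = H^{\oplus(n-3)}$, by a dimension count on incidence varieties combined with inversion of adjunction. First I would dispose of the generic smoothness issue away from the base locus $B = \Bs|H|$: by Bertini, a general member of $|H|$ is smooth outside $B$, and since $\mathcal{E}$ is a direct sum of copies of $H$, a general section $s = (s_1,\dots,s_{n-3})$ has the property that $Y$ is smooth of dimension $3$ at every point of $X \setminus B$ — one checks that the differentials $ds_1,\dots,ds_{n-3}$ are independent there by a standard argument (the evaluation map $H^0(X,H) \to \mathcal{O}_{X,x}/\mathfrak{m}_x^2$ is surjective at $x \notin B$, applied coordinate-by-coordinate). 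So the entire problem is concentrated at the finitely many — or at least lower-dimensional — strata of $B$, and in fact the hypothesis $h^0(X,H) \geq n-2$ is exactly what is needed to make $B$ small.

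Next I would estimate $\dim B$. Since $-K_X = (n-3)H$ and $X$ is Fano, Kawamata–Viehweg vanishing gives $h^i(X, H) = 0$ for $i>0$, and Riemann–Roch together with known results on fundamental divisors of Fano manifolds of coindex $3$ (the del Pezzo/Mukai range) constrains $|H|$; the key point I would extract is that $|H|$ is \emph{not} composed with a pencil and that $\Bs|H|$ has codimension $\geq n-2$ in $X$, hence dimension $\leq 2$. Granting this, a general section of $\mathcal{E} = H^{\oplus(n-3)}$ cuts $B$ in the empty set (since $n-3$ general divisors in a system with $\leq 2$-dimensional base locus avoid it once $n-3 \geq 1$... more carefully, one intersects $B$ with $n-3$ general members and counts: $\dim B - (n-3) < 0$ provided $\dim B \le n-4$). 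Therefore $Y \cap B = \emptyset$ for general $s$, $Y$ is smooth of pure dimension $3$, and there is nothing left to prove. The honest situation, however, is that $\Bs|H|$ may have dimension up to $2$ and $Y$ may genuinely meet it; so the real work is the singularity analysis there.

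For that I would use the following standard incidence-variety argument. Fix a point $x \in B$ and let $k(x) = \dim (\text{image of } H^0(X,H) \to \mathcal{O}_{X,x}/\mathfrak{m}_x^{2})$ measure how singular a general member of $|H|$ is forced to be at $x$; stratify $B$ by the function $x \mapsto k(x)$. On each stratum $S$ of dimension $d$, a general section $s$ of $\mathcal{E}$ restricted near $S$ has a zero scheme whose singularities at points of $S$ are cut out, after choosing $n-3$ general sections, by a mild degeneration whose multiplicity I control via $k(x)$. I would set this up as: the space of sections $s$ for which $Y$ has a non-canonical singularity at \emph{some} point of $S$ has codimension in $H^0(X,\mathcal{E})$ strictly larger than $d$, so the general $s$ avoids it; summing over the finitely many strata gives that $Y$ has at worst canonical singularities, and these occur only over the deepest stratum, which is $0$-dimensional — hence the singularities are isolated. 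Inversion of adjunction (or directly the explicit resolution of an ordinary-type singularity) converts the transversality/multiplicity bound at $x$ into the statement that the pair, and hence $Y$, is canonical at $x$.

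The main obstacle I expect is precisely this local estimate at the base locus: controlling, uniformly in the stratum, how bad the general section of $H^{\oplus(n-3)}$ can be at a point where $|H|$ itself is not very ample, and checking that the numerical inequality ``codimension of the bad locus $>$ dimension of the stratum'' actually holds — this is where the hypothesis $h^0(X,H)\ge n-2$ must be used with full force, and where one needs the classification-flavored input on fundamental divisors of index $n-3$ Fano manifolds to know that $\Bs|H|$ cannot be too large or too degenerate. Once the stratification and the codimension count are in place, the passage from ``multiplicity bound'' to ``canonical'' is routine via the discrepancy computation for the blow-up of the (smooth, since it meets $B$ in a point) ambient point.
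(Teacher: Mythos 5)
There is a genuine gap, and it sits exactly where you flag ``the main obstacle.'' Your entire argument rests on the assertion that, on each stratum $S\subset \Bs|H|$ of dimension $d$, the locus of sections $s\in H^0(X,\mathcal{E})$ for which $Y=s^{-1}(0)$ acquires a non-canonical (or non-isolated) singularity at some point of $S$ has codimension $>d$. No mechanism is offered for proving this, and none is available under the stated hypotheses: the only quantitative input is $h^0(X,H)\ge n-2$, so $H^0(X,\mathcal{E})$ can be as small as $(n-3)(n-2)$-dimensional, and at a point of $\Bs|H|$ \emph{every} section of $H$ vanishes, so the system separates no jets there and the evaluation maps you would need for an incidence count are simply not surjective. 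In particular nothing in your sketch rules out the real danger, namely that the general complete intersection $Y_1\cap\dots\cap Y_{n-3}$ is singular along a \emph{curve}; your claim that the bad singularities ``occur only over the deepest stratum, which is $0$-dimensional'' is not derived from anything. Likewise, ``canonical'' cannot be extracted from a Bertini-type multiplicity bound: in the paper it is imported wholesale from Kawamata's effective results (\cite[Thm.5.2]{Kaw00}) and Floris's ladder theorem \cite{Flo10}, which produce a chain $X\supset Z_1\supset\dots\supset Z_{n-3}$ of normal varieties with canonical singularities; these are the substantive external inputs and your proposal does not use them.

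The actual proof is short but relies on a different idea that your sketch does not contain: the pencil trick combined with inversion of adjunction. Granting (from Kawamata/Floris) that $Z_{n-3}$ is a Calabi--Yau threefold with canonical, hence at most $1$-dimensional, singular locus, one assumes for contradiction that the singular locus of a general member is a curve. Fix two general members $Y_{n-3},Y_{n-2}$ and set $D=Z_{n-3}\cap Y_{n-2}$; by \cite[Prop.4.2]{Kaw00} the pair $(Z_{n-3},D)$ is log-canonical, so $D$ is reduced and $D_{\sing}$ is at most $1$-dimensional, and by inversion of adjunction $(X,\sum Y_i)$ is log-canonical near $Z_{n-3}$. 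Every member $Y'$ of the pencil spanned by $Y_{n-3},Y_{n-2}$ satisfies $Y'\cap Z_{n-4}\cap Y_{n-3}=D$, hence $Y'_{\sing}\subset D_{\sing}$; since infinitely many $Y'$ are each singular along a curve inside the fixed $1$-dimensional set $D_{\sing}$, they are all singular along one and the same curve $C$, and by semicontinuity so are $Y_{n-3}$ and $Y_{n-2}$. Then $\sum_{i=1}^{n-2}Y_i$ has multiplicity $\ge n$ along $C$, contradicting log-canonicity. This is the step that converts ``singular locus of dimension $\le 1$'' into ``isolated singularities,'' and it is absent from your proposal; I would encourage you to rebuild the argument around it rather than around a dimension count on $H^0(X,\mathcal{E})$.
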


We expect the technical condition $h^0(X, H) \geq n-2$ to be automatically satisfied and prove
this in low dimension  (cf. also \cite{Flo10}).
In particular we obtain the following unconditional theorem concerning Question
\ref{qvs} in the case of Fano manifolds of dimension at most five.

\begin{theorem} \label{theoremfourfolds}
i) Let $X$ be a smooth Fano fourfold. Then a general anticanonical divisor $Y \in |-K_X|$
has canonical isolated singularities.

ii)  $X$ being as above,  the group
$H^{6}(X,\mathbb{Z})$ is generated over $\mathbb{Z}$ by classes of curves (equivalently, $Z^{6}(X)=0$).

iii) Let $X$ be a smooth Fano fivefold of index $2$. Then the group
$H^{8}(X,\mathbb{Z})$ is generated over $\mathbb{Z}$ by classes of curves (equivalently, $Z^{8}(X)=0$).
\end{theorem}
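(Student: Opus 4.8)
The idea is that all three assertions follow by feeding earlier results to one another: Theorem \ref{theoremisolated} supplies the singular $3$-fold, and Theorem \ref{theoclasses} turns this into the statement about curve classes; the only genuinely new input is the numerical bound $h^0(X,H)\geq n-2$ required in Theorem \ref{theoremisolated}. For i), note that a smooth Fano fourfold has index $n-3=1$ with fundamental divisor $H=-K_X$, and for $n=4$ the bundle $\mathcal{E}=H^{\oplus(n-3)}$ of Theorem \ref{theoremisolated} is just the line bundle $-K_X$; so the zero locus of a general section is a general member $Y\in|-K_X|$, and i) becomes the case $n=4$ of Theorem \ref{theoremisolated}, \emph{provided} $h^0(X,-K_X)\geq 2$. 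To get this, apply Kawamata--Viehweg vanishing to $-K_X=K_X+(-2K_X)$ with $-2K_X$ ample: $H^i(X,-K_X)=0$ for $i>0$, so $h^0(X,-K_X)=\chi(X,-K_X)$. Riemann--Roch on the fourfold together with $\chi(\mathcal{O}_X)=1$ (valid since $X$, being Fano, is rationally connected) then gives
$$h^0(X,-K_X)=\tfrac{1}{6}(-K_X)^4+\tfrac{1}{12}\,(-K_X)^2\cdot c_2(X)+1 .$$
As $(-K_X)^4$ is a positive integer and $(-K_X)^2\cdot c_2(X)\geq 0$ by positivity of the second Chern class on Fano manifolds, the right-hand side exceeds $1$, hence is $\geq 2$.

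For ii), I would apply Theorem \ref{theoclasses} with $\mathcal{E}=-K_X$. It is a line bundle, so $\rk\,\mathcal{E}=1=n-3$ and $\det\mathcal{E}=-K_X$ automatically; it is $1$-ample since a large multiple $l(-K_X)$ is very ample, so that $\mathcal{O}_{\mathbb{P}(\mathcal{E})}(l)$ is globally generated and embeds $\mathbb{P}(\mathcal{E})\cong X$, whence the fibres of the associated morphism are $0$-dimensional; and $X$ is rationally connected (Campana, Koll\'ar--Miyaoka--Mori). By i), a general section of $\mathcal{E}$ has zero set $Y$ with isolated canonical singularities, so Theorem \ref{theoclasses} applies and yields $Z^6(X)=0$.

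For iii), let $X$ be a smooth Fano fivefold of index $2$: then $n=5$, the index equals $n-3=2$, and $-K_X=2H$ for the fundamental divisor $H$. The same Kawamata--Viehweg/Riemann--Roch scheme applied to $H=K_X+3H$ (with $3H$ ample, so $h^0(X,H)=\chi(X,H)$), expanded on the fivefold via $\chi(\mathcal{O}_X)=1$ and positivity of the relevant Chern-number intersections, should give $h^0(X,H)\geq 3=n-2$. Then Theorem \ref{theoremisolated} provides a general section of $\mathcal{E}=H^{\oplus 2}$ whose zero set $Y$ is a $3$-fold with isolated canonical singularities. Finally $\mathcal{E}=H\otimes\mathcal{O}_X^{\oplus 2}$ gives $\mathbb{P}(\mathcal{E})\cong\mathbb{P}^1\times X$ with $\mathcal{O}_{\mathbb{P}(\mathcal{E})}(1)=\mathcal{O}_{\mathbb{P}^1}(1)\boxtimes H$, so that a power $\mathcal{O}_{\mathbb{P}(\mathcal{E})}(l)$, $l\gg 0$, is very ample; hence $\mathcal{E}$ is $1$-ample, with $\rk\,\mathcal{E}=2=n-3$ and $\det\mathcal{E}=2H=-K_X$. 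Theorem \ref{theoclasses} then gives $Z^8(X)=0$.

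\textbf{Main obstacle.} The formal reductions above are routine; the real difficulty --- and, I expect, the reason the paper verifies $h^0(X,H)\geq n-2$ only in low dimension --- is establishing the numerical inequalities $h^0(X,-K_X)\geq 2$ and $h^0(X,H)\geq 3$. Through Riemann--Roch these amount to lower bounds on Chern-number intersections such as $(-K_X)^2\cdot c_2(X)\geq 0$ and its fivefold analogues, i.e. to positivity of the second Chern class of Fano manifolds; pinning down these positivity statements (rather than the packaging into Theorems \ref{theoclasses} and \ref{theoremisolated}) is the delicate point, and in higher dimension the needed inequality is at present only conjectural.
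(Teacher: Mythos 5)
Your overall strategy is exactly the paper's: both parts ii) and iii) are obtained by feeding Theorem \ref{theoremisolated} into Theorem \ref{theoclasses}, and the only thing left to check is the numerical hypothesis $h^0(X,H)\geq n-2$. Your verification of $h^0(X,-K_X)\geq 2$ for fourfolds is the paper's computation verbatim (Kodaira vanishing, Riemann--Roch, $\chi(\sO_X)=1$); the one point you leave vague, namely ``positivity of the second Chern class on Fano manifolds,'' is made precise in the paper as the combination of Peternell's theorem that the tangent bundle of a Fano manifold is generically ample with Miyaoka's theorem, which together give $(-K_X)^2\cdot c_2(X)\geq 0$. This is a theorem, not a conjecture, so your hedging in the ``main obstacle'' paragraph is unnecessary for the fourfold case. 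Your verification that $\mathcal{E}=-K_X$ (resp.\ $\mathcal{E}=H^{\oplus 2}$) is $1$-ample of the right rank and determinant is correct but essentially automatic, since an ample vector bundle is $1$-ample.

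The one genuine gap is in part iii). You assert that the ``same Kawamata--Viehweg/Riemann--Roch scheme \ldots should give $h^0(X,H)\geq 3$'' for a Fano fivefold of index $2$, but you do not carry this out, and it is not a routine extension of the fourfold computation: on a fivefold, $\chi(X,H)$ involves Todd-class terms in $c_2^2$, $c_3$ and $c_4$ whose signs are not controlled by the Peternell--Miyaoka statement alone, so one cannot simply quote non-negativity of a single Chern-number intersection. The paper does not redo this computation either; it invokes Floris's theorem \cite[Thm.1.1]{Flo10}, whose proof of $h^0(X,H)\geq n-2$ in dimensions $4$ and $5$ requires additional arguments. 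So for iii) your proposal identifies the right reduction but leaves the decisive inequality unproved, and the proposed method of proof would need to be replaced by (or supplemented with) the argument of \cite{Flo10}.
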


\begin{remark}  According to Koll\'ar \cite{kollarportugaliae}, Iskovskikh asked
 whether curve classes on  Fano manifolds $X$ are generated over $\mathbb{Z}$
 by classes of  {\it rational} curves. This question is of a different nature and leads to another birationally invariant group associated to $X$, namely
 the group of curve classes on $X$ modulo the subgroup generated by classes of
 rational curves in $X$. Koll\'ar (see \cite[Thm.3.13]{kollarbook}) proves
 that for rationally connected manifolds $X$, curve classes on   $X$ are generated over $\mathbb{Q}$
 by classes of  rational  curves. In other words, the group introduced above
 is of torsion.
The method used here does not give any insight on the Iskovskikh  question, that is whether this torsion group is zero or not for Fano manifolds.

 \end{remark}
Note that our result  in Theorem \ref{theoremfourfolds}, i)
on the anticanonical divisor is optimal, i.e.
it is easy to construct examples of Fano fourfolds without  any smooth
anticanonical divisor   (cf. Example \ref{examplenotsmooth}).
In particular it is not possible to prove the  statement ii) of Theorem \ref{theoremfourfolds}
simply by applying
the Lefschetz hyperplane theorem to $Y$ and using  the Calabi-Yau case of Theorem \ref{theoremvoisin}.

Let us explain the proof  of Theorem \ref{theoremisolated}  if $X$ is a fourfold.
Based on subadjunction techniques and effective non-vanishing results, Kawamata has shown
in \cite{Kaw00} that a general anticanonical divisor $Y$ is a Calabi-Yau threefold with at most canonical singularities,
in particular the singular locus of $Y$ has dimension at most one.
We will argue by contradiction and suppose that a general $Y$ is singular along a curve $C$.
A priori the curve $C$ depends on $Y$, but we show that if we fix a pencil spanned by two general elements $Y_1, Y_2$  in $|-K_X|$,
all the elements of the pencil are singular along the same curve $C$.
In particular the pair $(X, Y_1+Y_2)$ is not log-canonical along the curve $C$, by inversion of adjunction
this contradicts a result of Kawamata on ample divisors on Calabi-Yau threefolds \cite[Prop.4.2]{Kaw00}.

If we apply the same argument to Fano threefolds we obtain a new proof of Shokurov's theorem.

\begin{theorem} \label{theoremshokurov} \cite{Sho79}
Let $X$ be a smooth Fano threefold.
If $Y \in |-K_X|$ is a general element, it is smooth.
\end{theorem}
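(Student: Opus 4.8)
The plan is to carry out the argument from the fourfold case of Theorem~\ref{theoremisolated} one dimension lower, with the singular curve replaced by a singular point.

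First, by Kawamata's results in \cite{Kaw00} (subadjunction together with effective non-vanishing), a general $Y\in|-K_X|$ is a normal surface with at worst canonical singularities and $K_Y\sim 0$; in particular $Y$ is a K3 surface with at worst Du Val singularities, so $\mathrm{Sing}(Y)$ is a finite set. One may assume $\dim|-K_X|\ge 1$, since $h^0(X,-K_X)\ge 2$ for a smooth Fano threefold. Now suppose for contradiction that a general $Y$ is singular. By Bertini's theorem $\mathrm{Sing}(Y)$ is contained in $B:=\mathrm{Bs}|-K_X|$, which is therefore nonempty; since $B\subseteq Y_1\cap Y_2$ for two general members and $|-K_X|$ has no fixed component, $\dim B\le 1$.

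The key step, parallel to the fourfold case, is to show that for a general pencil $\{Y_t\}_{t\in\mathbb{P}^1}\subset|-K_X|$ every member is singular at one and the same point $b_0$. Writing $Y_1,\,Y_2$ locally as $\{s_1=0\},\,\{s_2=0\}$ near a point $p\in B$, the member $Y_t$ is singular at $p$ exactly when $ds_1(p)$ and $ds_2(p)$ are proportional, and every $Y_t$ is singular at $p$ exactly when $ds_1(p)=ds_2(p)=0$; so the assertion is that two general anticanonical divisors have a common singular point. To prove it I would use that $\mathrm{Sing}(Y)$ is finite (Kawamata) and contained in the fixed scheme $B$: consider the incidence variety $\Sigma:=\overline{\{(Y,p):p\in\mathrm{Sing}(Y)\}}\subset|-K_X|\times X$; its projection to $|-K_X|$ is dominant and generically finite, and its image in $X$ is contained in $B$. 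Using that $\mathrm{Bs}|-K_X|$ is at most finite for a Fano threefold, this image is a finite set, and then a pigeonhole argument over the irreducible projective space $|-K_X|$ — the loci $Z_b:=\{Y:\mathrm{mult}_bY\ge 2\}$ are linear subspaces, and finitely many of them cover a dense subset — yields a point $b_0$ that lies in the singular locus of \emph{every} anticanonical divisor. I expect this to be the main obstacle: one has to rule out the possibility that the singular point of the general member moves along a base curve, and it is precisely here that the geometry of $\mathrm{Bs}|-K_X|$ (or a refinement of Kawamata's localization of the canonical singularities of a general anticanonical section) comes in.

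Granting this, fix two general members $Y_1,\,Y_2$. Both are singular at $b_0$, so $\mathrm{mult}_{b_0}(Y_1+Y_2)\ge 4>3=\dim X$; blowing up $b_0$ one gets discrepancy $2-\mathrm{mult}_{b_0}(Y_1+Y_2)<-1$, so $(X,Y_1+Y_2)$ is not log canonical at $b_0$. Since $Y_1$ is a Cartier divisor in the smooth variety $X$ and has only Du Val singularities, inversion of adjunction along $Y_1$ gives that $(Y_1,Y_2|_{Y_1})$ is not log canonical at $b_0$. On the other hand $Y_1$ is a K3 surface with Du Val singularities, the restriction map $H^0(X,-K_X)\to H^0(Y_1,(-K_X)|_{Y_1})$ is surjective (as $H^1(X,\mathcal{O}_X)=0$), so $Y_2|_{Y_1}$ is a general member of the ample linear system $|(-K_X)|_{Y_1}|$; by the surface case of Kawamata's result on ample divisors on canonical Calabi--Yau varieties \cite[Prop.~4.2]{Kaw00}, the pair $(Y_1,Y_2|_{Y_1})$ \emph{is} log canonical. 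This contradiction shows that a general $Y\in|-K_X|$ is smooth.
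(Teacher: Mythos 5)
Your overall strategy coincides with the paper's: reduce to showing that two general members of $|-K_X|$ share a singular point, then derive a contradiction between the multiplicity-$4$ point of $Y_1+Y_2$ in the smooth threefold $X$ and the log canonicity of $(Y_1,Y_1\cap Y_2)$ coming from (the K3 analogue of) \cite[Prop.4.2]{Kaw00} via inversion of adjunction; that final step is exactly the paper's. The gap is in the step you yourself flag as the main obstacle, and your proposed fix does not work: it is \emph{false} that ${\rm Bs}|-K_X|$ is at most finite for a smooth Fano threefold. Take $X=\mathbb{P}^1\times S$ with $S$ a degree-one del Pezzo surface; then $H^0(X,-K_X)=H^0(\mathbb{P}^1,\mathcal{O}(2))\otimes H^0(S,-K_S)$ and ${\rm Bs}|-K_X|=\mathbb{P}^1\times\{p\}$, where $p$ is the base point of $|-K_S|$ --- a curve. (The usual statement that the anticanonical base locus of a Fano threefold is empty, a point, or a smooth rational curve is deduced \emph{from} Shokurov's theorem, so assuming finiteness here would in any case be close to circular.) Without finiteness of the relevant locus, your pigeonhole over the linear subspaces $Z_b$ does not get off the ground: a priori the finitely many singular points of $Y_t$ could move along a base curve as $t$ varies.

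The paper closes exactly this gap with an argument for which you already have all the ingredients. Fix a general pencil $\langle Y_1,Y_2\rangle$ and set $D:=Y_1\cap Y_2$, the scheme-theoretic base locus of the pencil. By \cite[Prop.4.2]{Kaw00} the pair $(Y_1,D)$ is log canonical, hence $D$ is a \emph{reduced} curve, hence $D_{\sing}$ is a finite set of points. For every member $Y'$ of the pencil, Bertini gives $Y'_{\sing}\subset{\rm Bs}\langle Y_1,Y_2\rangle=D$, and a point of $D$ at which $Y'$ is singular is automatically a singular point of $D$ (since $D$ is cut out of $Y'$ by one further equation, its Zariski tangent space there has dimension at least $2>\dim D$). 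So the singular points of \emph{all} members of the pencil are confined to the finite set $D_{\sing}$, and pigeonhole over the infinite pencil together with upper semicontinuity of multiplicity produces the common singular point of $Y_1$ and $Y_2$. In short, the reducedness of $D$ --- a consequence of the very same Proposition 4.2 you invoke at the end --- is what rules out the moving singular point; no control of ${\rm Bs}|-K_X|$ itself is needed.
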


Earlier proofs of this statement used Saint Donat's study of linear systems on K3 surfaces \cite{SDo74}
which so far has no analogue in higher dimension.
By contrast our strategy of proof generalizes immediately to arbitrary dimension and shows the following:
if Kawamata's nonvanishing conjecture \cite[Conj.2.1]{Kaw00} is true,
the singular locus of a general anticanonical divisor on any Fano manifold has codimension at least three (which is the optimal bound).

Let us conclude this introduction with a comment on assumptions in Theorem \ref{theoclasses}:
in the proof of Theorem \ref{theoclasses} we will only use that $-K_X$ is $1$-ample and
the cohomology group  $H^1(X,\mathcal{O}_X)$ vanishes. This seems to be weaker than supposing that $X$ is rationally connected and $-K_X$ is $1$-ample,
but in fact these conditions are equivalent:

\begin{proposition}\label{newprop} Let $X$ be a projective manifold such that
$-K_X$ is $1$-ample and $H^1(X,\mathcal{O}_X)=0$. Then $X$ is rationally connected.
\end{proposition}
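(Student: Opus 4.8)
The plan is to use the $1$-ampleness of $-K_X$ to produce enough rational curves and then appeal to the characterization of rationally connected manifolds via free rational curves. First I would recall that $1$-ampleness of $-K_X$ implies that $-K_X$ is nef and big (indeed, some multiple of the tautological bundle on $\PP(-K_X)=X$ itself is globally generated, so $-K_X$ is semiample, and the associated contraction has at most $1$-dimensional fibers, forcing $-K_X$ to be big). Thus $X$ is a weak Fano manifold, and the basepoint-free theorem gives a birational morphism $\pi\colon X\to X'$ onto a normal projective variety with $-K_{X'}$ ample and $\KQ$-Cartier; the exceptional fibers of $\pi$ are at most $1$-dimensional, so $X'$ has terminal (in fact, canonical Gorenstein, since $K_X=\pi^*K_{X'}$ up to the usual discrepancy computation for small-fiber contractions) singularities and is a (possibly singular) Fano variety.

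Next I would invoke the rational connectedness of (log) Fano varieties: by Zhang, or by the original argument of Kollár–Miyaoka–Mori adapted to the Gorenstein canonical case (see also Hacon–McKernan), a normal projective variety $X'$ with $-K_{X'}$ ample and at worst canonical singularities is rationally connected. Since $\pi$ is birational, $X$ is then rationally connected as well, because rational connectedness is a birational invariant among smooth projective varieties (or, more directly, one lifts a very free rational curve on $X'$ avoiding the finitely many non-isomorphism loci). This already yields the conclusion; the role of $H^1(X,\sO_X)=0$ is then automatic (it holds for any rationally connected manifold) and is not strictly needed in this route — though it could be used in an alternative argument that bypasses $X'$ entirely.

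An alternative, more self-contained approach avoids passing to $X'$: one shows directly that $X$ is uniruled since $-K_X$ is nef and big (by Miyaoka–Mori, a smooth projective variety with $-K_X$ nef and big is uniruled), then runs a maximally rationally connected (MRC) fibration argument. The base $Z$ of the MRC fibration would be shown to have $-K_Z$ pseudoeffective obstruction vanish using the nefness and bigness of $-K_X$ together with $H^1(X,\sO_X)=0$ (which controls the Albanese and hence the non-uniruledness of $Z$ via Graber–Harris–Starr and the fact that $Z$ is not uniruled); one concludes $\dim Z=0$. The main obstacle in either route is the same: one must handle the singularities of the semiample contraction $X'$ carefully — verifying it is canonical Gorenstein so that the known rational connectedness results for singular Fanos apply — or, in the MRC approach, one must rule out a positive-dimensional non-uniruled base $Z$, which is exactly where the combination ``$-K_X$ big'' $+$ ``$H^1(\sO_X)=0$'' does the work and requires a genuine (if standard) argument rather than a formal manipulation.
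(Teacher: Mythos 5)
There is a genuine gap, and it occurs at the very first step: $1$-ampleness of $-K_X$ does \emph{not} imply that $-K_X$ is big. By Sommese's definition, $1$-ampleness only says that some multiple of $-K_X$ is globally generated and that the induced morphism $f:X\to \mathbb{P}^N$ has fibers of dimension $\leq 1$; hence the image has dimension $\geq n-1$, and when it has dimension exactly $n-1$ one gets $(-K_X)^n=0$, so $-K_X$ is not big. This case really occurs and is exactly the case the proposition is designed to cover: for a rational elliptic surface ($\mathbb{P}^2$ blown up in the nine base points of a cubic pencil) one has $-K_X=f^*\mathcal{O}_{\mathbb{P}^1}(1)$, which is $1$-ample but not big. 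Consequently your Route 1 (weak Fano $\Rightarrow$ crepant anticanonical model $X'$ with canonical Gorenstein singularities $\Rightarrow$ rational connectedness of singular Fanos) collapses at the outset, and your remark that $H^1(X,\mathcal{O}_X)=0$ is ``not strictly needed'' is false: $X=\mathbb{P}^1\times E$ with $E$ elliptic has $-K_X=pr_1^*\mathcal{O}(2)$ semiample with $1$-dimensional fibers, hence $1$-ample, yet $X$ is not rationally connected. The hypothesis $H^1(X,\mathcal{O}_X)=0$ is what rules out precisely such examples. Route 2 inherits the same error (Miyaoka--Mori is invoked via bigness) and, more importantly, leaves the decisive step --- showing the MRC base is a point --- as ``a genuine (if standard) argument,'' which is the entire content of the proposition; uniruledness of $X$ itself is the easy part.

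For comparison, the paper's proof handles the non-big case as follows: nefness of $-K_X$ with numerical dimension $\geq n-1$ gives $H^i(X,\mathcal{O}_X)=0$ for $i>1$, hence $\chi(X,\mathcal{O}_X)=1$ and (by multiplicativity of $\chi$) the non-existence of nontrivial finite \'etale covers; the Demailly--Peternell--Schneider structure theorem for compact K\"ahler manifolds with Hermitian semipositive anticanonical bundle then forces $H^0(X,\Omega_X^{\otimes l})=0$ for all $l>0$; finally, if the MRC base $B$ were positive-dimensional, it would be non-uniruled by Graber--Harris--Starr, of Kodaira dimension $0$ by Zhang's theorem on varieties with nef anticanonical divisor, and a pluricanonical form on $B$ would pull back to a nonzero element of $H^0(X,\Omega_X^{\otimes bm})$, a contradiction. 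If you want to salvage your approach, you would need to supply an argument of comparable substance for the fibration case $\dim f(X)=n-1$; the big case alone does not suffice.
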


\begin{proof}
Since $-K_X$ is nef with numerical dimension $n-1$, we have $H^i(X,\mathcal{O}_X)=0$, for $i>1$ \cite[6.13]{Dem00}.
 Hence we have $\chi(X,\mathcal{O}_X)=1$. Furthermore $X$ has no finite \'{e}tale cover, because
an  \'{e}tale cover $X'\rightarrow X$ of degree $m>1$ would also satisfy the condition
 that $-K_{X'}$ is $1$-ample, which implies
as above that $H^i(X',\mathcal{O}_{X'})=0$, for $i>1$. Thus $\chi(X',\mathcal{O}_{X'})\leq 1$, which contradicts $\chi(X',\mathcal{O}_{X'})=m\chi(X,\mathcal{O}_X)=m$.

 According to \cite{dps}, it follows that $X$ is a product  of Calabi-Yau
 varieties, symplectic holomorphic varieties and varieties $W$ such that
 $H^0(W,\Omega_W^{\otimes l})=0,\,l>0$. The first two types can not occur because
 $H^0(X,\Omega_X^l)=0,\,l>0$.
 It follows that $H^0(X,\Omega_X^{\otimes l})=0,\,l>0$.

 Consider now the maximal rationally connected fibration $\psi:X\dashrightarrow B$ of $X$
 (cf. \cite{kmm}). If $X$ is not rationally connected, $B$ is not uniruled by \cite{ghs},
 and thus the Kodaira dimension of $B$ is $0$, according to \cite{dQZ}. But then, if $b:={\rm dim}\,B>0$,
 there is a non zero section of $K_B^{\otimes m}$ for some $m>0$, which gives a non zero section
 of $\Omega_X^{\otimes bm}$ and provides a contradiction.
\end{proof}

\vspace{0.5cm}

{\it We dedicate this paper  to the memory of  Eckart Viehweg, who contributed in a major way to
both birational geometry and Hodge theory.}

\section{Anticanonical divisors}

While anticanonical divisors on Fano threefolds (and Fano $n$-folds of index $n-2$)
are well understood \cite{Sho79, Mel}, our knowledge on higher-dimensional Fano manifolds remains rather limited.
A first step was done by Kawamata:

\begin{theorem} \cite[Thm.5.2]{Kaw00} \label{theoremkawamata}
Let $X$ be a Fano fourfold with at most canonical Gorenstein singularities.
If $Y \in |-K_X|$ is a general element, it is a Calabi-Yau threefold with at most canonical singularities.
\end{theorem}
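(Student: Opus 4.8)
The plan is to combine Kawamata--Viehweg vanishing (to obtain the Calabi--Yau conclusion cheaply) with the theory of minimal log canonical centres and Kawamata's subadjunction formula (to control the singularities of a general member).

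The Calabi--Yau part is the soft half. Since $-K_X$ is ample and Cartier and $X$ has canonical Gorenstein singularities, Kawamata--Viehweg vanishing gives $H^i(X,\mathcal O_X)=H^i(X,\mathcal O_X(K_X)\otimes\mathcal O_X(-K_X))=0$ for $i>0$, and likewise $h^0(X,-K_X)=\chi(X,-K_X)>0$, so $|-K_X|\neq\emptyset$. For $Y\in|-K_X|$ the divisor $Y$ is ample, hence connected; adjunction gives $\omega_Y\cong(\omega_X\otimes\mathcal O_X(Y))|_Y\cong\mathcal O_Y$; and once we know $Y$ is normal, the sequence $0\to\mathcal O_X(K_X)\to\mathcal O_X\to\mathcal O_Y\to 0$ together with the vanishing above yields $H^1(Y,\mathcal O_Y)=0$. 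So everything reduces to showing that a general $Y$ has at most canonical singularities --- which in particular forces $Y$ normal, closing the loop.

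For the singularities I would argue by contradiction, assuming a general $Y\in|-K_X|$ is not canonical. A Bertini-type argument shows that off $\Bs\,|-K_X|$ the divisor $Y$ has at most canonical singularities, so the bad locus of $Y$ lies in a \emph{fixed} proper closed subset $B:=\Bs\,|-K_X|$. Thus some irreducible component $W\subseteq B$ is, after scaling $Y$ to a boundary $\Delta\sim_{\mathbb Q}-\lambda K_X$ and performing tie-breaking, a minimal log canonical centre of $(X,\Delta)$ for suitable $0<\lambda\le 1$. Kawamata's subadjunction then produces an effective $\mathbb Q$-divisor $\Delta_W$ on the normalisation $W^\nu$ with $(W^\nu,\Delta_W)$ klt and
\[
K_{W^\nu}+\Delta_W\;\sim_{\mathbb Q}\;(K_X+\Delta)|_{W^\nu}\;\sim_{\mathbb Q}\;-(1-\lambda)K_X|_{W^\nu},
\]
so $-(K_{W^\nu}+\Delta_W)$ is nef, and ample when $\lambda<1$. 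Feeding this lower-dimensional klt pair into the effective non-vanishing / base-point-freeness machinery of \cite{Kaw00} shows that $|-K_X|$ contains members that are not singular along $W$, contradicting $W\subseteq B$. (When $\lambda=1$ the restricted class is only numerically trivial and one argues separately that $W$ would then be a very special subvariety incompatible with $-K_X$ ample.)

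The main obstacle is this second step. The delicate point is that the bad locus of $Y$ a priori moves with $Y$; the crux is to pin it to a fixed log canonical centre $W$ and then extract from Kawamata's non-vanishing estimates on $W$ enough positivity of $-K_X|_{W^\nu}$ to produce anticanonical members not singular along $W$. Tracking discrepancies correctly through the subadjunction, and disposing of the borderline case where the restricted class is numerically trivial, is where the real work lies; the Gorenstein hypothesis is used precisely to keep $-K_X$ Cartier so that the vanishing and non-vanishing inputs apply as stated.
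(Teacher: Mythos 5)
This statement is not proven in the paper at all: it is quoted, with attribution, as Theorem 5.2 of Kawamata's article \cite{Kaw00} and used as an input to the results that the paper does prove (Theorem \ref{theoremisolated}). So there is no internal proof to compare against; the relevant comparison is with Kawamata's original argument. Your sketch does in fact reconstruct the broad strategy of that argument: the Calabi--Yau properties come from adjunction and Kawamata--Viehweg vanishing, and the control of singularities comes from pinning the non-canonical locus of a general member inside the base locus, extracting a minimal log canonical centre by tie-breaking, applying subadjunction, and using effective non-vanishing on the centre together with a lifting argument to contradict $W\subseteq\Bs|-K_X|$.

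That said, as written the proposal is a strategy outline rather than a proof, and the places where it defers to ``the machinery of \cite{Kaw00}'' are exactly where the content lies. Concretely: (1) the assertion $h^0(X,-K_X)=\chi(X,-K_X)>0$ is not free --- vanishing gives the equality, but the positivity of $\chi(X,-K_X)=\tfrac16(-K_X)^4+\tfrac1{12}(-K_X)^2c_2+1$ requires a Miyaoka-type non-negativity of $(-K_X)^2\cdot c_2$, which is delicate on a singular fourfold and is itself Kawamata's effective non-vanishing theorem in this setting (compare the proof of Theorem \ref{theoremfourfolds} in the paper, which invokes Peternell and Miyaoka even in the smooth case); (2) the step ``feeding the klt pair $(W^\nu,\Delta_W)$ into the machinery shows $|-K_X|$ has members not singular along $W$'' conflates two separate things, namely non-vanishing of $H^0(W,\mathcal O_W(-K_X))$ (which needs the effective non-vanishing conjecture in $\dim W\leq 3$, known only with work) and the lifting of such a section to $X$ modulo the multiplier ideal of the tie-broken pair via Nadel vanishing --- this lifting is the technical heart of Kawamata's proof and cannot be waved at; (3) the containment of the minimal centre in $\Bs|-K_X|$ requires the standard two-general-members trick (the centres of $(X,\lambda Y)$ for a single general $Y$ need not lie in the base locus), and the borderline case $\lambda=1$, where the restricted class is only numerically trivial, is acknowledged but not resolved. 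Since the theorem is an external citation, none of this affects the paper; but your write-up should either carry out these steps or cite \cite{Kaw00} for them rather than presenting the sketch as a proof.
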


In view of this statement and Shokurov's theorem \ref{theoremshokurov}
one might hope that for a smooth Fano fourfold
the general anticanonical divisor $Y$ is also smooth. Here is an easy counterexample which shows that
such an $Y$ might even not be $\mathbb{Q}$-Cartier :

\begin{example} \label{examplenotsmooth}
Let $S$ be the blow-up of $\PP^2$ in eight points in general position. Then $S$ is a Fano surface
whose anticanonical system has exactly one base point which we denote by $p$.
Set $X:=S \times S$ and $S_i:=\fibre{p_i}{p}$ where $p_i$ is the projection on the $i$-th factor.
Then $X$ is a smooth Fano fourfold and
\[
Bs |-K_X| = S_1 \cup S_2.
\]
Let $Y \in |-K_X|$ be a general element, then
$Bs |-K_X| \subset Y$, so the surfaces $S_1, S_2$ are Weil divisors in $Y$.
If they were $\Q$-Cartier, their intersection $S_1 \cap S_2$ would have dimension at least one,
yet we have $S_1 \cap S_2=(p,p)$.
Thus the variety $Y$ is not $\Q$-factorial.
\end{example}

Very recently Floris generalised Theorem \ref{theoremkawamata} to Fano varieties of index $n-3$:

\begin{theorem} \cite{Flo10} \label{theoremfloris}
Let $X$ be a smooth Fano manifold of dimension $n \geq 4$ and index $n-3$.
Suppose moreover that the fundamental divisor satisfies $h^0(X, H) \geq n-2$.
Then there exists a sequence
\[
X \supsetneq Z_1 \supsetneq  Z_2 \supsetneq \ldots \supsetneq Z_{n-3}
\]
such that $Z_{i+1} \in |H|_{Z_i}|$ is a normal variety of dimension $n-i$ with at most canonical singularities.
\end{theorem}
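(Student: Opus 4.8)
The plan is to build the chain one divisor at a time, the inductive statement being: if $W$ is a normal projective variety with at most canonical Gorenstein singularities, $L$ an ample Cartier divisor with $-K_W\sim_{\mathbb{Q}}rL$ for some integer $r\ge 1$, $H^1(W,\mathcal{O}_W)=0$, and $h^0(W,L)\ge\dim W-2$, then a general $D\in|L|$ is again a normal projective variety of dimension $\dim W-1$ with at most canonical Gorenstein singularities, and it satisfies $-K_D\sim_{\mathbb{Q}}(r-1)L|_D$, $H^1(D,\mathcal{O}_D)=0$ and $h^0(D,L|_D)\ge\dim D-2$. Applying this with $W=X$, $L=H$, $r=n-3$ — whose hypotheses hold since $X$ is a smooth Fano manifold, so $H^1(X,\mathcal{O}_X)=0$, and $h^0(X,H)\ge n-2$ is assumed — and iterating, one sets $Z_{i+1}$ to be the general member produced at step $i$, so that $Z_i$ has dimension $n-i$; after $n-3$ steps one reaches the threefold $Z_{n-3}$ with $K_{Z_{n-3}}\sim_{\mathbb{Q}}0$. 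All the cohomological parts of the inductive statement are routine: $D$ is Gorenstein because it is a Cartier divisor on a Gorenstein variety; $K_D=(K_W+L)|_D\sim_{\mathbb{Q}}-(r-1)L|_D$ by adjunction; and feeding $0\to\mathcal{O}_W\to\mathcal{O}_W(L)\to\mathcal{O}_D(L|_D)\to 0$ and $0\to\mathcal{O}_W(-L)\to\mathcal{O}_W\to\mathcal{O}_D\to 0$ into Kawamata--Viehweg vanishing on $W$ (legitimate since canonical Gorenstein singularities are rational and $-K_W$ is nef and big for $r\ge 1$) gives $h^0(D,L|_D)=h^0(W,L)-1$ and $H^1(D,\mathcal{O}_D)=0$.

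Hence everything reduces to showing that a general $D\in|L|$ is normal with canonical singularities, and I would reduce this to the single assertion that the pair $(W,D)$ is canonical: a canonical pair is plt, so its boundary divisor $D$ is normal, and by adjunction a canonical pair $(W,D)$ with $D$ a prime divisor forces $D$ itself to be canonical. To see that $(W,D)$ is canonical, take a log resolution $\mu\colon\widetilde W\to W$ of $W$ together with $|L|$, write $\mu^*L=M+F$ with $M$ base-point free and $F\ge 0$ the fixed part, and $K_{\widetilde W}=\mu^*K_W+\sum e_jE_j$ with all $e_j\ge 0$. Assuming $|L|$ has no fixed divisor (so that a general $D$ is irreducible, a point one checks separately, e.g.\ by Bertini away from the base locus), the strict transform of a general $D$ is a general member of $|M|$, and one computes $K_{\widetilde W}+\widetilde D=\mu^*(K_W+D)+\sum(e_j-f_j)E_j$ with $f_j=\operatorname{mult}_{E_j}F$, so that $(W,D)$ is canonical exactly when $f_j\le e_j$ for every $\mu$-exceptional $E_j$. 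Establishing this inequality is the one genuinely hard point, and it is here that the positivity $-K_W\sim_{\mathbb{Q}}rL$ and the bound $h^0(W,L)\ge\dim W-2$ must enter: the idea is that a too-deep fixed component $F$ would, via $-K_{\widetilde W}\sim_{\mathbb{Q}}r(M+F)-\sum e_jE_j$ and Kawamata--Viehweg vanishing applied to the nef and big divisor $M$, produce sections of $L$ vanishing to an order incompatible with $h^0(W,L)\ge\dim W-2$; concretely, one would combine the effective non-vanishing and base-point-freeness estimates of \cite{Kaw00} with the connectedness of the non-klt locus to locate the non-klt centres of $(W,tD)$ as $t\to 1^-$ and rule them out. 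When $\dim W=4$, i.e.\ $r=1$, this last step is precisely Kawamata's Theorem \ref{theoremkawamata} applied to the Gorenstein canonical Fano fourfold $Z_{n-4}$ produced by the induction; so in practice I would run the induction only down to dimension four and invoke Theorem \ref{theoremkawamata} for the final descent to the threefold $Z_{n-3}$.

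The main obstacle is thus the inequality bounding the fixed part of $\mu^*|H|_{Z_i}|$ by the discrepancies of $Z_i$ — equivalently, the statement that general members do not acquire worse-than-canonical singularities along the base locus. None of the formal exact-sequence bookkeeping helps here, and one genuinely needs the subadjunction and effective non-vanishing machinery; it is also at this step alone that the hypothesis $h^0(X,H)\ge n-2$ is used, to guarantee that at every stage of the ladder the system $|H|_{Z_i}|$ carries enough sections for those estimates to apply.
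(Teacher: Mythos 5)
First, a remark on the ground rules of the comparison: this theorem is stated in the paper as a quotation from \cite{Flo10} and is \emph{not} proved there, so the only meaningful benchmark is Floris' argument, which in turn extends Kawamata's proof of Theorem \ref{theoremkawamata}. Your proposal reproduces the correct skeleton: the ladder induction, the cohomological bookkeeping ($K_D\sim (r-1)L|_D$ by adjunction, surjectivity of restriction on $H^0$, vanishing of $H^1(D,\mathcal{O}_D)$), and the reduction of ``$D$ normal with canonical singularities'' to ``the pair $(W,D)$ is canonical'' via plt-ness of canonical pairs and adjunction are all sound and are indeed how the argument is organized.

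The genuine gap is that the one statement carrying the entire content of the theorem --- that for general $D\in|L|$ the pair $(W,D)$ is canonical, i.e.\ $f_j\le e_j$ on a log resolution --- is replaced by a heuristic (``a too-deep fixed component would produce sections of $L$ vanishing to an order incompatible with $h^0\ge \dim W-2$''). Nothing in your set-up forces this: the actual proof argues by contradiction with the log canonical threshold $c<1$ of $(W,D)$, uses a tie-breaking perturbation to isolate a \emph{minimal} centre $C$ of log canonical singularities, applies Kawamata's subadjunction to put a klt structure on $C$ whose log canonical divisor is dominated by a small multiple of $L|_C$, invokes effective non-vanishing on $C$ --- which is known only in low dimension, and this is precisely where $h^0(W,L)\ge\dim W-2$ enters, via $\dim \mathrm{Bs}|L|\le 2$ forcing $\dim C\le 2$ --- and finally lifts a section of $L|_C$ to $W$ by a vanishing theorem for the multiplier ideal of $(W,cD)$, contradicting $C\subset\mathrm{Bs}|L|$. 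None of these steps is a formal consequence of your framework, and your fallback of invoking Theorem \ref{theoremkawamata} only disposes of the very last rung ($r=1$); for $n\ge 5$ every intermediate step with $r\ge 2$ is exactly the new content of \cite{Flo10} and remains unproved in your write-up. Two smaller but real omissions: you must actually exclude a fixed \emph{divisor} of $|L|$ (Bertini does not do this, and a fixed component would destroy both the irreducibility of $D$ and your discrepancy computation), and the implication ``$(W,D)$ canonical $\Rightarrow D$ canonical'' requires a precise adjunction statement (with trivial different, using that $D$ is Cartier on the Gorenstein $W$) rather than a general appeal to inversion of adjunction.
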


\subsection{Proof of the structure results}

Let us recall that if $X$ is a normal variety and $D$
an effective $\Q$-divisor on $X$ such that $K_X+D$ is $\Q$-Cartier, the pair $(X, D)$ is
log-canonical if for every birational morphism \holom{\mu}{X'}{X} from a normal variety $X'$
we can write
$$
K_{X'} = \mu^* (K_X+D) + \sum_j a_j E_j
$$
such that $a_j \geq -1$ for all $j$.

\begin{proof}[Proof of Theorem \ref{theoremisolated}]
We will treat first the case of Fano fourfolds
and in a second step apply Floris' result to prove the higher-dimensional case.

{\em a) $\dim X=4$.}
Let $Y_1$ be a general element in $|-K_X|$ and consider the restriction sequence
$$
0 \rightarrow \sO_X \rightarrow \sO_X(-K_X) \rightarrow \sO_{Y_1}(-K_X) \rightarrow 0.
$$
Since $h^1(X, \sO_X)=0$ we have a surjection
$$
H^0(X, \sO_X(-K_X)) \rightarrow H^0(Y_1, \sO_{Y_1}(-K_X)),
$$
so a general element in the linear system $|-K_X|_{Y_1}|$ is obtained by intersecting $Y_1$ with another general element $Y_2 \in |-K_X|$.
By \cite[Prop.4.2]{Kaw00} we know that for
$$
D:=Y_1 \cap Y_2 \subset |-K_X|_{Y_1}|
$$
general, the pair $(Y_1, D)$ is log-canonical. In particular $D$ is a reduced surface, so the singular locus of $D$ has dimension at most one.
Since $D$ is a complete intersection cut out by the divisors $Y_1$ and $Y_2$, we have
$$
Y_{i,\sing} \subset D_{\sing} \qquad \forall \ i=1,2.
$$
Moreover by inversion of adjunction \cite[Thm.7.5]{Kol95} the pair $(X, Y_1+Y_2)$ is log-canonical near the divisor $Y_1$.

We will now argue by contradiction and suppose that a general element in $|-K_X|$ is singular along a curve.
Take a general element $Y'$ in the pencil $<Y_1, Y_2> \subset |-K_X|$ spanned by $Y_1$ and $Y_2$.
Then we have
$$
Y' \cap Y_1 = Y_2 \cap Y_1 = D,
$$
so we see as above that
$$
Y'_{\sing} \subset D_{\sing}.
$$
Since $Y'$ varies in an infinite family with each member having a singular locus of dimension one and
$D_{\sing}$ has dimension at most one, there exists a curve $C \subset D_{\sing}$ such that every general $Y'$ is singular
along $C$. By upper semicontinuity of the multiplicity this shows that both $Y_1$ and $Y_2$ are singular along the curve $C$.

Let now \holom{\sigma}{X'}{X} be the blow-up of $X$ along $C$. Since $X$ is smooth along $C$ we have
$$
K_{X'} = \sigma^* K_X + 2 E,
$$
where $E$ is the exceptional divisor. Moreover since $Y_1$ and $Y_2$ are singular along $C$ we have
$$
\sigma^* Y_i = Y_i' + a_i E
$$
with $a_i \geq 2$ for all $i=1,2$.
Thus the pair $(X, Y_1+Y_2)$ is not log-canonical,
a contradiction.

{\em b) $\dim X$ arbitrary.}
By Theorem \ref{theoremfloris} there exists a sequence
\[
X \supsetneq Z_1 \supsetneq  Z_2 \supsetneq \ldots \supsetneq Z_{n-3}
\]
such that $Z_{i+1} \in |H|_{Z_i}|$ is a normal variety of dimension $n-i$ with at most canonical singularities.
In particular $Z_{n-3}$ is a Calabi-Yau threefold and arguing inductively we
obtain surjective maps
$$
H^0(X, H) \rightarrow H^0(Z_i, \sO_{Z_i}(H)) \qquad \forall \ i=1, \ldots, n-3.
$$
Thus $Z_{n-3}$ is obtained by intersecting $n-3$ general elements $Y_1, \ldots Y_{n-3} \in |H|$
and a general element in $|H|_{Z_{n-3}} |$ is obtained by intersecting $Z_{n-3}$
with another general element $Y_{n-2} \in |H|$.
By \cite[Prop.4.2]{Kaw00} we see that for
$$
D:=Z_{n-3} \cap Y_{n-2} \subset |H|_{Z_{n-3}}|
$$
general, the pair $(Z_{n-3}, D)$ is log-canonical.
In particular $D$ is a reduced surface, so the singular locus of $D$ has dimension at most one.
Since $D$ is a complete intersection cut out by the divisors $Y_1, \ldots, Y_{n-2}$, we have
$$
Y_{i,\sing} \subset D_{\sing} \qquad \forall \ i=1, \ldots, n-2.
$$
Moreover by repeated use of inversion of adjunction the pair $(X, \sum_{i=1}^{n-2} Y_i)$ is log-canonical near $Z_{n-3}$.

We can now conclude as above : if a general element of $|H|$ is singular along a curve, we
can use a general element $Y'$ in the pencil $<Y_{n-3}, Y_{n-2}> \subset |H|$ to show that there exists a curve $C \subset D$
such that both $Y_{n-3}$ and $Y_{n-2}$ are singular along $C$. In particular the divisor $\sum_{i=1}^{n-2} Y_i$ has multiplicity $n$
along $C$, contradicting the log-canonicity of the pair $(X, \sum_{i=1}^{n-2} Y_i)$.
\end{proof}

\begin{proof}[Proof of Theorem \ref{theoremfourfolds}]
By Theorem \ref{theoclasses} and Theorem \ref{theoremisolated}  we are left to show that $h^0(X, H) \geq n-2$ when $n\leq 5$ and $-K_X=(n-3)H$.
This is established \cite[Thm.1.1]{Flo10} for $\dim X=4,\,5$; for the convenience of the reader we prove the fourfold case:

By Kodaira vanishing we have $h^0(X, -K_X)=\chi(X, -K_X)$,
so we know by the Riemann-Roch formula that
\[
h^0(X, -K_X)= \frac{1}{6} (-K_X)^4 + \frac{1}{12} (-K_X)^2 \cdot c_2(X) + \chi(X,  \sO_X).
\]
By a recent result of Peternell \cite[Thm.1.4]{Pet08} the tangent bundle of a Fano manifold
is generically ample. Thus we can apply a theorem of Miyaoka \cite[Thm.6.1]{Miy87}
to see that the intersection product  $(-K_X)^2 \cdot c_2(X)$ is non-negative. Since $\chi(X,  \sO_X)=1$
for any Fano manifold this implies that $h^0(X, -K_X) \geq 2.$
\end{proof}

\begin{proof}[Proof of Theorem \ref{theoremshokurov}] \label{proofshokurov}
Using Reid's method (\cite[0.5]{Rei83}, cf. also \cite[4]{Flo10}) one sees that a general element $Y \in |-K_X|$
is a K3 surface with at most canonical singularities.
The proof of \cite[Prop.4.2]{Kaw00} applies verbatim to K3 surfaces, so if $A$ is an ample Cartier divisor
on a K3 surface $Y$ with at most canonical singularities and $D \in |A|$ a general element, then $(Y, D)$
is log-canonical.

Choose now
$Y_1$ and $Y_2$ general elements in $|-K_X|$ such that the pair $$(Y_1, D:=Y_1 \cap Y_2)$$ is log-canonical.
By inversion of adjunction this implies that the pair $(X, Y_1+Y_2)$ is log-canonical near $Y_1$.
Since $Y_1 \cap Y_2$ is a reduced curve, its singular locus is a union of points.
Thus if a general element in $|-K_X|$ is singular, we can argue as in
the proof of Theorem \ref{theoremisolated} to see that there exists a point $p \in D_{\sing}$
such that both $Y_1$ and $Y_2$ are singular in $p$.
But this implies that the divisor $Y_1+Y_2$ has multiplicity at least four in $p$.
Since $X$ is a smooth threefold, the pair $(X, Y_1+Y_2)$ is not log-canonical in $p$, a contradiction.
\end{proof}

\subsection{Consequences of the canonical isolated singularities property}

We will need later on  the following vanishing results (Lemma \ref{annulationdecoh}
and Lemma \ref{annulationdeforme}): Let $Y$ be a Gorenstein projective threefold
with isolated canonical singularities, such that
$H^2({Y},\mathcal{O}_{{Y}})=0$. Let $H$ be an ample line bundle on $Y$.
\begin{lemma} \label{annulationdecoh}
For any desingularization $\tau:\widetilde{Y}\rightarrow Y$ of $Y$, one has
$$H^2(\widetilde{Y},\mathcal{O}_{\widetilde{Y}})=\{0\}$$ and
$H^0(\widetilde{Y},\Omega_{\widetilde{Y}}^2)=\{0\}$.
\end{lemma}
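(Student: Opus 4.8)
The plan is to exploit the fact that $Y$ has only isolated canonical singularities, so that a resolution is an isomorphism away from finitely many points, and then to compare cohomology on $Y$ and on $\widetilde Y$ via the Leray spectral sequence for $\tau$. First I would recall that canonical (in fact, rational) singularities satisfy $R^i\tau_*\sO_{\widetilde Y}=0$ for $i>0$; this is automatic for canonical Gorenstein threefold singularities, but in any case follows once one knows the singularities are rational, which holds for canonical singularities by Elkik--Flenner. Hence $\tau_*\sO_{\widetilde Y}=\sO_Y$ and the Leray spectral sequence degenerates to give $H^i(\widetilde Y,\sO_{\widetilde Y})\cong H^i(Y,\sO_Y)$ for all $i$. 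In particular the hypothesis $H^2(Y,\sO_Y)=0$ yields $H^2(\widetilde Y,\sO_{\widetilde Y})=0$ immediately.

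For the vanishing of $H^0(\widetilde Y,\Omega^2_{\widetilde Y})$ I would argue by Hodge symmetry: since $\widetilde Y$ is a smooth projective threefold, $H^0(\widetilde Y,\Omega^2_{\widetilde Y})\cong \overline{H^2(\widetilde Y,\sO_{\widetilde Y})}$ (complex conjugation on $H^2(\widetilde Y,\bC)$ interchanges the Hodge pieces $H^{0,2}$ and $H^{2,0}$), so the first part already forces $h^{2,0}(\widetilde Y)=h^{0,2}(\widetilde Y)=0$. Alternatively, and perhaps more robustly, one can observe that a holomorphic $2$-form on $\widetilde Y$ restricted to the complement of the exceptional locus descends to a $2$-form on the smooth part of $Y$, and since $Y$ has canonical (hence in particular rational, normal, Cohen--Macaulay) singularities of codimension $3$, such a form extends across the singular points; this identifies $H^0(\widetilde Y,\Omega^2_{\widetilde Y})$ with a subspace of reflexive $2$-forms on $Y$ that is independent of the resolution, and then Hodge theory on $Y$'s mixed Hodge structure pins it down. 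I would present the Hodge-symmetry argument as the clean one.

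I do not expect a serious obstacle here: the only point requiring care is the rationality of the singularities (to get $R^i\tau_*\sO_{\widetilde Y}=0$), and this is a standard consequence of the canonical hypothesis, valid in the Gorenstein threefold case we need. Everything else is formal manipulation of the Leray spectral sequence together with Hodge symmetry on the smooth model $\widetilde Y$. If one wanted to avoid invoking rationality as a black box, the Gorenstein canonical threefold case can be handled directly, but for the purposes of this paper citing the rationality of canonical singularities is the most economical route.
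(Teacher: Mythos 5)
Your proof is correct and follows exactly the paper's own argument: canonical singularities are rational, so $H^2(\widetilde{Y},\mathcal{O}_{\widetilde{Y}})=H^2(Y,\mathcal{O}_Y)=0$, and Hodge symmetry on the smooth projective $\widetilde{Y}$ then kills $H^0(\widetilde{Y},\Omega^2_{\widetilde{Y}})$. The extra remarks about extending reflexive forms are not needed, but the main line of reasoning matches the paper.
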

\begin{proof}
 Indeed, the singularities of $Y$ are canonical,
hence rational. Thus $$H^2(\widetilde{Y},\mathcal{O}_{\widetilde{Y}})=H^2({Y},\mathcal{O}_{{Y}})=0.$$
 By Hodge symmetry on $\widetilde{Y}$,
the vanishing of $H^2(\widetilde{Y},\mathcal{O}_{\widetilde{Y}})$
 implies the vanishing of $H^0(\widetilde{Y},\Omega_{\widetilde{Y}}^2)$.
 \end{proof}
 We will need in the next section
 a stronger form of this vanishing result.
\begin{lemma} \label{annulationdeforme} Under the same assumptions on $Y$, for any sufficiently large and divisible $n$, and any   $\Sigma\in |nH|$ contained in $Y_{reg}$, one has
\begin{eqnarray}\label{vanishingsectionssursigma} H^0(\Sigma,{\Omega_Y^2}_{\mid \Sigma})=\{0\}.
\end{eqnarray}

\end{lemma}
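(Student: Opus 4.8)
The plan is to relate sections of $\Omega_Y^2$ restricted to $\Sigma$ to global objects on a desingularization, using Lemma \ref{annulationdecoh} as the starting point. Let $\tau:\widetilde Y\to Y$ be a resolution of singularities which is an isomorphism over $Y_{reg}$ (this is possible since the singularities are isolated). Since $\Sigma\subset Y_{reg}$, we may regard $\Sigma$ as a smooth surface sitting inside $\widetilde Y$, and ${\Omega_Y^2}_{\mid\Sigma}={\Omega_{\widetilde Y}^2}_{\mid\Sigma}$. So it suffices to show $H^0(\Sigma,{\Omega_{\widetilde Y}^2}_{\mid\Sigma})=0$ for $\Sigma\in|nH|$ with $n\gg0$ and divisible enough, where we also pull $H$ back to $\widetilde Y$ (note $\Sigma$ avoids the exceptional locus, so $\Sigma$ is still an effective ample-type divisor class on $\widetilde Y$ — more precisely $\tau^*H$ is big and nef, and $\Sigma\in|n\tau^*H|$ away from the exceptional set).

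The key step is a twisted Kodaira–Akizuki–Nakano / Serre vanishing argument on $\widetilde Y$. Consider the conormal exact sequence for $\Sigma\subset\widetilde Y$:
$$
0\to N^\vee_{\Sigma/\widetilde Y}\to {\Omega_{\widetilde Y}^1}_{\mid\Sigma}\to \Omega_\Sigma^1\to 0,
$$
and take $\wedge^2$ to get a filtration of ${\Omega_{\widetilde Y}^2}_{\mid\Sigma}$ with graded pieces $\Omega_\Sigma^2$ and $\Omega_\Sigma^1\otimes N^\vee_{\Sigma/\widetilde Y}$. Since $N_{\Sigma/\widetilde Y}=\mathcal O_\Sigma(nH)$ is very positive for $n$ large, $N^\vee_{\Sigma/\widetilde Y}$ is very negative, so $H^0(\Sigma,\Omega_\Sigma^1\otimes N^\vee_{\Sigma/\widetilde Y})=0$ by Serre duality (it is dual to $H^2(\Sigma,\Omega_\Sigma^1\otimes\mathcal O_\Sigma(nH))$... actually one wants $H^0$ of a sheaf that is a subsheaf of sections of a negative bundle, hence zero directly for $n$ large). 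This reduces us to showing $H^0(\Sigma,\Omega_\Sigma^2)=H^0(\Sigma,K_\Sigma)=0$. But by adjunction $K_\Sigma=(K_{\widetilde Y}+\Sigma)_{\mid\Sigma}=(K_{\widetilde Y}+n\tau^*H)_{\mid\Sigma}$; alternatively, and more robustly, use the restriction sequence
$$
0\to \Omega_{\widetilde Y}^2(-\Sigma)\to \Omega_{\widetilde Y}^2\to {\Omega_{\widetilde Y}^2}_{\mid\Sigma}\to 0
$$
twisted appropriately, so that $H^0(\Sigma,{\Omega_{\widetilde Y}^2}_{\mid\Sigma})$ is squeezed between $H^0(\widetilde Y,\Omega_{\widetilde Y}^2)$ and $H^1(\widetilde Y,\Omega_{\widetilde Y}^2(-\Sigma))=H^1(\widetilde Y,\Omega_{\widetilde Y}^2\otimes\mathcal O_{\widetilde Y}(-n\tau^*H))$.

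The first term $H^0(\widetilde Y,\Omega_{\widetilde Y}^2)$ vanishes by Lemma \ref{annulationdecoh}. The main obstacle is the middle/error term $H^1(\widetilde Y,\Omega_{\widetilde Y}^2(-n\tau^*H))$: since $\tau^*H$ is only big and nef (not ample) on $\widetilde Y$, ordinary Serre vanishing does not immediately apply, and one must be careful. I would handle this by Serre duality, rewriting it as $H^2(\widetilde Y,\Omega_{\widetilde Y}^1\otimes\mathcal O_{\widetilde Y}(n\tau^*H))^\vee$ (using $\Omega_{\widetilde Y}^2\cong\Omega_{\widetilde Y}^1\otimes K_{\widetilde Y}$ and Serre duality on the threefold $\widetilde Y$), and then invoking a vanishing theorem of Kawamata–Viehweg type for the big and nef class $\tau^*H$ — or, cleaner, doing the whole computation downstairs on $Y$ using that $H$ is genuinely ample there, replacing $\Omega^\bullet_{\widetilde Y}$ by reflexive differentials $\Omega^{[\bullet]}_Y$ and invoking the fact that for canonical (hence rational, hence Du~Bois) Gorenstein singularities the relevant comparison maps behave well, so that Serre vanishing $H^i(Y,\Omega_Y^{[2]}\otimes H^{-n})=0$ for $i<3$, $n\gg0$ applies directly. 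Either way, once this one cohomology group is killed for $n$ sufficiently large and divisible, the sequence above forces $H^0(\Sigma,{\Omega_Y^2}_{\mid\Sigma})=0$, which is \eqref{vanishingsectionssursigma}.
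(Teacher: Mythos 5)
Your proposal correctly isolates the central difficulty -- on the resolution $\widetilde Y$ the class $\tau^*H$ is only big and nef, so naive Serre/Akizuki--Nakano vanishing for $H^1(\widetilde Y,\Omega^2_{\widetilde Y}(-n\tau^*H))$ is unavailable -- but neither of your two proposed ways around it is carried through, and the first one fails outright. Akizuki--Nakano type vanishing for $\Omega^p\otimes L$ with $p<\dim$ and $L$ merely big and nef (or semiample) is false in general; Ramanujam's counterexamples live precisely on blow-ups, and the refined versions of the statement (vanishing for $p+q>\dim+e$, where $e$ is the maximal fiber dimension of the associated birational morphism) give nothing here since the exceptional divisors over the isolated singular points are $2$-dimensional. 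So ``invoking a vanishing theorem of Kawamata--Viehweg type for the big and nef class $\tau^*H$'' is not a legitimate step. (A separate, smaller error: your intermediate claim that one is reduced to $H^0(\Sigma,K_\Sigma)=0$ cannot be right, since $K_\Sigma=nH|_\Sigma$ is ample and has many sections; the conormal-filtration route is a dead end for exactly this reason.)

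The deeper omission is that your argument never uses the hypothesis that the singularities are \emph{canonical} in any precise way, whereas the lemma is genuinely false without it: the paper's example of a cone over a quintic surface has isolated Gorenstein non-canonical singularities and the conclusion fails there. The missing ingredient is the extension theorem for reflexive differential forms on spaces with (log) canonical singularities \cite[Thm.1.4]{GKKP}. The paper's proof restores ampleness by twisting with an effective exceptional divisor $E$ such that $-E$ is $\tau$-ample: Serre vanishing for the ample class $n_0\tau^*H-E$ gives $H^1(\widetilde Y,\Omega^2_{\widetilde Y}(kE-kn_0\tau^*H))=0$, hence surjectivity of $H^0(\widetilde Y,\Omega^2_{\widetilde Y}(kE))\to H^0(\Sigma,\Omega^2_{\widetilde Y}(kE)|_\Sigma)$, and then GKKP is used to show that the allowed poles along $E$ are illusory, i.e.\ $H^0(\widetilde Y,\Omega^2_{\widetilde Y}(kE))=H^0(\widetilde Y,\Omega^2_{\widetilde Y})=0$ by Lemma \ref{annulationdecoh}. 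Your ``downstairs'' variant with $\Omega^{[2]}_Y$ can be made to work -- reflexivity gives depth $\geq 2$, hence $H^1(Y,\Omega^{[2]}_Y(-nH))=0$ for $n\gg0$ by the Enriques--Severi--Zariski lemma -- but the remaining term $H^0(Y,\Omega^{[2]}_Y)=H^0(Y_{reg},\Omega^2)$ is only known to vanish because the extension theorem identifies it with $H^0(\widetilde Y,\Omega^2_{\widetilde Y})$; ``canonical hence rational hence Du Bois'' does not supply this, and Lemma \ref{annulationdecoh} alone only controls forms that already extend to the resolution. Until you pin down this step, the proof has a genuine gap.
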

\begin{proof} Let $E$ be an  effective divisor supported on the exceptional
divisor of $\tau$, such that $-E$ is $\tau$-ample.
 Note that for a smooth surface $\Sigma\in |nH|$ contained in $Y_{reg}$, $\Sigma$ can be seen
 as a surface in the linear system $|\tau^*nH|$ on  $\widetilde{Y}$ which does not intersect $E$ and
it is equivalent to show
 that for some integer $l$, one has $H^0(\Sigma,{\Omega_{\widetilde{Y}}^2(lE)}_{\mid \Sigma})=\{0\}$.
 As $-E$ is $\tau$-ample,  for $n_0$ large
enough, $n_0\tau^*H(-E)$ is ample, and thus,  by vanishing on $\widetilde{Y}$,
 we have for any $k\gg 0$
 \begin{eqnarray}\label{autrevan} H^1(\widetilde{Y}, \Omega_{\widetilde{Y}}^2(kE-kn_0\tau^*H))=0.
 \end{eqnarray}

For $n=kn_0$, let $\Sigma\in |nH|$ be contained in $Y_{reg}$, or equivalently $\Sigma\subset \widetilde{Y}$,
$\Sigma\in |\tau^*nH|$, not intersecting $E$.
 The vanishing (\ref{autrevan}) then implies that the restriction map
 $$H^0(\widetilde{Y},\Omega_{\widetilde{Y}}^2(kE))\rightarrow H^0(\Sigma,{\Omega_{\widetilde{Y}}^2(kE)}_{\mid \Sigma})$$
 is surjective.

 We now use \cite[Thm.1.4]{GKKP} which implies that
 $$H^0(\widetilde{Y},\Omega_{\widetilde{Y}}^2(kE))=H^0(\widetilde{Y},\Omega_{\widetilde{Y}}^2)$$
 for $k\geq0$, because the singularities of $Y$ are canonical.
 By Lemma \ref{annulationdecoh}, we know that the right hand side is $0$, which concludes
 the proof.
\end{proof}

\section{Application to curve classes}

This section is devoted to the proof of Theorem
 \ref{theoclasses}.  The statement  will first  be
 reduced to the following Proposition \ref{secondprop},
 a variant of Proposition 1 in \cite{voisinuniruled},  which
establishes this proposition in the case where $Y$ is  smooth. We assume here that
 $Y$ is  a Gorenstein projective threefold with trivial canonical bundle
and  canonical isolated  singularities, such that
 $H^1({Y},\mathcal{O}_{{Y}})=H^2({Y},\mathcal{O}_{{Y}})=0$. Let $H$ be an ample line bundle on $Y$.

\begin{proposition} \label{secondprop}  Under these assumptions,  there exists an integer $n_0$
such that for any multiple $n$ of $n_0$, there is a smooth
 surface $\Sigma\subset Y$ in the linear system $|nH|$ with the following property:
 The integral cohomology $H^2(\Sigma,\mathbb{Z})$ is generated over $\mathbb{Z}$ by
 classes $\beta_i$ which become algebraic on some small deformation $\Sigma_i$ of $\Sigma$ in $Y$.
\end{proposition}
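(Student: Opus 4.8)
The strategy is to adapt the argument of Proposition 1 in \cite{voisinuniruled}, which handles the case of a smooth threefold $Y$, to the present setting of a Gorenstein threefold $Y$ with trivial canonical bundle and isolated canonical singularities. The starting point is the observation that, because the singularities of $Y$ are isolated, a general surface $\Sigma$ in $|nH|$ (for $n$ large and divisible) is smooth and avoids $Y_{\mathrm{sing}}$; thus it sits inside the smooth locus $Y_{\mathrm{reg}}$ and also, via the chosen resolution $\tau\colon\widetilde Y\to Y$ of Lemma \ref{annulationdeforme}, inside $\widetilde Y$ as a smooth member of $|\tau^*nH|$ disjoint from the exceptional locus. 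The key input that replaces ``$K_Y$ trivial, $Y$ smooth'' in the original argument is the vanishing $H^0(\Sigma,{\Omega_Y^2}_{|\Sigma})=0$ of Lemma \ref{annulationdeforme}, together with $H^1(Y,\mathcal O_Y)=H^2(Y,\mathcal O_Y)=0$.

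**Key steps.** First I would set up the infinitesimal variation of Hodge structure (IVHS) of the family of surfaces $\Sigma_t\in|nH|$ in $Y$ (or in $Y_{\mathrm{reg}}$, which is all that matters since the general member avoids the singular points). For $\Sigma$ smooth and sufficiently positive, $H^2(\Sigma,\mathcal O_\Sigma)$ and $H^{2,0}(\Sigma)=H^0(\Sigma,\Omega^2_\Sigma)$ are computed by Kodaira vanishing and the adjunction/conormal sequence $0\to N^*_{\Sigma/Y}\to{\Omega^1_Y}_{|\Sigma}\to\Omega^1_\Sigma\to 0$. A class $\beta\in H^2(\Sigma,\mathbb Z)$ becomes algebraic on a first-order deformation $\Sigma_t$ precisely when its image under the composite of cup-product with the Kodaira--Spencer class and the projection to $H^2(\Sigma,\mathcal O_\Sigma)$ vanishes; equivalently, pairing against $H^{2,0}(\Sigma)$, the obstruction lives in $\mathrm{Hom}(H^0(\Sigma,\Omega^2_\Sigma),H^1(\Sigma,\mathcal O_\Sigma)^\vee\otimes\ldots)$. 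The crucial point is to show that the ``fixed part'' $H^{2,0}_{\mathrm{fix}}$ — the sub-Hodge-structure of $H^2(\Sigma,\mathbb C)$ that remains of type $(2,0)$ along the whole family in $Y$ — comes from $H^0(\widetilde Y,\Omega^2_{\widetilde Y})$ restricted to $\Sigma$, and this is $0$ by Lemmas \ref{annulationdecoh} and \ref{annulationdeforme}. Then, by a Hodge-theoretic argument exactly as in \cite{voisinuniruled} (using that a very general deformation has Picard number governed by the fixed part, so that the Noether--Lefschetz loci are dense for the classes orthogonal to no nonzero fixed $(2,0)$-form), every integral class is a $\mathbb Z$-combination of classes $\beta_i$ each of which is algebraic on some member $\Sigma_i$ of the family — here one also uses $H^1(Y,\mathcal O_Y)=0$ to ensure that $|nH|_Y$ restricts surjectively and that the relevant cohomology of $Y$ does not interfere, and Nori-type connectivity or Lefschetz to control $H^2(\Sigma,\mathbb Z)$ in terms of the variable classes. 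One must be slightly careful that ``small deformation of $\Sigma$ in $Y$'' means within $|nH|$ on $Y$ itself, not on $\widetilde Y$; since the general such $\Sigma$ misses $Y_{\mathrm{sing}}$, deformations in $|nH|_Y$ and in $|\tau^*nH|_{\widetilde Y}$ agree near $\Sigma$, so the passage between $Y$ and $\widetilde Y$ is harmless.

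**Main obstacle.** The delicate point is proving that the fixed $(2,0)$-part of the VHS of $\{\Sigma_t\}\subset Y$ vanishes, i.e. that no nonzero holomorphic $2$-form on a general $\Sigma$ extends holomorphically (in the appropriate sense) over the family. In the smooth Calabi--Yau case of \cite{voisinuniruled} this is immediate because such a form would yield a section of $\Omega^2_Y$ (using $K_Y=\mathcal O_Y$ and the conormal sequence), and $H^0(Y,\Omega^2_Y)=0$. Here $Y$ is singular, so one cannot directly restrict $2$-forms from $Y$; this is exactly the role of Lemma \ref{annulationdeforme}, which provides $H^0(\Sigma,{\Omega^2_Y}_{|\Sigma})=0$ via the GKKP extension theorem \cite{GKKP} on the resolution. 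I would therefore organize the proof so that all the geometry happens on $\widetilde Y$ with the divisor $\Sigma\subset\widetilde Y\smallsetminus E$, invoke Lemma \ref{annulationdeforme} to kill the candidate fixed forms, and then the remaining VHS/Noether--Lefschetz density argument is identical to that of \cite[Prop.~1]{voisinuniruled}. The secondary technical nuisance — that $\Sigma$ should be both smooth and disjoint from $Y_{\mathrm{sing}}$, and that $n$ must be simultaneously large enough for all the vanishings — is handled by taking $n_0$ divisible and large, and $n$ a multiple of $n_0$, as in the statement.
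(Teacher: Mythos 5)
You have correctly identified the overall framework (adapting \cite{voisinuniruled}), the fact that isolated singularities let a general $\Sigma\in|nH|$ avoid $Y_{\sing}$, and the key new vanishing $H^0(\Sigma,{\Omega^2_Y}_{|\Sigma})=0$ supplied by Lemma \ref{annulationdeforme} via \cite{GKKP}. But the mechanism you build around that vanishing is not the right one, and as sketched it does not prove the statement. You reduce everything to the vanishing of the fixed $(2,0)$-part of the variation of Hodge structure and then assert that Noether--Lefschetz density, and hence generation of $H^2(\Sigma,\mathbb{Z})$ by classes algebraic on nearby fibers, follows. Neither implication holds. Vanishing of the fixed part is necessary (the quintic-cone example in the paper is exactly a case where it fails) but far from sufficient: density of the Noether--Lefschetz locus requires the \emph{infinitesimal} criterion that $\overline{\nabla}(\lambda):H^0(\Sigma,\sO_\Sigma(nH))\to H^2(\Sigma,\sO_\Sigma)$ be surjective for some $\lambda\in H^1(\Sigma,\Omega_\Sigma)$ (Lemma \ref{submersionhodge}), and generation \emph{over $\mathbb{Z}$} requires more still, namely the cone argument: the image of the real period map $\Phi_{\mathbb{R}}$ on $F^1H^2_{\mathbb{R}}$ is a cone, and one needs it to have non-empty interior so that its integral points generate the lattice. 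Your proposal mentions neither of these reductions.

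The surjectivity of the generic $\overline{\nabla}(\lambda)$ (Proposition \ref{propIVHS}) is where essentially all the work lies, and it is precisely the part whose validity in the singular setting cannot be taken for granted: one must check that the system of quadrics $Q_\Sigma$ generically contains a smooth member, via the bound on the base locus through nodal discriminant degenerations (Lemma \ref{lebaselocus}), the rank stratification estimate via Fermat-type covers (Lemma \ref{rangcn2}), and Green's theorem on multiplication by linear systems of small codimension (Lemma \ref{lelast}). The GKKP-based vanishing you single out enters at this last step, through the surjectivity of the map $\delta:H^0(\Sigma,K_\Sigma(2nH))\to H^2(\Sigma,\sO_\Sigma)$ (Lemma \ref{surjH2}), whose cokernel is Serre-dual to $H^0(\Sigma,{\Omega^2_Y}_{|\Sigma})$ --- not through any ``fixed part'' computation. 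So while you have located the correct new ingredient, you have attached it to the wrong point of the argument and omitted the two reductions (cone/lattice generation, and the infinitesimal surjectivity with its quadric-theoretic proof) that actually carry the proof.
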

In this statement,  a small deformation $\Sigma_i$ of $\Sigma$ in
 $X$  is a surface parameterized by a point in a small
ball in $|nH|$ centered at the point parameterizing $\Sigma$. The family of surfaces
parameterized by this ball is then topologically trivial, which makes the parallel transport of $\beta_i$ to a cohomology class
on $\Sigma_i$  canonical.

We postpone the proof of  this proposition, and prove now Theorem
\ref{theoclasses}.
\begin{proof}[Proof of Theorem \ref{theoclasses}]
Let $Y\subset X$ be as in the statement of Theorem \ref{theoclasses}. $Y$ is a local complete
intersection and  the canonical bundle $K_Y$ is trivial by adjunction, using the equality
${\rm det}\,\mathcal{E}=-K_X$.
As $X$ is rationally connected, we have $H^q(X,\mathcal{O}_X)=0$ for $q\geq 1$.
As $-K_X$ is $1$-ample, it follows that
\begin{eqnarray}\label{van1}H^1(Y,\mathcal{O}_Y)=0,
\end{eqnarray}
using the Sommese vanishing theorem \cite[Proposition 1.14]{sommese}  and the Koszul resolution of $\mathcal{I}_Y$.
As $Y$ has trivial canonical bundle,  we then also have
\begin{eqnarray}\label{van2}H^2(Y,\mathcal{O}_Y)=0\end{eqnarray}
 by Serre's duality.

Let $H_X$ be an ample line bundle on $X$, and $H$ its restriction to $Y$.
  Let $n$ be chosen  in such a way that
the conclusion of Proposition \ref{secondprop} holds for the pair $(Y,H)$.
Let  $Z$ be a general member of $|nH_X|$.
Consider  the complete intersection surface
$$\Sigma:=Z\cap Y\stackrel{j}{\hookrightarrow} X.$$

We claim that the Gysin
map
$$j_*:H^2(\Sigma,\mathbb{Z})=H_2(\Sigma,\mathbb{Z})\rightarrow H_2(X,\mathbb{Z} )= H^{2n-2}(X,\mathbb{Z})$$
is surjective.  Indeed, we first make the remark
that the restriction to a general sufficiently ample  hypersurface of $X$ of a $1$-ample
vector bundle  on $X$ is ample. Thus we may assume that the chosen hypersurface $Z$ is
general and that $\mathcal{E}_{\mid Z}$ is ample of rank $n-3$.
But the surface
$\Sigma\subset Z$ is the zero locus of a section
of the ample vector bundle $\mathcal{E}_{\mid Z}$ of
rank $n-3$ on $Z$, and ${\rm dim}\,Z=n-1$;  Sommese's theorem \cite[Proposition 1.16]{sommese}  extending
 Lefschetz hyperplane section theorem
 thus implies that the natural map
 $$j'_*:H^2(\Sigma,\mathbb{Z})=H_2(\Sigma,\mathbb{Z})\rightarrow H_2(Z,\mathbb{Z} )$$
 is surjective,
 where $j'$ is the inclusion of
 $\Sigma$ in $Z$.
We finally apply the Lefschetz theorem on hyperplane sections to the inclusion
$k:Z\hookrightarrow  X$
to conclude that the Gysin map
$k_*:H_{2}(Z,\mathbb{Z})\rightarrow H_{2}(X,\mathbb{Z})$ is also surjective. Hence $j_*=k_*\circ j'_*:H_2(\Sigma,\mathbb{Z})\rightarrow H_2(X,\mathbb{Z} )$ is surjective, which  proves the claim.

Let $\Sigma=Z\cap Y$ be as above
and let $\beta \in H^{2n-2}(X,\mathbb{Z})$. Then $\beta=j_*\alpha$, for some
$\alpha\in H^2(\Sigma,\mathbb{Z})$.
$Y$ has by assumption isolated canonical singularities, and the needed vanishings
(\ref{van1}) and (\ref{van2}) hold. Thus we can apply Proposition
\ref{secondprop}.
We thus can write $\beta=\sum_i\beta_i$, where
$\beta_i$ has the property that its parallel transport $\beta'_i\in H^2(\Sigma_{i},\mathbb{Z})$ is
the class $[D_i]$ of a divisor $D_i$ on $\Sigma_i$, where $j_i:\Sigma_{i}\hookrightarrow  X$ is a small deformation of $\Sigma$.
We then have
$$j_*\beta_i=j_{i*}\beta'_i=j_{i*}([D_i])$$
and thus the class $\beta$ can be written as $\sum_ij_{i*}([D_i])$, which is the class of the
$1$-cycle $\sum_ij_{i*}(D_i)$ of $X$.
\end{proof}

It remains to prove Proposition \ref{secondprop}.
As already mentioned, the case where $Y$ is smooth is done in \cite{voisinuniruled}. It is based on the study
of the infinitesimal variation of the Hodge structure on $H^2(\Sigma,\mathbb{Z})$ and is in some sense
purely local. However,
we need to point out carefully the places where we use the assumptions on the singularities
of  $Y$, as the proposition is wrong if we only assume that $H^i(Y,\mathcal{O}_Y),\,i=1,\,2$
and $Y$ is Gorenstein with  trivial canonical bundle. The simplest example is the case where
$Y$ is a quintic hypersurface in $\mathbb{P}^4$ which is a cone over a smooth quintic surface
$S$ in
$\mathbb{P}^3$. Any smooth surface $\Sigma\subset Y_{reg}$ dominates $S$
via the projection $p$ from the vertex of the cone, and only those
cohomology classes $\beta$ of degree $2$ on $\Sigma$ which satisfy the property
that $p_*\beta$ is of type $(1,1)$ on $S$ can become algebraic on a small deformation
of $\Sigma$ in $Y$. Thus these classes do not generate $H^2(\Sigma,\mathbb{Z})$ since
the Hodge structure on $H^2(S,\mathbb{Z})$ is non trivial. In this example, the singularities of $Y$ are isolated but not canonical and in fact the vanishing $H^2(\widetilde{Y},\mathcal{O}_{\widetilde{Y}})=0$ of
Lemma \ref{annulationdecoh} does not hold.

For the convenience of the reader, we first summarize the strategy of the proof
of Proposition \ref{secondprop},
used in \cite{voisinuniruled}, which mainly consists to a reduction to Proposition \ref{propIVHS} below.

Let $\Sigma\in |nH|$, and let us choose a small ball $U\subset |nH|$ centered at the point
$0$ parameterizing $\Sigma$. Let
$$\pi:\mathcal{S}\rightarrow U,\,\mathcal{S}\subset U\times Y,$$
be the restriction to $U$ of the universal family.
This is a smooth projective map, and there is a variation of Hodge structures
on the local system $R^2\pi_*\mathbb{Z}$, which is trivial since $U$ is simply connected.

Let $\mathcal{H}^2$ be the holomorphic vector bundle
 $R^2\pi_*\mathbb{Z}\otimes\mathcal{O}_U$. It is endowed with the Gauss-Manin connection
 $\nabla:\mathcal{H}^2\rightarrow\mathcal{H}^2\otimes\Omega_U$, and the Hodge filtration by holomorphic
 subbundles $F^i\mathcal{H}^2$. Furthermore, Griffiths transversality holds (cf. \cite[I, 10.1.2]{voisinbook}):
 $$\nabla(F^i\mathcal{H}^2)\subset F^{i-1}\mathcal{H}^2\otimes\Omega_U.$$

Denote by $F^1H^2$ the total space of  $F^1\mathcal{H}^2$. The trivialisation of the local system
$R^2\pi_*\mathbb{Z}$ on $U$ gives us a holomorphic map
$$\Phi: F^1H^2\rightarrow H^2(\Sigma,\mathbb{C})$$
which to a class $\alpha\in F^1H^2(\mathcal{S}_t)$ associates its parallel transport to
$\mathcal{S}_0=\Sigma$.

We make four observations :

a) First of all, for the proof of \ref{secondprop}, we can work with cohomology
$H^2(\Sigma,\mathbb{Z})/{\rm torsion}$, because the torsion is made of classes of divisors.

b) Secondly,
the set of classes $\beta_t\in H^2(\Sigma,\mathbb{Z})/{\rm torsion}$ which become algebraic (or equivalently
of type
$(1,1)$) on  $\mathcal{S}_t$ for some $t\in U$ identifies (via the inclusion of
$H^2(\Sigma,\mathbb{Z})/{\rm torsion}$ in $H^2(\Sigma,\mathbb{C})$) to
the intersection of the image of $\Phi$ with $H^2(\Sigma,\mathbb{Z})/{\rm torsion}$.

c)    Consider the real part $F^1H^2_\mathbb{R}$
of $F^1H^2$, namely its intersection
with the real vector bundle $H^2_\mathbb{R}$ with fiber $H^2(\mathcal{S}_t,\mathbb{R})$
at $t\in U$.  We clearly have $F^1H^2_\mathbb{R}=\Phi^{-1}(H^2(\Sigma,\mathbb{R}))$.

d) Let us denote by $\Phi_\mathbb{R}$ the  restriction of  $\Phi$
to $F^1H^2_\mathbb{R}$. Then $Im\,\Phi_\mathbb{R}$ is a cone.

\vspace{0.5cm}

What we want to prove is that the lattice $H^2(\Sigma,\mathbb{Z})/{\rm torsion}$
is generated over $\mathbb{Z}$ by points in $Im\,\Phi_\mathbb{R}\cap H^2(\Sigma,\mathbb{Z})/{\rm torsion}$. We use Lemma 3
in \cite{voisinuniruled}, which says that given a lattice $L$, integral points of
a cone in $L_\mathbb{R}$ with non-empty interior set generate $L$ over $\mathbb{Z}$.
Using this and the above observation d), it suffices to prove that
$Im\,\Phi_\mathbb{R}$ has non-empty interior set and by observation c) and Sard's Lemma,
it  suffices to show that
$\Phi$ is a submersion at some real point. Finally,   as the set
of points where $\Phi$ is a submersion is Zariski open, it  suffices to show that
$\Phi$ is a submersion at some  point.

We are now reduced to a statement involving the infinitesimal variations of Hodge structures on
$H^2(\mathcal{S}_t)$ thanks to the following Lemma \ref{submersionhodge} (cf. \cite[II, 5.3.4]{voisinbook}):

Using transversality, the Gauss-Manin connection induces $\mathcal{O}_U$-linear maps
$$\overline{\nabla}: F^i\mathcal{H}^2/F^{i-1}\mathcal{H}^2\rightarrow F^{i-1}\mathcal{H}^2/F^{i}\mathcal{H}^2\otimes\Omega_U$$
whose fiber at $0\in U$ gives for $i=1$:
$$\overline{\nabla}:H^1(\Sigma,\Omega_\Sigma)\rightarrow Hom\,(T_{U,0},H^2(\Sigma,\mathcal{O}_\Sigma)),$$
where $T_{U,0}= H^0(\Sigma,nH_{\vert \Sigma})$.
We will use the same notation for the  map obtained for $i=2$:
$$\overline{\nabla}:H^0(\Sigma,K_\Sigma)\rightarrow Hom\,(T_{U,0},H^1(\Sigma,\Omega_\Sigma)),$$
\begin{lemma}\label{submersionhodge} Let $\tilde{\lambda}\in F^1H^2(\Sigma,\mathbb{C})$ project to
$\lambda\in H^1(\Sigma,\Omega_\Sigma)=F^1H^2(\Sigma,\mathbb{C})/F^2H^2(\Sigma,\mathbb{C})$.
Then $\Phi$ is a submersion at $\tilde{\lambda}$ if and only if $\overline{\nabla}(\lambda)$
is a surjective homomorphism from $T_{U,0}$ to $H^2(\Sigma,\mathcal{O}_\Sigma)$.
\end{lemma}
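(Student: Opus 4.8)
The plan is to identify the differential of $\Phi$ at a point $\tilde\lambda$ with a piece of the Gauss–Manin connection, and then read off surjectivity of the differential from surjectivity of $\overline\nabla(\lambda)$. First I would describe the tangent space to the total space $F^1H^2$ at a point $\tilde\lambda$ lying over $t=0$ and over the class $\lambda$. Since $F^1H^2$ is the total space of the vector bundle $F^1\mathcal H^2\to U$, its tangent space at $\tilde\lambda$ fits into an exact sequence
\begin{equation*}
0\to F^1\mathcal H^2_0\to T_{\tilde\lambda}(F^1H^2)\to T_{U,0}\to 0,
\end{equation*}
where the kernel is the ``vertical'' part (tangent to the fiber $F^1H^2(\Sigma)$). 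The map $\Phi$ restricted to a single fiber $F^1H^2(\mathcal S_t)\subset H^2(\mathcal S_t,\mathbb C)$ is just the parallel-transport isomorphism onto its image, hence its restriction to the vertical tangent space $F^1\mathcal H^2_0\subset H^2(\Sigma,\mathbb C)$ is the inclusion; in particular $\Phi$ is automatically a submersion in the ``fiber directions'' onto $F^1H^2(\Sigma)$, and the only issue is the horizontal directions, i.e.\ whether $d\Phi$ hits $H^2(\Sigma,\mathbb C)/F^1H^2(\Sigma)=H^2(\Sigma,\mathcal O_\Sigma)$.

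Next I would compute the horizontal part of $d\Phi$. Pick a tangent vector $u\in T_{U,0}$ and a local holomorphic lift of $\tilde\lambda$ to a section $\tilde\lambda(t)$ of $F^1\mathcal H^2$ with $\tilde\lambda(0)=\tilde\lambda$. Using the trivialization of $R^2\pi_*\mathbb Z$ (equivalently, the flat sections of $\nabla$), $\Phi(\tilde\lambda(t))$ is the parallel transport of $\tilde\lambda(t)$ to $\Sigma$, and its derivative in the direction $u$ is, by the very definition of the Gauss–Manin connection, $\nabla_u\tilde\lambda\in H^2(\Sigma,\mathbb C)$. Composing with the projection $H^2(\Sigma,\mathbb C)\to H^2(\Sigma,\mathcal O_\Sigma)=H^2/F^1$ and using Griffiths transversality $\nabla(F^1\mathcal H^2)\subset F^0\mathcal H^2\otimes\Omega_U$ together with the fact that the class of $\nabla_u\tilde\lambda$ modulo $F^1$ depends only on $\lambda=\tilde\lambda\bmod F^2$ (moving $\tilde\lambda$ within $F^2$ changes $\nabla_u\tilde\lambda$ by an element of $F^1$, by transversality applied to $F^2$), one identifies this composite with $\overline\nabla(\lambda)(u)\in H^2(\Sigma,\mathcal O_\Sigma)$. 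Therefore the image of $d\Phi$ modulo $F^1H^2(\Sigma)$ is exactly the image of the linear map $\overline\nabla(\lambda):T_{U,0}\to H^2(\Sigma,\mathcal O_\Sigma)$.

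Combining the two paragraphs: $d\Phi$ is surjective if and only if its composite with $H^2(\Sigma,\mathbb C)\twoheadrightarrow H^2(\Sigma,\mathcal O_\Sigma)$ is surjective (since $d\Phi$ already surjects onto $F^1H^2(\Sigma)$ via the vertical directions), and that composite has image $\operatorname{Im}\overline\nabla(\lambda)$. Hence $\Phi$ is a submersion at $\tilde\lambda$ precisely when $\overline\nabla(\lambda):T_{U,0}\to H^2(\Sigma,\mathcal O_\Sigma)$ is surjective, which is the claim. I expect the routine-but-delicate point to be the bookkeeping in the second paragraph: checking carefully that the horizontal derivative of $\Phi$ equals $\nabla_u\tilde\lambda$ (this is exactly the statement that the local system trivialization is the flat one for $\nabla$), and that passing to the quotient by $F^1$ kills the ambiguity in the choice of lift $\tilde\lambda$ of $\lambda$ as well as the choice of local extension $\tilde\lambda(t)$ — both reductions being consequences of Griffiths transversality. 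Everything here is purely infinitesimal and does not use the singularities of $Y$; those enter only later, when one must exhibit a specific $\lambda$ for which $\overline\nabla(\lambda)$ is surjective.
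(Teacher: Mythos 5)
Your argument is correct and is exactly the standard computation behind this lemma (which the paper does not prove but cites from \cite[II, 5.3.4]{voisinbook}): split $T_{\tilde\lambda}(F^1H^2)$ into vertical and horizontal parts, observe that $d\Phi$ restricted to the vertical part is the inclusion of $F^1H^2(\Sigma)$, and identify the horizontal derivative modulo $F^1$ with $\overline{\nabla}(\lambda)$ via Griffiths transversality. No gaps.
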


Combining these facts, we see that Proposition \ref{secondprop} is a consequence of the following:

\begin{proposition} \label{propIVHS} Under the same  assumptions as in Proposition
\ref{secondprop},  there exists an integer $n_0$
such that for any multiple $n$ of $n_0$,  for generic
$\Sigma\in |nH|$ and for generic $\lambda\in H^1(\Sigma,\Omega_\Sigma)$, the map
$\overline{\nabla}(\lambda): H^0(\Sigma,\mathcal{O}_\Sigma(nH))\rightarrow H^2(\Sigma,\mathcal{O}_\Sigma)$ is surjective.
\end{proposition}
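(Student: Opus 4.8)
The plan is to compute $\overline{\nabla}(\lambda)$ explicitly using the description of the Hodge structure of a surface $\Sigma\in|nH|$ inside the threefold $Y$, via the conormal/adjunction exact sequences, and then reduce the surjectivity statement to a statement about multiplication of sections on $Y$ that can be checked for $n$ large and divisible. First I would recall, exactly as in \cite{voisinuniruled} in the smooth case, that for $\Sigma\subset Y$ smooth in $|nH|$ the differential of the period map is governed by the cup-product map
$$
H^1(\Sigma,T_\Sigma)\to \mathrm{Hom}\bigl(H^1(\Sigma,\Omega_\Sigma^1),H^2(\Sigma,\mathcal{O}_\Sigma)\bigr),
$$
and that, restricting to deformations of $\Sigma$ coming from moving it inside $Y$, the relevant tangent space is $H^0(\Sigma,N_{\Sigma/Y})=H^0(\Sigma,\mathcal{O}_\Sigma(nH))$ (the identification $T_{U,0}=H^0(\Sigma,nH_{|\Sigma})$ in the text). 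Thus $\overline{\nabla}(\lambda)$ is, up to the identifications, the composite of cup-product with the Kodaira–Spencer map $H^0(\Sigma,\mathcal{O}_\Sigma(nH))\to H^1(\Sigma,T_\Sigma)$, paired against $\lambda$. So the task is: for generic $\Sigma$ and generic $\lambda\in H^1(\Sigma,\Omega_\Sigma^1)$, the map $v\mapsto \langle \lambda, \kappa(v)\rangle\in H^2(\Sigma,\mathcal{O}_\Sigma)$ is onto.

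Next I would dualize. By Serre duality on $\Sigma$, surjectivity of $\overline{\nabla}(\lambda):H^0(\Sigma,\mathcal{O}_\Sigma(nH))\to H^2(\Sigma,\mathcal{O}_\Sigma)$ is equivalent to injectivity of the transpose map $H^0(\Sigma,K_\Sigma)\to H^0(\Sigma,\Omega_\Sigma^1\otimes\mathcal{O}_\Sigma(nH))^\vee$-type pairing; more usefully, using $K_\Sigma=\mathcal{O}_\Sigma(nH)$ (adjunction on $Y$, since $K_Y$ is trivial), the statement becomes a symmetric one about the multiplication map
$$
H^0(\Sigma,\mathcal{O}_\Sigma(nH))\otimes H^0(\Sigma,\Omega_Y^1{}_{|\Sigma})\longrightarrow H^0(\Sigma,\Omega_Y^1{}_{|\Sigma}(nH)),
$$
paired with $\lambda$ viewed, via the conormal sequence $0\to\mathcal{O}_\Sigma(-nH)\to\Omega_Y^1{}_{|\Sigma}\to\Omega_\Sigma^1\to0$, as an element of $H^1(\Sigma,\Omega_Y^1{}_{|\Sigma})$. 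The key point where the hypotheses on $Y$ enter is precisely here: to run this argument one needs $H^0(\Sigma,\Omega_Y^2{}_{|\Sigma})=0$, which is exactly Lemma \ref{annulationdeforme}, valid because $Y$ has isolated canonical singularities and $H^2(Y,\mathcal{O}_Y)=0$. This vanishing lets one lift $\lambda$ unambiguously and identify the relevant pieces of $H^*(\Sigma,\Omega_Y^p{}_{|\Sigma})$ with cohomology pulled back from $Y$, so that the whole computation takes place on $Y$ rather than on $\Sigma$.

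Then I would carry out the reduction to $Y$: using the restriction sequence $0\to\mathcal{O}_Y((n-k)H)\to\mathcal{O}_Y(nH)\to\mathcal{O}_\Sigma(nH)\to0$ (and its twists by $\Omega_Y^p$ and by $N_{\Sigma/Y}$), together with Serre vanishing for $n\gg0$ on $Y$, one replaces all the $\Sigma$-cohomology groups appearing by explicit quotients of $Y$-cohomology groups. The surjectivity of $\overline{\nabla}(\lambda)$ then follows from a base-point-freeness / general-position argument: for $n$ sufficiently large and divisible and $\Sigma$, $\lambda$ generic, the evaluation/multiplication maps in question are surjective because $\mathcal{O}_Y(nH)$ is very ample and the sections separate enough jets — this is the same type of effective argument as in \cite{voisinuniruled}, now run over the possibly singular $Y$ but only using $\Sigma\subset Y_{\mathrm{reg}}$. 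I expect the main obstacle to be precisely the bookkeeping of the singular threefold $Y$: one must make sure that every Hodge-theoretic identity used in the smooth case (Griffiths' description of $H^{p,q}(\Sigma)$ in terms of $Y$, the vanishing of the "extra" pieces coming from $\Omega_Y^2{}_{|\Sigma}$, and the compatibility of $\overline{\nabla}$ with the adjunction sequences) survives when $Y$ is only Gorenstein with isolated canonical singularities — and it is Lemma \ref{annulationdeforme}, rather than any genericity of $\Sigma$, that is doing the essential work there. Once that is in place, the surjectivity for $n$ large and divisible is a standard effective-globally-generated argument and presents no real difficulty.
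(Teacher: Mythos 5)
Your setup is on target: the identification of $\overline{\nabla}(\lambda)$ with a cup-product/Kodaira--Spencer composite, the use of $K_\Sigma\cong\mathcal{O}_\Sigma(nH)$ to make the maps $\overline{\nabla}(\lambda)$ symmetric in $\eta$ and $u$, and the recognition that the vanishing $H^0(\Sigma,\Omega^2_{Y\mid\Sigma})=0$ of Lemma \ref{annulationdeforme} is where the hypothesis of isolated canonical singularities enters are all exactly the ingredients the paper uses (that vanishing is consumed through the surjectivity of the map $\delta$ in Lemma \ref{surjH2}). But your final step --- ``the surjectivity for $n$ large and divisible is a standard effective-globally-generated argument and presents no real difficulty'' --- is a genuine gap, and it conceals essentially the entire content of the proposition. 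What has to be shown is that the linear system of quadrics $Q_\Sigma\subset S^2H^0(\Sigma,\mathcal{O}_\Sigma(nH))^*$, namely the image of $H^1(\Sigma,\Omega_\Sigma)$ under $\lambda\mapsto q_\lambda$, contains a \emph{nonsingular} member. No amount of very-ampleness or jet separation of $\mathcal{O}_Y(nH)$ gives this: $Q_\Sigma$ is a specific, comparatively small subspace of the space of all quadrics on $H^0(\Sigma,\mathcal{O}_\Sigma(nH))$, and the generic member of a linear system of quadrics can perfectly well be singular (for instance if all members share a singular point in the base locus). Bertini only guarantees smoothness away from the base locus, so one must control the base locus $B$ and the degeneracy loci $W_k$ of the contraction maps $i_v$.

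That control is the actual proof, and it occupies the rest of the section: (i) a Bertini-type criterion (Lemma \ref{lecriterion}) reducing smoothness of the generic quadric to dimension bounds on the loci $W_k$ inside $B$; (ii) the bound $\dim B\le cn^2$ for generic $\Sigma$, obtained by degenerating $\Sigma$ to a discriminant surface with roughly $\frac{n^3}{6}H^3$ nodes (via Barth's results) and showing the nodes impose independent conditions, so that the limiting quadrics $q_i(\eta)=\eta^2(p_i)$ cut down the base locus (Lemmas \ref{lelimit} and \ref{lebaselocus}); (iii) the uniform bound $\dim W_{cn^2,\Sigma}\le A$ with $A$ independent of $n$, obtained by comparing the infinitesimal variation of Hodge structure of $\Sigma$ with that of a Fermat surface pulled back under a generic projection $Y\to\mathbb{P}^3$ (Lemma \ref{rangcn2}); and (iv) the emptiness of $W_{A,\Sigma}$ for $n>A$, proved by contradiction using Green's theorem on multiplication of linear subsystems of bounded codimension together with the surjectivity of $\delta$ --- this last step being the only place where Lemma \ref{annulationdeforme} is actually used. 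None of (i)--(iv) appears in your plan, and without them the conclusion for generic $\lambda$ does not follow; the reduction to cohomology on $Y$ via Serre vanishing that you propose identifies the spaces involved but says nothing about the rank of a generic $q_\lambda$.
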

This proposition, in the case where $Y$ is smooth, already appeared
in \cite{voisinintjmath}, with a much simpler proof  given in \cite{voisinuniruled}.

Note that because $K_{\widetilde{Y}\mid \Sigma}$ is trivial, one has by adjunction
$$K_\Sigma\cong \mathcal{O}_\Sigma(nH)$$
and the two spaces $H^0(\Sigma,\mathcal{O}_\Sigma(nH)),\,H^2(\Sigma,\mathcal{O}_\Sigma)$
are of the same dimension, and more precisely dual to each other, the duality being
canonically determined by a choice of trivialization of $K_{\widetilde{Y}\mid \Sigma}$.

It follows from formulas (\ref{newref14sep}) and (\ref{montrelasymetrie}) given in
 Lemma  \ref{le13septembre}
 below that the homomorphisms $\overline{\nabla}(\lambda)$ are symmetric with respect
 to this duality. They can thus be seen as a system of quadrics on $H^0(\Sigma,\mathcal{O}_\Sigma(nH))$,
 and the statement is that the generic one is non singular.

 The proof of Proposition \ref{propIVHS} in the smooth case uses Griffiths' theory which computes the Hodge filtration
 on the middle  cohomology of a sufficiently ample hypersurface in a smooth projective
 variety and its infinitesimal variations, using residues of meromorphic forms. Following \cite{green}, the arguments can be made more formal and use only local infinitesimal data along the given hypersurface. But the vanishing
 needed still will use some global assumptions on the ambient variety.
In fact, the main technical point that we will need and where we will actually use the assumptions
 on the singularities of $Y$ is  Lemma \ref{surjH2} below.

 Before stating it, let us recall a few points concerning infinitesimal variations of Hodge structures of
 hypersurfaces (surfaces in our case).
 Consider    a smooth surface $\Sigma\subset Y$ in a linear system $|nH|$ on $Y$, where $H$ is ample
 on $Y$, and $Y$ is projective, smooth along $\Sigma$.
 There are two exact sequences deduced from the normal bundle sequence of $\Sigma$ in $Y$ (or rather $Y_{reg}$).
 \begin{eqnarray}\label{ex1} 0\rightarrow\Omega_\Sigma(nH)\rightarrow \Omega^2_{Y\mid\Sigma}(2nH)\rightarrow K_\Sigma(2nH)
 \rightarrow 0,
 \end{eqnarray}
 \begin{eqnarray}\label{ex2}0\rightarrow\mathcal{O}_\Sigma\rightarrow \Omega_{Y\mid\Sigma}(nH)\rightarrow \Omega_\Sigma(nH)\rightarrow 0.
 \end{eqnarray}
  Applying the long exact sequence associated to (\ref{ex1}) provides us with a map
  $$\delta_1:H^0(\Sigma,K_\Sigma(2nH))\rightarrow H^1(\Sigma,\Omega_\Sigma(nH)).$$
  The long exact sequence associated to (\ref{ex2}) provides similarly
  a map
  $$\delta_2: H^1(\Sigma,\Omega_\Sigma(nH))\rightarrow H^2(\Sigma,\mathcal{O}_\Sigma).$$
  Set $\delta:=\delta_2\circ\delta_1:H^0(\Sigma,K_\Sigma(2nH))\rightarrow H^2(\Sigma,\mathcal{O}_\Sigma)$.

  Similarly, let \begin{eqnarray}\label{ex38SEP}\delta':H^0(\Sigma,K_\Sigma(nH))\rightarrow H^1(\Sigma,\Omega_\Sigma)
 \end{eqnarray}
  be the map induced by the short exact sequence
   $$ 0\rightarrow\Omega_\Sigma\rightarrow \Omega^2_{Y\mid\Sigma}(nH)\rightarrow K_\Sigma(nH)
 \rightarrow 0.
 $$
Let us recall  the relevance of these maps to
the study of infinitesimal variations of Hodge structures of the surfaces
$\Sigma\subset Y$.

 \begin{lemma}\label{le13septembre} We have for $u\in H^0(\Sigma,\mathcal{O}_\Sigma(nH))$, $\eta\in H^0(\Sigma, K_\Sigma)$
 \begin{eqnarray} \label {newref14sep}\overline{\nabla}(\eta)(u)=\delta'(\eta u),
 \end{eqnarray}

 Furthermore, for $\sigma\in H^0(\Sigma,K_\Sigma(nH))$,  $u,\,\eta$ as above, we have
 \begin{eqnarray}\label{autre13sept} \overline{\nabla}(\delta'(\sigma))(u)=\delta(\sigma u)\,\,{\rm in}\,\,H^2(\Sigma,\mathcal{O}_\Sigma).
 \end{eqnarray}
 Finally,  for $\lambda\in H^1(\Sigma, \Omega_\Sigma)$, and any $u,\,\eta$ as above,
 \begin{eqnarray}\label{montrelasymetrie} \langle\overline{\nabla}(\lambda)(u),\eta\rangle=-\langle\lambda,\overline{\nabla}(\eta)(u)\rangle,
 \end{eqnarray}
 where the  first pairing uses Serre's duality between $H^0(\Sigma,K_\Sigma)$ and
 $H^2(\Sigma,\mathcal{O}_\Sigma)$, while the second pairing is the intersection pairing on $H^1(\Sigma,\Omega_\Sigma)$.
 \end{lemma}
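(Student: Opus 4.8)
All three formulas are instances of Griffiths' description of the infinitesimal variation of Hodge structure of the family $\pi\colon\mathcal{S}\to U$, and the plan is to reduce each one to that description plus a bookkeeping of the connecting homomorphisms involved. Write $N:=N_{\Sigma/Y}\cong\mathcal{O}_\Sigma(nH)$, so that $T_{U,0}=H^0(\Sigma,N)=H^0(\Sigma,\mathcal{O}_\Sigma(nH))$, and let $\rho\colon H^0(\Sigma,N)\to H^1(\Sigma,T_\Sigma)$ be the Kodaira--Spencer map of the family, i.e. the connecting homomorphism of the normal bundle sequence $0\to T_\Sigma\to T_{Y\mid\Sigma}\to N\to 0$; equivalently $\rho(u)=u\cup e$, where $e\in H^1(\Sigma,T_\Sigma\otimes N^{-1})=\operatorname{Ext}^1_\Sigma(\Omega_\Sigma,N^{-1})$ is the extension class of the conormal sequence $0\to N^{-1}\to\Omega_{Y\mid\Sigma}\to\Omega_\Sigma\to 0$. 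The first ingredient will be Griffiths' theorem (cf. \cite[I, 10.2]{voisinbook}) that the graded Gauss--Manin maps $\overline{\nabla}$ appearing here are, up to a fixed sign, cup product with $\rho(u)$ followed by the contraction $T_\Sigma\otimes\Omega^p_\Sigma\to\Omega^{p-1}_\Sigma$: for $\eta\in H^0(\Sigma,K_\Sigma)$ one has $\overline{\nabla}(\eta)(u)=\rho(u)\,\lrcorner\,\eta\in H^1(\Sigma,\Omega_\Sigma)$, and for $\lambda\in H^1(\Sigma,\Omega_\Sigma)$ one has $\overline{\nabla}(\lambda)(u)=\rho(u)\,\lrcorner\,\lambda\in H^2(\Sigma,\mathcal{O}_\Sigma)$.

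Granting this, I would prove (\ref{newref14sep}) by checking the purely cohomological identity $\delta'(\eta u)=\rho(u)\,\lrcorner\,\eta$. The point is that the sequence $0\to\Omega_\Sigma\to\Omega^2_{Y\mid\Sigma}(nH)\to K_\Sigma(nH)\to 0$ defining $\delta'$ in (\ref{ex38SEP}) is obtained by applying $\wedge^2$ to the conormal sequence and twisting by $N$, so its extension class in $\operatorname{Ext}^1_\Sigma(K_\Sigma(nH),\Omega_\Sigma)$ is the image of $e$ under the natural map, and $\delta'$ is cup product with it. Writing $\eta u\in H^0(\Sigma,\Omega^2_\Sigma\otimes N)$ and using that the factor $u$ lies in degree $0$, commutativity and associativity of the cup product give $\delta'(\eta u)=(\eta u)\cup e=\eta\,\lrcorner\,(u\cup e)=\rho(u)\,\lrcorner\,\eta$, where the contractions are $\Omega^2_\Sigma\otimes T_\Sigma\to\Omega_\Sigma$ and $N\otimes N^{-1}\to\mathcal{O}_\Sigma$. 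Formula (\ref{autre13sept}) is handled the same way: $\delta=\delta_2\circ\delta_1$, where $\delta_1$ and $\delta_2$ are cup product with the extension classes of (\ref{ex1}) and (\ref{ex2}) respectively, both again descending from $e$ after the appropriate twist; a diagram chase identifying the Yoneda composite then yields $\delta(\sigma u)=\rho(u)\,\lrcorner\,\delta'(\sigma)=\overline{\nabla}(\delta'(\sigma))(u)$ for $\sigma\in H^0(\Sigma,K_\Sigma(nH))$.

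Formula (\ref{montrelasymetrie}) I would instead derive directly and cheaply, using only that the Gauss--Manin connection is flat and preserves the cup product pairing $\langle\ ,\ \rangle$ on $H^2$, together with Griffiths transversality and the Hodge--Riemann orthogonality relations. Choose local holomorphic sections $s_\eta$ of $F^2\mathcal{H}^2$ and $s_\lambda$ of $F^1\mathcal{H}^2$ with $s_\eta(0)=\eta$ and $s_\lambda(0)=\tilde\lambda$. Since $\langle F^2,F^1\rangle=0$ ($H^{2,0}$ pairs trivially with $F^1=H^{2,0}\oplus H^{1,1}$ for type reasons), the function $\langle s_\eta,s_\lambda\rangle$ is identically $0$ on $U$; differentiating along $u\in T_{U,0}$ and using flatness gives $\langle\nabla_u s_\eta,s_\lambda\rangle(0)+\langle s_\eta,\nabla_u s_\lambda\rangle(0)=0$. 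By transversality $\nabla_u s_\eta$ takes values in $F^1$ and $\nabla_u s_\lambda$ in $F^0$, and again by the orthogonality relations only the $H^{1,1}$-component $\overline{\nabla}(\eta)(u)$ of the first and the $H^{0,2}$-component $\overline{\nabla}(\lambda)(u)$ of the second contribute; this reads $\langle\overline{\nabla}(\eta)(u),\lambda\rangle+\langle\eta,\overline{\nabla}(\lambda)(u)\rangle=0$, which is (\ref{montrelasymetrie}) after using symmetry of the pairing.

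The main obstacle is the second paragraph: identifying the extension classes of (\ref{ex1}), (\ref{ex2}), (\ref{ex38SEP}) explicitly in terms of $e$, getting the contraction bookkeeping and the overall signs right, and carrying out the diagram chase that computes $\delta_2\circ\delta_1$ as the iterated contraction by $\rho(u)$. All of this is local along $\Sigma$ and is standard in the theory of infinitesimal variations of Hodge structure of hypersurfaces (compare \cite{green}, and the smooth case in \cite{voisinuniruled}); I do not expect a genuinely new difficulty, only that care with conventions is required. I would also emphasise that, since the statement concerns only a smooth surface $\Sigma$ lying in the smooth locus of $Y$, no hypothesis on the singularities of $Y$ intervenes here; those will enter only later, in Lemma \ref{surjH2}.
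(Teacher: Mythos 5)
Your proposal is correct and follows essentially the same route as the paper: formulas (\ref{newref14sep}) and (\ref{autre13sept}) via Griffiths' description of $\overline{\nabla}$ as contraction with the Kodaira--Spencer class together with the observation that the sequences defining $\delta'$, $\delta_1$, $\delta_2$ are all twists of the (co)normal bundle sequence, and (\ref{montrelasymetrie}) by the standard flatness-plus-transversality computation. The only difference is one of presentation: the paper simply cites \cite[I, Theorem 10.21]{voisinbook} and formula (5.14) of \cite[II, 5.3.3]{voisinbook} for these facts, whereas you spell out the arguments behind those citations.
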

 \begin{proof} The two formulas (\ref{newref14sep}) and (\ref{autre13sept}) follow from Griffiths'
 general description of
 the maps $\overline{\nabla}$ acting on  a given infinitesimal deformation
 $\rho(u)\in H^1(\Sigma,T_\Sigma)$ of $\Sigma$ (cf. \cite[I, Theorem 10.21]{voisinbook}), and from the fact that
 the exact sequences written above are all twists of the normal bundle
 exact sequence
 $$0\rightarrow T_\Sigma\rightarrow T_{Y\mid\Sigma}\rightarrow \mathcal{O}_\Sigma(nH)\rightarrow0$$
 which governs the Kodaira-Spencer map $\rho:H^0(\Sigma,\mathcal{O}_\Sigma(nH))\rightarrow H^1(\Sigma,T_\Sigma)$.

 Formula (\ref{montrelasymetrie}) is formula (5.14) proved in \cite[II, 5.3.3]{voisinbook}.
 \end{proof}

\begin{lemma} \label{surjH2} Assume that $n$ is sufficiently large  and that $\Sigma$ satisfies
\begin{eqnarray}\label{hypoth}H^0(\Sigma,\Omega_{Y\mid \Sigma}^2)=0.\end{eqnarray}
Then $\delta$ is surjective.

In particular, under the assumptions of Proposition \ref{secondprop}, the map
$\delta$ is surjective for $n$ sufficiently divisible.
\end{lemma}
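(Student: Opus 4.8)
The statement to prove is Lemma \ref{surjH2}: the composite map $\delta=\delta_2\circ\delta_1\colon H^0(\Sigma,K_\Sigma(2nH))\to H^2(\Sigma,\mathcal{O}_\Sigma)$ is surjective once $n$ is large and the vanishing $H^0(\Sigma,\Omega^2_{Y\mid\Sigma})=0$ holds. My plan is to factor the surjectivity of $\delta$ through the two maps $\delta_1$ and $\delta_2$ coming from the long exact sequences attached to (\ref{ex1}) and (\ref{ex2}), and to realize $\ker\delta_2$ and $\mathrm{coker}\,\delta_1$ as cohomology groups that one can kill for $n\gg 0$, using the hypothesis (\ref{hypoth}) exactly at the one spot where it is needed. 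Concretely: from the long exact sequence of (\ref{ex2}),
\[
H^1(\Sigma,\Omega_{Y\mid\Sigma}(nH))\to H^1(\Sigma,\Omega_\Sigma(nH))\stackrel{\delta_2}{\to} H^2(\Sigma,\mathcal{O}_\Sigma)\to H^2(\Sigma,\Omega_{Y\mid\Sigma}(nH)),
\]
so $\delta_2$ is surjective as soon as $H^2(\Sigma,\Omega_{Y\mid\Sigma}(nH))=0$. Now $\Omega_{Y\mid\Sigma}$ is a fixed sheaf on $Y$ restricted to $\Sigma$, and $\Sigma\in|nH|$ with $H$ ample; twisting by $nH$ and using Serre vanishing (applied on $Y_{reg}$, or on a fixed desingularization via the sequences already available in the proof of Lemma \ref{annulationdeforme}), this $H^2$ vanishes for $n$ sufficiently large. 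Hence $\delta_2$ is surjective for $n\gg0$, and it remains to show $\delta_1$ is surjective onto $H^1(\Sigma,\Omega_\Sigma(nH))$, or at least onto enough of it to hit all of $H^2(\Sigma,\mathcal{O}_\Sigma)$ under $\delta_2$.

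For $\delta_1$, the long exact sequence of (\ref{ex1}) gives
\[
H^0(\Sigma,\Omega^2_{Y\mid\Sigma}(2nH))\to H^0(\Sigma,K_\Sigma(2nH))\stackrel{\delta_1}{\to} H^1(\Sigma,\Omega_\Sigma(nH))\to H^1(\Sigma,\Omega^2_{Y\mid\Sigma}(2nH)),
\]
so $\delta_1$ is surjective once $H^1(\Sigma,\Omega^2_{Y\mid\Sigma}(2nH))=0$, which again follows from Serre vanishing for $n\gg0$ since $\Omega^2_{Y\mid\Sigma}$ is fixed. So for $n$ large both $\delta_1$ and $\delta_2$ are surjective, and therefore $\delta$ is surjective — but wait: this argument as written does not visibly use hypothesis (\ref{hypoth}) at all, which is suspicious. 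The point is that the twists I wrote are $\Omega^2_{Y\mid\Sigma}(2nH)$, and to get uniform Serre vanishing on $\Sigma$ one needs to express these as restrictions of sheaves on a \emph{fixed} model; passing to the desingularization $\tau\colon\widetilde Y\to Y$, the relevant sheaf is $\Omega^2_{\widetilde Y}$ (up to the correction divisor $E$, as in Lemma \ref{annulationdeforme}), and $\Omega^2_{Y\mid\Sigma}=\Omega^2_{\widetilde Y\mid\Sigma}$ since $\Sigma\subset Y_{reg}$ is disjoint from $E$. The hypothesis (\ref{hypoth}) — equivalently, the conclusion (\ref{vanishingsectionssursigma}) of Lemma \ref{annulationdeforme} — is what guarantees that the \emph{untwisted} $H^0(\Sigma,\Omega^2_{Y\mid\Sigma})$ vanishes, which is needed to control the kernel of $\delta_1$ at the low-twist end and to match dimensions via Serre duality on $\Sigma$; without it, $\delta_1$ could have a cokernel landing in the part of $H^1(\Sigma,\Omega_\Sigma(nH))$ on which $\delta_2$ is injective.

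So the corrected plan is: first establish the three vanishings $H^2(\Sigma,\Omega_{Y\mid\Sigma}(nH))=0$, $H^1(\Sigma,\Omega^2_{Y\mid\Sigma}(2nH))=0$, and $H^1(\Sigma,\Omega_{Y\mid\Sigma}(nH))=0$ by Serre vanishing on the fixed desingularization $\widetilde Y$ combined with the observation that $\Sigma\in|\tau^*nH|$ avoids $E$ (exactly the setup of Lemma \ref{annulationdeforme}); second, feed hypothesis (\ref{hypoth}) into the exact sequence of (\ref{ex1}) to see that $\delta_1$ is injective with cokernel vanishing, hence an isomorphism onto $H^1(\Sigma,\Omega_\Sigma(nH))$ for $n\gg0$; third, conclude surjectivity of $\delta_2$ from the exact sequence of (\ref{ex2}) as above; fourth, compose. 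For the final sentence of the lemma, note that under the assumptions of Proposition \ref{secondprop} the threefold $Y$ has canonical isolated singularities and $H^2(Y,\mathcal{O}_Y)=0$, so Lemma \ref{annulationdeforme} applies and gives (\ref{hypoth}) for all sufficiently divisible $n$ and all $\Sigma\in|nH|$ contained in $Y_{reg}$; a general such $\Sigma$ is smooth and contained in $Y_{reg}$ because the singular locus is a finite set of points and $|nH|$ separates points for $n$ large, so the hypotheses of the first part are met. The main obstacle I anticipate is purely bookkeeping: ensuring that the various $\Omega^\bullet_{Y\mid\Sigma}$ really do arise as restrictions of a \emph{single} coherent sheaf on a fixed variety (so that one value of $n_0$ works for Serre vanishing simultaneously), which is where the passage to $\widetilde Y$ and the divisor $E$ from Lemma \ref{annulationdeforme} does the real work; the exact-sequence chase itself is formal once those vanishings are in hand.
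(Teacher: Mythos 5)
Your overall skeleton is the same as the paper's: factor $\delta=\delta_2\circ\delta_1$ and kill the cokernel of each map via the long exact sequences of (\ref{ex1}) and (\ref{ex2}). But there is a genuine gap at the crucial step, the surjectivity of $\delta_2$. You assert that $H^2(\Sigma,\Omega_{Y\mid\Sigma}(nH))$ vanishes for $n\gg0$ by Serre vanishing, and you keep this assertion in your ``corrected plan''. It is false: since $\Sigma\in|nH|$ and $K_Y$ is trivial, adjunction gives $K_\Sigma\cong\mathcal{O}_\Sigma(nH)$, so Serre duality on the smooth surface $\Sigma$ identifies $H^2(\Sigma,\Omega_{Y\mid\Sigma}(nH))$ with the dual of $H^0(\Sigma,T_{Y\mid\Sigma}(-nH)\otimes K_\Sigma)=H^0(\Sigma,T_{Y\mid\Sigma})=H^0(\Sigma,\Omega^2_{Y\mid\Sigma})$, the last equality because $T_Y\cong\Omega^2_Y\otimes(-K_Y)\cong\Omega^2_Y$ on $Y_{reg}$. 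The twist by $nH$ is exactly cancelled by the growth of $K_\Sigma$, so this obstruction group is independent of $n$ and is precisely the group appearing in hypothesis (\ref{hypoth}); no Serre vanishing on $Y$, $Y_{reg}$ or $\widetilde{Y}$ can make it vanish. Equivalently, in the restriction sequence $0\to\Omega_Y\to\Omega_Y(nH)\to\Omega_{Y\mid\Sigma}(nH)\to0$, the group $H^2(\Sigma,\Omega_{Y\mid\Sigma}(nH))$ surjects onto the kernel of $H^3(Y,\Omega_Y)\to H^3(Y,\Omega_Y(nH))$, and the fixed group $H^3(Y,\Omega_Y)$ survives as $n$ grows. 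This is exactly where the paper invokes (\ref{hypoth}): the cokernel of $\delta_2$ is contained in $H^2(\Sigma,\Omega_{Y\mid\Sigma}(nH))$, which is Serre dual to $H^0(\Sigma,\Omega^2_{Y\mid\Sigma})=0$.

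Your attempted correction, prompted by the (correct) suspicion that the hypothesis must be used somewhere, relocates it to the $\delta_1$ step and claims that $\delta_1$ is injective with vanishing cokernel. The injectivity claim is also wrong and unnecessary: the kernel of $\delta_1$ contains the image of the large space $H^0(\Sigma,\Omega^2_{Y\mid\Sigma}(2nH))$, which has nothing to do with the untwisted group $H^0(\Sigma,\Omega^2_{Y\mid\Sigma})$ of (\ref{hypoth}). In fact the $\delta_1$ step needs no hypothesis at all: its cokernel sits in $H^1(\Sigma,\Omega^2_{Y\mid\Sigma}(2nH))$, and there the twist $2nH$ genuinely outruns $K_\Sigma\cong nH$, so the sequence $0\to\Omega^2_Y(nH)\to\Omega^2_Y(2nH)\to\Omega^2_{Y\mid\Sigma}(2nH)\to0$ together with Serre vanishing of $H^1(Y,\Omega^2_Y(2nH))$ and $H^2(Y,\Omega^2_Y(nH))$ does the job, exactly as in the paper. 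So the exact-sequence chase is set up correctly, but the hypothesis is applied to the wrong map, and the step where it is indispensable rests on an incorrect vanishing claim.
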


\begin{proof}
The second statement follows indeed from the first, since we proved in Lemma \ref{annulationdeforme}
that when $Y$  has isolated canonical singularities  and
$H^2({Y},\mathcal{O}_{{Y}})=0$,  we have  the vanishing (\ref{hypoth})
for $n$ sufficiently divisible.

We first prove  that $\delta_1$ is surjective for large $n$ and for any $\Sigma$ as above, without any further
assumption on $Y$. Indeed, looking at the long exact sequence associated to (\ref{ex1}), we find that the cokernel
of $\delta_1$ is contained in $H^1(\Sigma,\Omega^2_{Y\mid\Sigma}(2nH))$.  Consider the exact sequence:
$$0\rightarrow \Omega^2_{Y}(nH)\rightarrow \Omega^2_{Y}(2nH)\rightarrow\Omega^2_{Y\mid\Sigma}(2nH)\rightarrow 0.$$

We get by Serre's vanishing on $Y$ that for large $n$, both
$H^1(Y, \Omega^2_{Y}(2nH))$ and $H^2(Y, \Omega^2_{Y}(nH))$ vanish  and it follows that
$H^1(\Sigma,\Omega^2_{Y\mid\Sigma}(2nH))$
also vanishes.

It remains to prove that $\delta_2$ is surjective under assumption (\ref{hypoth}), for large $n$.
Looking at the long exact sequence associated to (\ref{ex2}), we find that the cokernel
of $\delta_2$ is contained in $H^2(\Sigma,\Omega_{Y\mid\Sigma}(nH))$. As
$K_\Sigma=nH|_\Sigma$ by adjunction, we find that
$H^2(\Sigma,\Omega_{Y\mid\Sigma}(nH))$ is Serre dual to
$$H^0(\Sigma,T_{Y\mid\Sigma}(K_{Y\mid \Sigma}))=H^0(\Sigma,\Omega_{Y\mid\Sigma}^2),$$
which vanishes by assumption.
\end{proof}
For the convenience of the reader, we now summarize the main steps in the
proof of Proposition \ref{propIVHS} given in \cite{voisinuniruled} in the case where
$Y$ is smooth with trivial canonical bundle, in order to make clear why
 the argument still works in the singular case, when $Y$ has isolated canonical singularities.

 Let $n$  be divisible enough so that the
 conclusion of Lemma \ref{annulationdeforme} holds. For any smooth $\Sigma\in |nH|$, we
  have the system $Q_\Sigma$
 of quadrics on
 $H^0(\Sigma,K_\Sigma)\cong H^0(\Sigma,\mathcal{O}_\Sigma(nH))$ given
 as the image of $H^1(\Sigma,\Omega_\Sigma)$ in $S^2H^0(\Sigma,\mathcal{O}_\Sigma(nH))^*$,
 via the map \begin{eqnarray}\label{definq}q,\, \lambda\mapsto q_\lambda,
 \end{eqnarray} where
 $$ q_\lambda(\eta,u):=<\lambda,\overline{\nabla}(\eta)(u)>,
 $$
 for $\eta\in H^0(\Sigma,K_\Sigma)=H^0(\Sigma,\mathcal{O}_\Sigma(nH)),\,u\in H^0(\Sigma,\mathcal{O}_\Sigma(nH))$.

 As explained above, the statement to be proved can be restated by
 saying that for generic $\Sigma$, $Q_\Sigma$ contains a smooth quadric.

 We will apply the following lemma (cf. \cite[Proposition 1.10]{voisinuniruled}, \cite[Lemma 15]{voisinintjmath}) which is an easy consequence
 of Bertini's Lemma:
 \begin{lemma} \label{lecriterion} A linear system of quadrics $Q$ on a vector space $V$ contains a
 smooth member if the following  condition (*) holds:

 (*) Let $B\subset \mathbb{P}(V)$ be the base locus of $Q$ and $b:={\rm dim}\,B$. Then
 for any $k\leq b$, the subset
 $ W_k\subset \mathbb{P}(V)$, defined as
 $$W_k:=\{v\in B,\,i_v:Q\rightarrow V^*\,\,{\rm has \,\,rank}\,\,\leq k\}$$
 has dimension $< k$.
 \end{lemma}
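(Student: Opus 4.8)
The plan is to argue by contradiction, using Bertini's theorem to reduce the problem to a dimension count on an incidence variety, into which hypothesis (*) feeds directly. Fix notation: view $Q$ as a linear subspace of $S^2V^*$, so each $q\in Q$ is a quadratic form with associated symmetric linear map $\tilde q\colon V\to V^*$; by the Jacobian criterion the singular locus of the hypersurface $\{q=0\}\subset\mathbb{P}(V)$ is exactly $\mathbb{P}(\ker\tilde q)$, so $\{q=0\}$ is smooth if and only if $\tilde q$ is an isomorphism. Set $N:=\dim\mathbb{P}(Q)$. For a representative $v$ of a point of $\mathbb{P}(V)$ the map $i_v\colon Q\to V^*$, $q\mapsto q(v,\cdot)$, has kernel of projective dimension $N-\rk\,i_v$, and for $[q]\in\mathbb{P}(Q)$ one has $i_v(q)=0$ precisely when $v\in\ker\tilde q$. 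We want to show that for general $q\in Q$ the form $\tilde q$ is nondegenerate.

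Suppose not. Since the singular quadrics in $\mathbb{P}(Q)$ form a closed subset (a linear section of the discriminant), this means \emph{every} member of $Q$ is singular. By Bertini's theorem in characteristic $0$, the general member of the linear system $|Q|$ on the smooth variety $\mathbb{P}(V)$ is smooth away from the base locus $\Bs\,|Q|=B$; hence for general $q$ the nonempty set $\mathbb{P}(\ker\tilde q)=\{q=0\}_{\sing}$ is contained in $B$. Now form the incidence variety
$$I:=\{([v],[q])\in B\times\mathbb{P}(Q)\ :\ i_v(q)=0\ \text{in}\ V^*\}.$$
Its second projection $I\to\mathbb{P}(Q)$ is then dominant, because over a general $[q]$ the fibre is $\mathbb{P}(\ker\tilde q)\cap B=\mathbb{P}(\ker\tilde q)\neq\varnothing$. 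In particular $\dim I\ge N$.

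On the other hand I would bound $\dim I$ through the first projection $p\colon I\to B$, whose fibre over $[v]$ is $\mathbb{P}(\ker i_v)$, of dimension $N-\rk\,i_v$. Stratifying $B$ according to the value $k$ of $\rk\,i_v$, the corresponding stratum lies in $W_k$, so the part of $I$ over it has dimension at most $\dim W_k+(N-k)$. If $k\le b:=\dim B$, then (*) gives $\dim W_k\le k-1$, so this is $\le N-1$; if $k>b$, then $\dim W_k\le b\le k-1$, so again it is $\le N-1$. (The case $k=0$ is included, (*) forcing $W_0=\varnothing$; and if $B=\varnothing$ then $I=\varnothing$ and Bertini already provides a smooth member.) Summing over the finitely many strata yields $\dim I\le N-1<N$, contradicting dominance. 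Hence the general $q\in Q$ is a smooth quadric.

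The substance of the argument is just this splitting of the problem along $B$: Bertini controls the singularities of the general member \emph{outside} $B$, and (*) is tailored exactly so that the determinantal stratification of $B$ forces the remaining incidence locus to have dimension $<N$. I expect the only points needing care to be the two-case dimension estimate together with the degenerate cases ($k=0$, $B=\varnothing$, $\dim Q=1$), all of which are routine; the single indispensable external input is Bertini's theorem, and an attempt to avoid it by working with the full incidence variety over $\mathbb{P}(V)\times\mathbb{P}(Q)$ would break down precisely over $\mathbb{P}(V)\setminus B$, where the fibre-dimension bound $N-\rk\,i_v$ is too weak.
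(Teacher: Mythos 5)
Your proof is correct and follows exactly the route the paper indicates: Bertini's theorem to confine the singular points of a general member to the base locus $B$, followed by a dimension count on the incidence variety over the rank stratification of $B$, with (*) supplying the bound for $k\leq b$ and the trivial bound $\dim B=b<k$ handling $k>b$. The paper only sketches this (it spells out the case $b=0$ and refers to \cite{voisinuniruled} and \cite{voisinintjmath}), and your write-up is a complete and accurate version of that same argument.
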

 (In this statement, the map $i_v$ associated to $v$ is constructed by contraction.)
 For example, if $b=0$, the base-locus consists of isolated points, and by Bertini, if any member of
 $Q$ was singular, it would be singular at some common point $v\in B$, which would
 satisfy $i_v=0$, which is excluded by assumption (*).

 The first step in the proof of the proposition is the following asymptotic upper-bound
 for the dimension of the base-locus of $Q_\Sigma$ for generic $\Sigma\in |nH|$
 as a function of $n$.

 \begin{lemma} \label{lebaselocus} There exists  a positive number $c$, such that for any $n$ and for
 generic $\Sigma\in |nH|$, the dimension of the base-locus of $Q_\Sigma$ is
 $\leq cn^2$.
 \end{lemma}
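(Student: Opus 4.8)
The plan is to reduce the estimate on the base locus of $Q_\Sigma$ to a purely local statement about jets along $\Sigma$, and then to a dimension count. First I would recall that by formula (\ref{newref14sep}) of Lemma \ref{le13septembre}, $q_\lambda(\eta,u)=\langle\lambda,\delta'(\eta u)\rangle$, so that a point $v\in\PP(H^0(\Sigma,\mathcal{O}_\Sigma(nH)))$ lies in the base locus $B$ of $Q_\Sigma$ precisely when $\delta'(v\cdot u)=0$ for every $u\in H^0(\Sigma,\mathcal{O}_\Sigma(nH))$; equivalently, when the image of the multiplication map $H^0(\Sigma,\mathcal{O}_\Sigma(nH))\to H^0(\Sigma,K_\Sigma(nH))$, $u\mapsto vu$, lands in the kernel of $\delta'$. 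Now $\delta'$ fits into the long exact sequence of $0\to\Omega_\Sigma\to\Omega^2_{Y\mid\Sigma}(nH)\to K_\Sigma(nH)\to 0$, so its kernel is the image of $H^0(\Sigma,\Omega^2_{Y\mid\Sigma}(nH))$ in $H^0(\Sigma,K_\Sigma(nH))$. Thus $v\in B$ forces $v\cdot H^0(\Sigma,\mathcal{O}_\Sigma(nH))\subset\mathrm{im}\big(H^0(\Sigma,\Omega^2_{Y\mid\Sigma}(nH))\big)$.

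The next step is to transfer this condition onto $Y$ (or rather $Y_{reg}$, which contains $\Sigma$) using the defining equation. Writing $\Sigma=\{f=0\}$ with $f\in H^0(Y,\mathcal{O}_Y(nH))$, Griffiths' formalism (cf.\ \cite{green}, \cite[II, 6.2]{voisinbook}) identifies, for $n$ large, the spaces $H^0(\Sigma,K_\Sigma(knH))$ and the maps $\delta$, $\delta'$ with graded pieces of a Jacobian-type ring $R_f$ built from $f$ and its partial derivatives, via residues. In this language a base point $v$ of $Q_\Sigma$ corresponds to an element of a suitable graded piece of $R_f$ whose product with the whole degree-one piece (coming from $H^0(\Sigma,\mathcal{O}_\Sigma(nH))$) vanishes in a bounded-degree piece of $R_f$. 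I would then bound the dimension of the locus of such $v$: the condition is that $v$ annihilates a base-point-free linear system of divisors on $\Sigma$, hence $v$ is constrained to vanish to high order along the common zero locus of generic sections, and a Noether–Lefschetz–type count (choosing $\Sigma$ generic so that $R_f$ is as close to a complete-intersection Jacobian ring as possible) shows that this locus has dimension growing only like $cn^2$, the leading term being governed by $(nH)^2=n^2 H^2$ on the surface $\Sigma$.

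Concretely, the key estimate is of the following shape: for generic $\Sigma$, any $v$ in the base locus must be divisible, in the Jacobian ring, by a power $g^m$ of a fixed generic section $g\in H^0(\Sigma,\mathcal{O}_\Sigma(nH))$ with $m$ bounded independently of $n$ (because $v\cdot g^{?}$ must vanish in a piece of $R_f$ of bounded degree-difference), and the space of such multiples has dimension at most $h^0(\Sigma,\mathcal{O}_\Sigma(\text{(const)}\,nH - \text{fixed}))\le c'n^2$ by Riemann–Roch on the surface $\Sigma$. Passing to projectivizations and letting $\Sigma$ vary gives the bound $\dim B\le cn^2$ for generic $\Sigma$.

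I expect the main obstacle to be the passage from the intrinsic description of $\delta$, $\delta'$ via the exact sequences (\ref{ex1}), (\ref{ex2}) to an explicit Jacobian-ring computation when $Y$ is only smooth \emph{along} $\Sigma$ and possibly singular elsewhere: Griffiths' theory in its classical form needs $Y$ smooth, so one must either work on a partial resolution $\widetilde{Y}\to Y$ (using Lemma \ref{annulationdecoh} and Lemma \ref{annulationdeforme} to control the relevant cohomology of $\Omega^2_{\widetilde Y}$) and note that $\Sigma\subset Y_{reg}$ is unaffected, or run Green's more formal ``local'' version of the theory which only uses infinitesimal data along $\Sigma$. Once that technical bridge is in place, the dimension count itself is routine Riemann–Roch on a surface and should produce an explicit, $\Sigma$-independent constant $c$ (e.g.\ $c$ a small multiple of $H^2$).
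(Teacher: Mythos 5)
Your plan diverges from the paper's argument and, as written, has two genuine gaps. First, a point $v$ lies in the base locus of the linear system of quadrics $Q_\Sigma$ when $q_\lambda(v,v)=0$ for all $\lambda$, i.e.\ when $\delta'(v^2)=0$; the condition you impose, $\delta'(vu)=0$ for every $u\in H^0(\Sigma,\mathcal{O}_\Sigma(nH))$, is the condition $i_v=0$ (that $v$ be a singular point of every member of $Q_\Sigma$), which cuts out a \emph{smaller} set. Bounding that smaller set gives no bound on the base locus, so the reduction runs in the wrong direction. Second, and more seriously, the dimension count at the heart of your plan does not produce $O(n^2)$: for $\Sigma\in|nH|$ in the threefold $Y$ one has $(knH|_\Sigma)^2=k^2n^3H^3$, so $h^0(\Sigma,\mathcal{O}_\Sigma(knH-\text{fixed}))$ grows like $n^3$, not $n^2$ (indeed $\dim V_\Sigma\sim\frac{n^3}{6}H^3$); subtracting a divisor of bounded degree cannot bring this down to $O(n^2)$. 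Moreover the proposed ``divisibility by $g^m$'' is not consistent with the degrees: $v$ and $g$ both lie in $H^0(\Sigma,\mathcal{O}_\Sigma(nH))$, so divisibility of $v$ by $g$ already forces $v$ proportional to $g$. The $O(n^2)$ bound is precisely the nontrivial content of the lemma, and it needs a global input that your plan does not supply.

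The paper obtains that input by degeneration. One specializes $\Sigma$ to a surface $\Sigma_0$ with $N$ ordinary double points, where $N=\binom{n+1}{3}H^3\sim\frac{n^3}{6}H^3$, constructed as the discriminant of a generic symmetric $n\times n$ matrix of sections of $H$ (Barth); the point is that $N$ matches the leading term of $\dim V_{\Sigma_0}$. By Lemma \ref{lelimit} the limiting system $Q_{\Sigma_0}$ contains the quadrics $\eta\mapsto\eta^2(p_i)$, so its base locus is contained in $\mathbb{P}(H^0(\Sigma_0,\mathcal{I}_W(nH)))$ with $W$ the set of nodes; one then shows, via a resolution of $\mathcal{I}_W$, that $W$ imposes independent conditions on $|(n+2)H|$, whence $\dim H^0(Y,\mathcal{I}_W(nH))\leq cn^2$, and semicontinuity transfers the bound to generic $\Sigma$. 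If you wish to pursue the Jacobian-ring route you would still have to manufacture an analogue of this step, namely roughly $\frac{n^3}{6}H^3$ independent linear conditions cutting down the base locus; nothing of that strength follows from Riemann--Roch on $\Sigma$ alone.
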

 The proof of this lemma is based on the following arguments, none involving the smoothness of $Y$ away from
 $\Sigma$.

 The first argument is Proposition 1.6 in \cite{voisinintjmath}, which concerns degenerations of $\Sigma$
 to a surface $\Sigma_0$ with nodes $p_1,\ldots , p_N$: that is we have a smooth family of surfaces
 $\mathcal{S}\rightarrow \Delta$, $\mathcal{S}\subset \Delta\times Y$, with  central fiber isomorphic to $\Sigma_0$.
 \begin{lemma} \label{lelimit}The limiting linear system $Q_{\Sigma_0} $ of quadrics on
 $V_{\Sigma_0} $ contains the quadrics $q_i$ defined (up to a coefficient depending on the trivialisation of $\mathcal{O}_{p_i}(2nH)$) by
 $$q_i(\eta)=\eta^2(p_i).$$

 \end{lemma}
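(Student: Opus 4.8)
The plan is to reduce Lemma~\ref{lelimit} to a computation that is local at each node, carried out through the residue description of the infinitesimal variation of Hodge structure; this is in substance Proposition~1.6 of \cite{voisinintjmath}, whose structure I would follow. Over the punctured disc the spaces $V_{\mathcal{S}_t}=H^0(\mathcal{S}_t,K_{\mathcal{S}_t})\cong H^0(\mathcal{S}_t,\mathcal{O}_{\mathcal{S}_t}(nH))$ form a vector bundle which, using $H^1(Y,\mathcal{O}_Y)=0$ and (for $n$ large) $H^1(Y,\mathcal{O}_Y(nH))=0$, extends across $t=0$ with fibre $V_{\Sigma_0}=H^0(\Sigma_0,\mathcal{O}_{\Sigma_0}(nH))$; the subspaces $Q_{\mathcal{S}_t}\subset S^2V_{\mathcal{S}_t}^{*}$, of constant dimension for generic $t$, then admit a flat limit, which is by definition $Q_{\Sigma_0}$. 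So it is enough to produce, for each node $p_i$, a holomorphic family $t\mapsto\lambda_t\in H^1(\mathcal{S}_t,\Omega_{\mathcal{S}_t})$ and a nonzero rescaling $c(t)$ such that $c(t)\,q_{\lambda_t}$ converges as $t\to 0$ to a nonzero multiple of $q_i$: since $q_{\lambda_t}\in Q_{\mathcal{S}_t}$, that limit lies in $Q_{\Sigma_0}$.

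To build $\lambda_t$ I would use the residue calculus recalled above. Fix a trivialisation $\omega_Y$ of $K_Y$ and a section $F_t\in H^0(Y,\mathcal{O}(nH))$ cutting out $\mathcal{S}_t$. For $\sigma\in H^0(\mathcal{S}_t,K_{\mathcal{S}_t}(nH))=H^0(\mathcal{S}_t,\mathcal{O}(2nH))$ with a lift $B\in H^0(Y,\mathcal{O}(2nH))$ one has, up to an irrelevant universal constant, $\delta'_t(\sigma)=\mathrm{Res}_{\mathcal{S}_t}\bigl(B\,\omega_Y/F_t^{2}\bigr)\in H^1(\mathcal{S}_t,\Omega_{\mathcal{S}_t})$. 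I would pick $B=B_i\in H^0(Y,\mathcal{O}(2nH))$ (possible for $n\gg0$) vanishing to high order at every $p_j$ with $j\neq i$ and normalised by $B_i(p_i)=1$, and set $\lambda_t:=\mathrm{Res}_{\mathcal{S}_t}(B_i\omega_Y/F_t^{2})$. Lifting $\eta,u$ to $H^0(Y,\mathcal{O}(nH))$ and letting $C\in H^0(Y,\mathcal{O}(2nH))$ be their product (so $C(p_i)=\eta(p_i)u(p_i)$ with $\eta,u$ evaluated on $\Sigma_0$), formula~(\ref{newref14sep}) of Lemma~\ref{le13septembre} gives
$$q_{\lambda_t}(\eta,u)=\langle\lambda_t,\delta'_t(\eta u)\rangle=\Bigl\langle\,\mathrm{Res}_{\mathcal{S}_t}\tfrac{B_i\,\omega_Y}{F_t^{2}},\ \mathrm{Res}_{\mathcal{S}_t}\tfrac{C\,\omega_Y}{F_t^{2}}\,\Bigr\rangle,$$
the cup product on $H^{2}(\mathcal{S}_t,\mathbb{C})$.

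The remaining step, computing the limit of this cup product, is local at the nodes. Assuming (as one may, generically) that the family is a transverse smoothing of each node, one can choose coordinates near $p_i$ so that $F_t=x^2+y^2+z^2-t$ and $\omega_Y$ is a unit times $dx\wedge dy\wedge dz$. By the Carlson--Griffiths expression for the cup product of residue classes (cf.\ \cite[II]{voisinbook}, \cite{green}), $q_{\lambda_t}(\eta,u)$ equals a global Grothendieck residue attached to the Jacobian ideal $(\partial F_t/\partial x,\partial F_t/\partial y,\partial F_t/\partial z)$; as $t\to0$ this ideal cuts out precisely the reduced set of nodes, whose Milnor algebra at each point is $\mathbb{C}$ (ordinary double point), so the residue concentrates at the nodes, and since $B_i$ vanishes at $p_j$ for $j\neq i$ only the term at $p_i$ survives. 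Up to the nonzero factors coming from $\det(\mathrm{Hess}\,F_0)(p_i)$ and from $\omega_Y$ that term is $B_i(p_i)\,C(p_i)=\eta(p_i)u(p_i)$. Tracking the power of $t$ created by this localisation — equivalently, rescaling by $t$, as dictated by the $2{:}1$ base change that makes the (order $2$) Picard--Lefschetz monodromy around the node trivial — one gets $\lim_{t\to0}t\,q_{\lambda_t}(\eta,u)=c_i\,\eta(p_i)u(p_i)$ with $c_i\neq0$; that is, the limit quadric is $c_i\,q_i$, and, since $t\,q_{\lambda_t}\in Q_{\mathcal{S}_t}$, we conclude $q_i\in Q_{\Sigma_0}$.

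The hard part is exactly this last limit: one must check that the residue pairing genuinely concentrates at the nodes, without cross-interference between distinct nodes or contributions from the smooth part of $\Sigma_0$; pin down the correct rescaling, since the Hodge bundle $\mathcal{H}^{1,1}$ does not extend trivially across $t=0$; and verify that $c_i\neq0$, which is precisely the assertion that the Milnor algebra of an ordinary double point is one-dimensional and perfectly self-paired. All of this is done in \cite[Prop.~1.6]{voisinintjmath}.
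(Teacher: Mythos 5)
The paper does not actually prove this lemma: it is quoted verbatim as Proposition~1.6 of \cite{voisinintjmath}, so the only thing to compare your sketch with is the argument of that reference. Your general framework is correct --- defining $Q_{\Sigma_0}$ as the flat limit of the subspaces $Q_{\mathcal{S}_t}\subset S^2V^*$ and reducing to the production of a family $\lambda_t$ whose (rescaled) quadrics converge to $q_i$ is exactly the right setup. The problem is with the family you choose.

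Your class $\lambda_t=\delta'(B_i|_{\mathcal{S}_t})=\mathrm{Res}(B_i\omega_Y/F_t^2)$ gives, by (\ref{autre13sept}) and (\ref{montrelasymetrie}), the quadric $q_{\lambda_t}(\eta,u)=-\langle\eta,\delta_t(B_iu)\rangle$. Every ingredient here --- the coboundary maps $\delta'$, $\delta$ of the conormal sequences and the Serre pairing between $H^0(\omega)$ and $H^2(\mathcal{O})$ --- comes from sheaf sequences on the total space $\mathcal{S}$ that remain exact on the nodal fibre (recall $\Sigma_0$ is a reduced Cartier divisor contained in $Y_{reg}$, and $h^0(\omega_{\mathcal{S}_t})$, $h^2(\mathcal{O}_{\mathcal{S}_t})$ do not jump at $t=0$). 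Hence $q_{\lambda_t}(\eta,u)$ extends holomorphically through $t=0$: its limit is the finite quadric $(\eta,u)\mapsto-\langle\eta,\delta_0(B_iu)\rangle$ computed on $\Sigma_0$, which is not proportional to evaluation at $p_i$, and your rescaled limit $\lim_{t\to0}t\,q_{\lambda_t}$ is therefore the \emph{zero} quadric rather than $c_iq_i$. (Relatedly, ``the Grothendieck residue concentrates at the zeros of the Jacobian ideal'' does not parse for $t\neq0$, where the Jacobian ideal of $F_t$ has no zeros on $Y$; the localisation you need is a statement about a degenerating global duality, not a sum of local residues.) The quadrics $q_i$ live in the directions of $Q_{\mathcal{S}_t}$ that \emph{collapse} as $t\to0$, and that is where the argument behind \cite[Prop.~1.6]{voisinintjmath} finds them: one takes for $\lambda_t$ the $(1,1)$-component of the vanishing cycle $\delta_i\in H^2(\mathcal{S}_t,\mathbb{Z})$ of the node $p_i$. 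Because $\delta_i$ is a flat (integral) class, $q_{\delta_i}(\eta,u)=\partial_u\int_{\delta_i}\tilde\eta$, and the classical asymptotics of the period of $\mathrm{Res}(G\,\omega_Y/F_t)$ over the vanishing sphere of an ordinary double point give a leading term proportional to $\eta(p_i)u(p_i)$ times an explicit power of $t$, with nonzero coefficient governed by the Hessian of $F_0$ at $p_i$. This localises the computation at $p_i$ automatically (the vanishing sphere sits in an arbitrarily small ball around $p_i$, so distinct nodes cannot interfere) and makes both the rescaling exponent and the nonvanishing of $c_i$ transparent --- precisely the two points your sketch leaves open.
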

The second argument is then provided by the construction of surfaces $\Sigma_0$
with many nodes, imposing many conditions on
$H^0(\Sigma_0,\mathcal{O}_\Sigma(nH))$. Such surfaces $\Sigma_0$ are obtained
in \cite{voisinuniruled}  as discriminant loci for symmetric matrices of size
$(n,n)$ with coefficients in $H^0(Y,H)$ (where $H$ is supposed to be very ample on $Y$).
Using results of Barth in \cite{barth},  such a generically chosen discriminant
surface has only ordinary double points, whose number $N$
 is a cubic polynomial in $n$, (precisely equal to $\begin{pmatrix}{n+1}\\{3}\end{pmatrix}H^3$), hence with
leading term $\frac{n^3}{6}H^3$. Notice that, by Riemann-Roch,
 $V_{\Sigma_0}:=H^0(\Sigma_0,\mathcal{O}_{\Sigma_0}(nH))$ has dimension given by a cubic polynomial in
 $n$ with leading term also equal to
 $ \frac{n^3}{6} H^3$.  Finally, we prove
 in \cite{voisinuniruled}, by  considering a natural resolution
  of the ideal sheaf of subset $W=\{p_1,\ldots,p_N\}\subset \Sigma_0$ of nodes of $\Sigma_0$,
  that $W$ imposes independent conditions to
 the linear system $\mid \mathcal{O}_Y((n+2)H) \mid$, and it follows easily that for some constant
 $c>0$
 \begin{eqnarray}\label{ineg13sept}{\rm dim}\,\,H^0(Y,\mathcal{I}_W(nH))\leq cn^2.
 \end{eqnarray}
The proofs given there do not involve the smoothness of $Y$, as long as $\Sigma_0$ does not meet
the singular locus of $Y$.

 By Lemma \ref{lelimit}, the base locus
 of the limiting system of quadrics
 $Q_{\Sigma_0}$ on $V_{\Sigma_0}$ is contained in $\mathbb{P}(H^0(\Sigma_0,\mathcal{I}_W(nH)))$
 so that (\ref{ineg13sept})
  completes the proof of Lemma \ref{lebaselocus}.

 We want next to study the sets $W_{\Sigma,k}$ introduced in Lemma \ref{lecriterion}, for generic
 $\Sigma\in|nH|$. By
 Lemma \ref{lebaselocus}, we can restrict to the range $k\leq cn^2$.
  The  second step in the proof is thus the study of the locus
  $ W_{c,\Sigma}\subset\mathbb{P}(V_\sigma)$ defined as
  $$ W_{cn^2,\Sigma}:=\{v\in \mathbb{P}(V_\sigma),\,i_v:Q_\Sigma\rightarrow V^*\,\,{\rm has \,\,rank}\,\,\leq cn^2\}$$
  and we need to prove the following lemma (cf. Lemma 12 in \cite{voisinuniruled}):
  \begin{lemma}\label{rangcn2} There exists a constant $A$ independent of $n$, such that for generic
  $\Sigma\in |nH|$, one has ${\rm dim}\, W_{cn^2,\Sigma}\leq A$.
  \end{lemma}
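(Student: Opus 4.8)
The plan is to reinterpret the rank condition defining $W_{cn^2,\Sigma}$ through the map $\overline\nabla$, to identify the relevant kernel with a subspace of $H^0(\Sigma,\mathcal O_\Sigma(2nH))$ whose dimension grows cubically in $n$, and then to deduce the bound from a non-degeneracy statement for generic $\Sigma$, checked by specialisation.

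First I would rewrite $\operatorname{rank} i_v$. Put $V_\Sigma=H^0(\Sigma,\mathcal O_\Sigma(nH))=H^0(\Sigma,K_\Sigma)$. For $v,\eta\in V_\Sigma$, formula (\ref{newref14sep}) gives $i_v(q_\lambda)(\eta)=q_\lambda(\eta,v)=\langle\lambda,\overline\nabla(\eta)(v)\rangle=\langle\lambda,\delta'(\eta v)\rangle$, so using the non-degeneracy of the intersection pairing on $H^1(\Sigma,\Omega_\Sigma)$ and the symmetry (\ref{montrelasymetrie}) one sees that $\operatorname{rank} i_v$ equals the rank of $\overline\nabla(v)\colon V_\Sigma\to H^1(\Sigma,\Omega_\Sigma)$, $w\mapsto\delta'(vw)$. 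Since multiplication by $v\neq0$ is injective,
$$\operatorname{rank} i_v=\dim V_\Sigma-\dim\bigl(v\cdot V_\Sigma\cap\ker\delta'\bigr),$$
and hence $[v]\in W_{cn^2,\Sigma}$ precisely when $v\cdot V_\Sigma$ is contained, up to codimension $\le cn^2$, in the subspace $\ker\delta'\subset H^0(\Sigma,K_\Sigma(nH))=H^0(\Sigma,\mathcal O_\Sigma(2nH))$.

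Next I would describe $\ker\delta'$. By (\ref{ex38SEP}) and the vanishing $h^0(\Sigma,\Omega_\Sigma)=h^1(\Sigma,\mathcal O_\Sigma)=0$ (a consequence of the hypotheses on $Y$), the natural map $H^0(\Sigma,\Omega^2_{Y\mid\Sigma}(nH))\to H^0(\Sigma,K_\Sigma(nH))$ is injective with image $\ker\delta'$; restricting $0\to\Omega^2_Y\to\Omega^2_Y(nH)\to\Omega^2_{Y\mid\Sigma}(nH)\to0$ to $Y$ and invoking Serre vanishing for $n\gg0$, just as in the proof of Lemma \ref{surjH2}, identifies $\ker\delta'$, up to a subspace of dimension bounded independently of $n$, with the image of the restriction $H^0(Y,\Omega^2_Y(nH))\to H^0(\Sigma,K_\Sigma(nH))$. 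Thus $\dim\ker\delta'$ is a cubic polynomial in $n$ with leading term $\tfrac12H^3n^3$, and (again by Serre vanishing, the cokernel of $\delta'$ injecting into $H^1(\Sigma,\Omega^2_{Y\mid\Sigma}(nH))\cong H^2(Y,\Omega^2_Y)$) the map $\delta'$ is surjective onto $H^1(\Sigma,\Omega_\Sigma)$ modulo a subspace bounded in $n$. The condition of the previous step therefore becomes $\operatorname{rank}\overline\nabla(v)\le cn^2$, a smallness-of-rank condition between two spaces — $V_\Sigma$, of dimension $\sim\tfrac16H^3n^3$, and (a subspace of corank $O(1)$ in) $H^1(\Sigma,\Omega_\Sigma)$, of dimension $\sim\tfrac23H^3n^3\sim h^{1,1}(\Sigma)$ — both growing cubically in $n$.

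Finally I would bound $\{[v]\in\mathbb P(V_\Sigma):\operatorname{rank}\overline\nabla(v)\le cn^2\}$. The maps $\overline\nabla(v)$ form a linear family parametrised by $[v]$, coming from the symmetric bilinear map $(v,w)\mapsto\delta'(vw)$, which is surjective onto $H^1(\Sigma,\Omega_\Sigma)$ modulo $O(1)$; for a \emph{generic} such family the rank $\le cn^2$ locus is empty, since the determinantal locus of maps of rank $\le cn^2$ between spaces of dimensions $\sim\tfrac16H^3n^3$ and $\sim\tfrac23H^3n^3$ has codimension $\sim\tfrac19H^6n^6$, vastly more than $\dim\mathbb P(V_\Sigma)\sim\tfrac16H^3n^3$. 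The real content of the lemma is that the family arising from a generic $\Sigma\in|nH|$ is non-degenerate enough for its rank $\le cn^2$ locus to have dimension $\le A$ with $A$ independent of $n$. Since $\{\Sigma\in|nH|:\dim W_{cn^2,\Sigma}>A\}$ is closed, it is enough to verify this for one surface, and I would take, as for Lemma \ref{lebaselocus}, the discriminant surface $\Sigma_0$ of a generic symmetric matrix of sections of $H$, where Barth's description makes the surface, its nodes, and the restrictions of the relevant linear systems explicit: the node quadrics of Lemma \ref{lelimit} force $v$ to vanish on all but $O(n^2)$ of the $\sim\tfrac16H^3n^3$ nodes, giving an a priori $O(n^2)$-dimensional bound, and one then uses the part of $\ker\delta'$ (equivalently of the limiting quadric system) not detected by the nodes, together with the independence estimate (\ref{ineg13sept}), to reduce this to constant dimension. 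This last step — passing from the $O(n^2)$ bound of the node quadrics to a bound independent of $n$, which forces one to exploit the whole of $\ker\delta'$ and not just its ``node part'' — is the main obstacle; the reductions leading to it are routine applications of Lemma \ref{le13septembre}, Serre vanishing, and linear algebra.
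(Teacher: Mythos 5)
Your linear-algebra reformulation is correct: ${\rm rank}\, i_v$ does equal the rank of $\eta\mapsto\delta'(v\eta)$, so $[v]\in W_{cn^2,\Sigma}$ iff $v\cdot V_\Sigma$ lies in $\ker\delta'$ up to codimension $cn^2$, and your description of $\ker\delta'$ as (essentially) the restriction of $H^0(Y,\Omega^2_Y(nH))$ is the right picture. But the proof has a genuine gap exactly where you locate ``the main obstacle'', and that obstacle is the entire content of the lemma. On the nodal discriminant surface $\Sigma_0$, the quadrics supplied by Lemma \ref{lelimit} are the rank-one forms $\eta\mapsto\eta(p_i)^2$, so the condition ${\rm rank}\, i_v\le cn^2$ only forces $v$ to vanish at all but $cn^2$ of the nodes; combined with (\ref{ineg13sept}) this gives $\dim W_{cn^2,\Sigma_0}=O(n^2)$, which is no improvement on the base-locus bound of Lemma \ref{lebaselocus} and is useless for the criterion of Lemma \ref{lecriterion} in the range $k\le b\sim cn^2$. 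The step you defer --- using ``the part of $\ker\delta'$ not detected by the nodes'' to cut $O(n^2)$ down to a constant --- is asserted, not argued, and there is no reason to expect Barth's construction to deliver it: the quadrics in $Q_{\Sigma_0}$ beyond the node quadrics are not under any explicit control there. Your heuristic that a generic linear family of maps of these dimensions has empty low-rank locus also does not help, since the family at hand is a very special (multiplication) family and the whole point is to certify its non-degeneracy on at least one concrete $\Sigma$.

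The paper (quoting Lemma 12 of \cite{voisinuniruled}) uses a different specialisation for precisely this reason: one takes a generic finite projection $p:Y\rightarrow\mathbb{P}^3$ with $p^*\mathcal{O}_{\mathbb{P}^3}(1)=H$ and sets $\Sigma=p^{-1}(S_n)$, where $S_n$ is the degree-$n$ Fermat surface. Via $p_*$, the infinitesimal variation of Hodge structure of $\Sigma$ is compared with a direct sum of twisted infinitesimal variations of Hodge structure of $S_n$, for which Griffiths' residue description gives explicit polynomial (Jacobian-ring) models; it is this explicit algebraic control, available for the Fermat equation and its twists, that yields a rank lower bound for $i_v$ for all $v$ outside a locus of dimension bounded independently of $n$. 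That ingredient --- an explicit model on a special surface strong enough to beat $O(n^2)$ --- is what is missing from your proposal, and the nodal degeneration you reuse from Lemma \ref{lebaselocus} is not a substitute for it.
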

  The proof of this statement given in {\it loc. cit.}  does not involve the smoothness
  of the ambient space $Y$. It is obtained by studying the case where
  $\Sigma$ is of Fermat type. Namely, one chooses a generic projection
  $p:Y\rightarrow \mathbb{P}^3$ such that $p^*\mathcal{O}_{\mathbb{P}^3}(1)=H$, and
 one takes for $\Sigma$ the inverse image of the Fermat surface $S_n$ of degree $n$ in
  $\mathbb{P}^3$. One is led to compare the infinitesimal variation of Hodge structure for
  $\Sigma$ with a sum of twisted infinitesimal variations of Hodge structure for $S_n$.
  This comparison involves only the map $p:\Sigma\rightarrow S_n$ and the restriction of
  $\Omega_Y$ to $\Sigma$, and not the geometry of $Y$ away from
  $\Sigma$.

  Using Lemma  \ref{rangcn2}, we conclude that all the sets
   $Z_{k,\Sigma}$ introduced in Lemma
  \ref{lecriterion} have dimension
  $\leq A$, and  in order to check the criterion given in Lemma
  \ref{lecriterion}, it suffices now to study the following sets $W_{A,\Sigma}$:
  $$W_{A,\Sigma}:=\{v\in \mathbb{P}(V_\sigma),\,i_v:Q_\Sigma\rightarrow V^*\,\,{\rm has \,\,rank}\,\,\leq cn^2\}.$$
  This is the third step and last step of the proof:
  \begin{lemma}\label{lelast} Let $H$ be normally generated on $Y$ and let $A$ be
  a given (large) constant. Then for $n>A$, and for
  smooth $\Sigma\in |nH|$, the
  set $W_{A,\Sigma}$ is empty.
  \end{lemma}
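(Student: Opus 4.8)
The plan is to show that if $v \in \mathbb{P}(V_\Sigma)$ lies in the base locus $B$ of $Q_\Sigma$ and the contraction map $i_v : Q_\Sigma \to V_\Sigma^*$ has rank $\leq A < n$, then $v$ must in fact be a very degenerate point of $\Sigma$ — so degenerate that its existence is incompatible with $\Sigma$ being smooth. Concretely, a point $v \in \mathbb{P}(V_\Sigma)$ corresponds (after choosing a trivialisation) to a section $u_v \in H^0(\Sigma,\mathcal O_\Sigma(nH))$, and by Lemma \ref{le13septembre}, formula (\ref{montrelasymetrie}), the quadric $q_\lambda$ evaluated at $(u_v,\eta)$ is $\langle \lambda, \overline{\nabla}(\eta)(u_v)\rangle = -\langle\overline{\nabla}(\lambda)(u_v),\eta\rangle$, and by (\ref{newref14sep}), $\overline{\nabla}(\eta)(u_v) = \delta'(\eta u_v)$. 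So the condition that $v$ lies in $B$, i.e. $q_\lambda(u_v,u_v)=0$ for all $\lambda$, combined with the surjectivity of $\delta$ from Lemma \ref{surjH2} (which holds under hypothesis (\ref{hypoth}), guaranteed here by Lemma \ref{annulationdeforme} for $n$ sufficiently divisible), translates into the statement that $\delta_1(\eta u_v) = 0$ in $H^1(\Sigma,\Omega_\Sigma(nH))$ — hence, via the exact sequence (\ref{ex1}), that the image of $\eta u_v$ under the connecting map vanishes, so $u_v$ multiplied by $H^0(\Sigma,K_\Sigma)$ lies in the image of $H^0(\Sigma,\Omega^2_{Y\mid\Sigma}(2nH)) \to H^0(\Sigma,K_\Sigma(2nH))$.

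First I would make the rank-$\leq A$ hypothesis bite: $i_v$ has rank $\leq A$ means that the space of $\lambda \in H^1(\Sigma,\Omega_\Sigma)$ for which $q_\lambda(u_v,-)$ is a nonzero linear form on $V_\Sigma$ spans, together with its image, at most an $A$-dimensional subspace of $V_\Sigma^* \cong H^0(\Sigma,K_\Sigma)^*$. Using the identifications above, this bounds the dimension of the image of the multiplication-then-$\delta$ map $\eta \mapsto \delta(\eta u_v)$, $H^0(\Sigma,K_\Sigma) \to H^2(\Sigma,\mathcal O_\Sigma)$, by $A$. But $\delta$ is surjective and, by the symmetry of $\overline{\nabla}(\lambda)$ noted in the discussion after Proposition \ref{propIVHS} (duality via a trivialisation of $K_{\widetilde Y\mid\Sigma}$), the relevant bilinear form is symmetric; standard Koszul/Macaulay-type reasoning — essentially the argument of Lemma 12 in \cite{voisinuniruled}, which I would invoke — then forces $u_v$ to vanish to high order, or more precisely forces the ideal generated by $u_v$ in the relevant graded piece to be small, contradicting $n > A$. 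The mechanism is that if $u_v$ were a nonzero section, multiplication by $u_v$ from $H^0(\Sigma,\mathcal O_\Sigma(nH))$ into $H^0(\Sigma,\mathcal O_\Sigma(2nH))$ is injective with large rank (since $\Sigma$ is a surface and $nH$ is very ample), and composing with the surjection $\delta$ cannot drop the rank below something growing with $n$; an $A$ independent of $n$ is then impossible for $n$ large. When $u_v = 0$ the point $v$ is not in $\mathbb P(V_\Sigma)$ at all, so $W_{A,\Sigma}$ is empty.

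The main obstacle, and the place where I would spend the most care, is verifying that the chain of identifications connecting the abstract rank condition on $i_v$ to a concrete statement about multiplication by $u_v$ followed by $\delta$ goes through verbatim in the singular setting — i.e. that nothing in the argument of \cite{voisinuniruled} used smoothness of $Y$ away from $\Sigma$, only the behaviour of $\Omega_Y$ restricted to $\Sigma$ (which lives in $Y_{reg}$) and the global vanishing (\ref{hypoth}). This is exactly the point emphasised in the surrounding text: all the maps $\delta_1, \delta_2, \delta, \delta'$ and the formulas of Lemma \ref{le13septembre} are defined using only the normal bundle sequence of $\Sigma$ in $Y_{reg}$ and twists thereof, and the one genuinely global input — surjectivity of $\delta$ — has already been established in Lemma \ref{surjH2} under precisely our hypotheses. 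So the proof should consist of: (1) restate the rank-$\leq A$ condition as a bound on the rank of $\eta \mapsto \delta(\eta u_v)$; (2) use surjectivity of $\delta$ and the very-ampleness of $nH$ on the surface $\Sigma$ to show this rank grows with $n$ once $u_v \neq 0$; (3) conclude emptiness of $W_{A,\Sigma}$ for $n > A$, citing Lemma 12 of \cite{voisinuniruled} for the computation in step (2).
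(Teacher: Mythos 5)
There is a genuine gap. Your overall skeleton is right --- translate the condition $\mathrm{rank}\, i_v \leq A$ into the statement that a certain ``multiply by $v$ then apply $\delta$'' map has small rank, and play this off against the surjectivity of $\delta$ from Lemma \ref{surjH2} --- but the mechanism you propose for deriving the contradiction does not work and is not the one the paper uses. Your key assertion is that ``composing with the surjection $\delta$ cannot drop the rank below something growing with $n$''. This is false as a general principle (a surjection can kill any given subspace), and in fact the whole point of the lemma is to rule out exactly the scenario in which the composite $m \mapsto \delta(vm)$ has rank $\leq A$; asserting that the rank must grow is begging the question. The paper's actual argument has two ingredients you do not supply. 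First, the symmetry relation (\ref{ultime}), $\langle v,\delta(v'm)\rangle=\langle v',\delta(vm)\rangle$, applied to the codimension-$\leq A$ kernel $M=\ker\bigl(m\mapsto\delta(vm)\bigr)\subset H^0(\Sigma,\mathcal{O}_\Sigma(2nH))$: it shows that $\delta\bigl(H^0(\Sigma,\mathcal{O}_\Sigma(nH))\cdot M\bigr)$ is \emph{orthogonal to $v$} under Serre duality, hence is a proper subspace of $H^2(\Sigma,\mathcal{O}_\Sigma)$. Second, Green's Proposition \ref{green} (the Macaulay--Gotzmann-type statement on multiplication of linear series of bounded codimension), applied after checking that $M$ is base-point free --- which uses that $M$ contains the image of $H^0(\Sigma,\Omega^2_{Y\mid\Sigma}(nH))$ and that this sheaf is globally generated for $n$ large. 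This yields $H^0(\Sigma,\mathcal{O}_\Sigma(nH))\cdot M=H^0(\Sigma,\mathcal{O}_\Sigma(3nH))$, so surjectivity of $\delta$ forces $\delta\bigl(H^0(nH)\cdot M\bigr)=H^2(\Sigma,\mathcal{O}_\Sigma)$, contradicting the orthogonality. Without both steps the small-rank hypothesis never produces a contradiction.

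Two further inaccuracies: your first paragraph misuses the base-locus condition --- $q_\lambda(u_v,u_v)=0$ for all $\lambda$ only gives $\delta'(u_v^2)=0$, not $\delta_1(\eta u_v)=0$ for all $\eta$, and in any case the paper's proof of this lemma uses only the rank condition, not membership in $B$. And the result you cite, Lemma 12 of \cite{voisinuniruled}, is the Fermat-surface bound $\dim W_{cn^2,\Sigma}\leq A$ (Lemma \ref{rangcn2} here), which is a different step of the proof of Proposition \ref{propIVHS}; it is not the engine of Lemma \ref{lelast}. (Also, watch the degrees: for $\eta\in H^0(\Sigma,K_\Sigma)$ one has $\eta u_v\in H^0(\Sigma,K_\Sigma(nH))$, which is in the domain of $\delta'$, not of $\delta$.)
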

  \begin{proof} The proof is by contradiction. Let $v\in W_{A,\Sigma}$ and let
  $M\subset H^0(\Sigma,\mathcal{O}_\Sigma(2nH))$ be the kernel
  of the composite map:
  $$H^0(\Sigma,\mathcal{O}_\Sigma(2nH))\stackrel{\delta'}{\rightarrow} H^1(\Sigma,\Omega_\Sigma)
  \stackrel{i_v\circ q}{\rightarrow} H^0(\Sigma,K_\Sigma)^*,$$
  where the map
  $\delta':H^0(\Sigma,\mathcal{O}_\Sigma(2nH)){\rightarrow} H^1(\Sigma,\Omega_\Sigma)$ is defined in (\ref{ex38SEP})
  and $q$ is defined in (\ref{definq}). As the rank of $i_v\circ q$ is $\leq A$, one has   ${\rm codim}\,M\leq A$.
 Identifying $H^0(\Sigma,K_\Sigma)^*$ with $H^2(\Sigma,\mathcal{O}_\Sigma)$, and using formula (\ref{autre13sept}) of
  Lemma \ref{le13septembre}, one sees on the other hand
 that $M$ is the kernel of the composite map:
 $$H^0(\Sigma,\mathcal{O}_\Sigma(2nH))\stackrel{v\cdot}{\rightarrow}
 H^0(\Sigma,\mathcal{O}_\Sigma(3nH))\stackrel{\delta}{\rightarrow} H^2(\Sigma,\mathcal{O}_\Sigma),$$
 where $v\cdot$ is multiplication by $v$.

 It  also follows from the combination
 of the three formulas given in Lemma \ref{le13septembre} that for $v,\,v'\in H^0(\Sigma,\mathcal{O}_\Sigma(nH))$,
 and for any $m\in H^0(\Sigma,\mathcal{O}_\Sigma(2nH))$, we have
 \begin{eqnarray}\label{ultime}\langle v,\delta(v'm)\rangle=\langle v',\delta(vm)\rangle,
 \end{eqnarray}
 where $\langle\,,\,\rangle$ is the Serre pairing between
 $H^0(\Sigma,\mathcal{O}_\Sigma(nH))=H^0(\Sigma,K_\Sigma)$ and $H^2(\Sigma,\mathcal{O}_\Sigma)$.
 Taking $m\in M$, so that $\delta(mv)=0$, we deduce from (\ref{ultime}) that
 $\delta(H^0(\Sigma,\mathcal{O}_\Sigma(nH))\cdot M)$ is orthogonal to $v$ with respect to
 Serre duality.
 But this provides a contradiction by the following argument:

 First of all, $M$ has no base point. Indeed, from the exact sequence
 (\ref{ex38SEP}), we see that
  $M$ contains the image of $H^0(\Sigma,\Omega^2_{Y\mid\Sigma}(nH))$ in $H^0(\Sigma,\mathcal{O}_\Sigma(2nH))$. As $A$ is large and $2n>A$,
 $\Omega^2_{Y\mid\Sigma}(nH))$ is globally generated and its image in $H^0(\Sigma,\mathcal{O}_\Sigma(2nH))$
 thus generates $\mathcal{O}_\Sigma(2nH)$ at any point.

 Thus we can apply the following result due to Green \cite{green1} to the linear system $M$ on $\Sigma$.
\begin{proposition} \label{green}
Let $Z$ be any projective manifold and $H$ be a very ample  normally generated line
bundle on $Z$. Let $A$ be a given constant, and for $m>A$,
let $K \subset H^0(Z,\mathcal{O}_Z(mH))$ be a subspace of
codimension $\leq A$. Then
$$H^0(Z,\mathcal{O}_Z(H))\cdot K\subset H^0(Z,\mathcal{O}_Z((m+1)H))$$ has codimension $\leq A$,
with strict inequality if $K$ has no base-point and $K\not=H^0(Z,\mathcal{O}_Z(H))$.
\end{proposition}
As $M$ has no base point and $2n>A$, we conclude that the codimensions of
the subspaces $H^0(\Sigma,\mathcal{O}_\Sigma(kH))\cdot M\subset H^0(\Sigma,\mathcal{O}_\Sigma((2n+k)H))$
are strictly decreasing until they fill-in $H^0(\Sigma,\mathcal{O}_\Sigma((2n+k)H))$. As ${\rm codim}\,M\leq A<n$, we conclude that $H^0(\Sigma,\mathcal{O}_\Sigma(nH))\cdot M= H^0(\Sigma,\mathcal{O}_\Sigma(3nH))$.

But then we use Lemma \ref{surjH2} saying that $\delta$ is surjective. It thus follows that
$$\delta(H^0(\Sigma,\mathcal{O}_\Sigma(nH))\cdot M)=H^2(\Sigma,\mathcal{O}_\Sigma),$$
which
contradicts the fact that $\delta(H^0(\Sigma,\mathcal{O}_\Sigma(nH))\cdot M)$ is orthogonal to $v$ with respect to
 Serre duality.
\end{proof}


\begin{thebibliography}{1}
\bibitem{ah} {\sc  Atiyah, M.F. and  Hirzebruch, F.} Analytic cycles on complex manifolds,
{\it Topology} 1,
25-45 (1962).
\bibitem{barth} W. Barth. Counting singularities of quadratic forms on vector bundles, in
{\it Vector bundles and Differential Equations}, Proceedings, Nice 1979, A. Hirschowitz Ed,
Progress in Mathematics 7, Birkh\"auser (1980).
\bibitem{Dem00}{\sc Demailly, Jean-Pierre},
 Multiplier ideal sheaves and analytic methods in algebraic  geometry,
{\it School on {V}anishing {T}heorems and {E}ffective {R}esults in
              {A}lgebraic {G}eometry ({T}rieste, 2000)},
(2001).
\bibitem{dps} J.-P. Demailly, T. Peternell, M. Schneider. Compact K\"ahler manifolds
with Hermitian semipositive
anticanonical bundle, Compositio. Math 101, 217–224 (1996).
\bibitem{Flo10} Floris, Enrica. Fundamental divisors on Fano varieties of index {$n-3$},
arXiv: 1009.0812, 2010.
\bibitem{ghs} T. Graber, J. Harris and J. Starr, Families of rationally connected varieties.
J. Amer. Math. Soc., 16(1), 57-67  (2003).
\bibitem{GKKP} Daniel Greb, Stefan Kebekus, Sandor J. Kov\'{a}cs, Thomas Peternell.
     Differential Forms on Log Canonical Spaces, arXiv:1003.2913.
     \bibitem{green1} M. Green. Restriction of linear series to hyperplanes and some results of Macaulay and Gotzmann,
in
{\it Algebraic curves and projective geometry} (Trento 1988), Lecture Notes in Math. 1389, Springer, Berlin 1989.
\bibitem{green} M. Green. The period map for hypersurface sections of high degree of an arbitrary variety,
 Compositio Math.  55  (1985),  no. 2, 135-156.
\bibitem{Kaw00}
Yujiro Kawamata.
\newblock On effective non-vanishing and base-point-freeness.
\newblock {\em Asian J. Math.}, 4(1):173--181, 2000.
\newblock Kodaira's issue.

\bibitem{Kol95}
J\'{a}nos Koll\'{a}r.
\newblock Singularities of pairs.
\newblock In {\em Algebraic geometry---{S}anta {C}ruz 1995}, volume~62 of {\em
  Proc. Sympos. Pure Math.}, pages 221--287. Amer. Math. Soc., Providence, RI,
  1997.
\bibitem{kollar} {\sc Koll\'ar, J.} Lemma  p. 134
 in {\it Classification of irregular
 varieties}, edited by E. Ballico, F. Catanese, C. Ciliberto,
 Lecture Notes in Math. 1515, Springer (1990).
 \bibitem{kmm} J. Koll\'{a}r, Y. Miyaoka and  S. Mori,
 Rationally connected varieties,
Journal of Algebraic Geometry 1, 429-448, (1992).
 \bibitem{kollarbook} J\'{a}nos Koll\'{a}r.
\newblock {\em Rational curves on algebraic varieties}, volume~32 of {\em
  Ergebnisse der Mathematik und ihrer Grenzgebiete. 3. Folge. A Series of
  Modern Surveys in Mathematics [Results in Mathematics and Related Areas. 3rd
  Series. A Series of Modern Surveys in Mathematics]}.
\newblock Springer-Verlag, Berlin, 1996.

 \bibitem{kollarportugaliae} J. Koll\'ar. Holomorphic and pseudo-holomorphic curves on rationally connected varieties, Portugaliae Mathematica, Volume 67, Issue 2, (2010) 155–179.

 \bibitem{Mel} 
Massimiliano Mella.
\newblock Existence of good divisors on Mukai varieties. 
\newblock {\em J. of Algebraic Geometry}, 8, No.2, 197-206, 1999.

\bibitem{Miy87}
Yoichi Miyaoka.
\newblock The {C}hern classes and {K}odaira dimension of a minimal variety.
\newblock In {\em Algebraic geometry, {S}endai, 1985}, volume~10 of {\em Adv.
  Stud. Pure Math.}, pages 449--476. North-Holland, Amsterdam, 1987.

\bibitem{Pet08}
Thomas Peternell.
\newblock Varieties with generically nef tangent bundles.
\newblock {\em arXiv}, 0807.0982, 2008.

\bibitem{Rei83}
Miles Reid.
\newblock Projective morphisms according to {K}awamata.
\newblock 1983.

\bibitem{SDo74}
B.~Saint-Donat.
\newblock Projective models of {$K-3$} surfaces.
\newblock {\em Amer. J. Math.}, 96:602--639, 1974.

\bibitem{Sho79}
V.~V. {\v{S}}okurov.
\newblock Smoothness of a general anticanonical divisor on a {F}ano variety.
\newblock {\em Izv. Akad. Nauk SSSR Ser. Mat.}, 43(2):430--441, 1979.

\bibitem{sommese} A. Sommese, Submanifolds of abelian varieties, Math. Ann. 233 (1978),
229-256.
\bibitem{voisinsoule} C. Soul\'{e}, C. Voisin. Torsion cohomology classes and algebraic cycles on complex projective manifolds,  Adv. Math.  198  (2005),  no. 1, 107-127.
    \bibitem{starr} J. Starr. A pencil of Enriques surfaces of index one with no section,
    {arXiv:math/0602639}.
    \bibitem{voisinintjmath} C. Voisin. Densit\'{e} du lieu de Noether-Lefschetz pour les sections hyperplanes des vari\'{e}t\'{e}s de Calabi-Yau de dimension $3$, Internat. J. Math.  3  (1992),  no. 5, 699-715.
\bibitem{voisinbook} C. Voisin. Hodge theory and complex algebraic geometry  I and II,  Cambridge Studies in Advanced Mathematics  76, 77,   Cambridge University Press, Cambridge (2002, 2003).
     \bibitem{voisinuniruled} C. Voisin,  On integral Hodge classes on uniruled and Calabi-Yau
threefolds, in {\it Moduli Spaces and Arithmetic Geometry},
Advanced Studies in Pure Mathematics 45, pp. 43-73, (2006).
\bibitem{dQZ} Qi Zhang. On projective varieties with nef anticanonical divisors, Math. Ann. 332, 697-703 (2005).
\end{thebibliography}
\end{document}